% SIAM Article Template
\documentclass[hidelinks,onefignum,onetabnum]{siamart251216}

% Information that is shared between the article and the supplement
% (title and author information, macros, packages, etc.) goes into
% ex_shared.tex. If there is no supplement, this file can be included
% directly.

% SIAM Shared Information Template
% This is information that is shared between the main document and any
% supplement. If no supplement is required, then this information can
% be included directly in the main document.

% Packages and macros go here
\usepackage{colortbl}
\usepackage{lipsum, tcolorbox}
\usepackage{tikz}
\usetikzlibrary{matrix, arrows.meta, positioning}
\usepackage{float}
\usepackage{makecell}
\usepackage{pifont}
\usepackage{caption}
\usepackage{subcaption}
\usepackage{mdframed}
\usepackage[table]{xcolor}
\usepackage{amsfonts}
\usepackage{graphicx}
\usepackage{epstopdf}
\usepackage{amssymb}
\usepackage{array}
\usepackage{multirow}
\usepackage{amsopn}
\usepackage{algorithmic}
\newcommand{\e}{\mathbf{e}}
\newcommand{\bv}{\mathbf{v}}
\newcommand{\uu}{{\mathbf{u}}}
\newcommand{\bxi}{\boldsymbol{\xi}}

\ifpdf
  \DeclareGraphicsExtensions{.eps,.pdf,.png,.jpg}
\else
  \DeclareGraphicsExtensions{.eps}
\fi

% Used for creating new theorem and remark environments
\newsiamremark{remark}{Remark}
\newsiamremark{hypothesis}{Hypothesis}
\crefname{hypothesis}{Hypothesis}{Hypotheses}
\newsiamthm{claim}{Claim}
\newsiamremark{fact}{Fact}
\crefname{fact}{Fact}{Facts}

\def\argmin{\mathop{\rm argmin}}

\def\argmax{\mathop{\rm argmax}}

\newcommand{\pmat}[1]{\begin{pmatrix} #1 \end{pmatrix}}

\newmdenv[
  backgroundcolor=gray!20,
  linecolor=white,
  linewidth=0pt,
  innertopmargin=5pt,
  innerbottommargin=5pt,
  innerleftmargin=8pt,
  innerrightmargin=8pt,
]{custombox}

% Sets running headers as well as PDF title and authors
\headers{STOCHASTIC GRADIENT-RESPONSE SCHEMES FOR COMPUTING QNE}{ZHUOYU XIAO and UDAY V. SHANBHAG}

% Title. If the supplement option is on, then "Supplementary Material"
% is automatically inserted before the title.
\title{Computing Equilibria in Stochastic Nonconvex and Non-monotone Games via Gradient-Response Schemes\thanks{Submitted to the editors \today. A very preliminary version \cite{xiao-shanbhag-2025-GR-conference} appeared in the 7th Annual Learning for Dynamics \& Control Conference, 2025. This work was funded in part by the ONR under grants N$00014$-$22$-$1$-$2589$, AFOSR Grant FA9550-24-1-0259,  DOE grant DE-SC$0023303$.}}

% Authors: full names plus addresses.
\author{Zhuoyu Xiao \and Uday V. Shanbhag\thanks{University of Michigan, Ann Arbor, MI, 48109. (\email{zyxiao, udaybag@umich.edu}).}}

%%% Local Variables: 
%%% mode:latex
%%% TeX-master: "ex_article"
%%% End: 

\usepackage{comment}

% Define your custom conditional environments

% Optional PDF information
\ifpdf
\hypersetup{
  pdftitle={An Example Article},
  pdfauthor={D. Doe, P. T. Frank, and J. E. Smith}
}
\fi

% The next statement enables references to information in the
% supplement. See the xr-hyperref package for details.

% FundRef data to be entered by SIAM
%<funding-group specific-use="FundRef">
%<award-group>
%<funding-source>
%<named-content content-type="funder-name"> 
%</named-content> 
%<named-content content-type="funder-identifier"> 
%</named-content>
%</funding-source>
%<award-id> </award-id>
%</award-group>
%</funding-group>

\begin{document}

\maketitle

% REQUIRED
\begin{abstract}
We consider a class of smooth $N$-player noncooperative games, where players' objectives are expectation-valued and potentially nonconvex. In such a setting, we consider the largely open question of efficiently computing a quasi-Nash equilibrium (QNE) via a single-step gradient-response framework. First, under a suitably defined quadratic growth property, we prove that both the stochastic synchronous gradient-response (\textbf{SSGR}) scheme and its asynchronous counterpart (\textbf{SAGR}) are characterized by almost sure convergence to a QNE and a sublinear rate guarantee. Second, under a quasi sharpness property, we show that the deterministic synchronous variant displays a linear rate of convergence to a QNE by leveraging a geometric decay in steplengths. This paves the way for developing a practically implementable two-stage scheme that combines sublinearly convergent schemes with a locally linearly convergent second phase. Notably, when the game admits a pseudoconvex potential function, the above convergence claims can be strengthened to a Nash equilibrium (NE), rather than merely to a QNE.  Third, when player problems are convex but the associated concatenated gradient map is potentially non-monotone, we propose a stochastic asynchronous modified gradient-response (\textbf{SAMGR}) scheme which can efficiently obtain an NE under the strict copositivity condition. Collectively, our findings represent some of the first inroads into the tractable computation of QNE/NE in nonconvex settings, leading to a set of single-step schemes that are characterized by broader reach while providing rate guarantees. We present applications satisfying the prescribed requirements where preliminary empirical studies appear promising.
\end{abstract}

% REQUIRED
\begin{keywords}
nonconvex games, quasi-Nash equilibrium, gradient-response, quasi sharpness, regularized gap function
\end{keywords}

% REQUIRED
\begin{MSCcodes}
90C25, 90C33, 91AC8
\end{MSCcodes}

\section{Introduction}\label{Sec-1}

In the last several decades, Nash equilibrium (NE) \cite{nash-1951} has assumed growing relevance in  engineered and economic systems, complicated by the presence of competition between a set of self-interested entities. Managing such systems has necessitated the study of algorithms for computing an NE \cite{facchinei-pang-2009}. Specifically, we consider the $N$-player noncooperative game ${\cal G}(\mathbf{f},X, \bxi)$, where ${\bf f}$ denotes the collection of player-specific objectives, i.e.
${\bf f} \triangleq \{f_i\}_{i=1}^N$, $X \triangleq \prod_{i=1}^N X_i$ denotes the Cartesian product of player-specific strategy sets, and the randomness is captured by the random variable $\bxi: \Omega \to
\mathbb{R}^m$ defined on the probability space $(\Omega, \mathcal{F}, \mathbb{P})$. In this game, for any $i \in [N] \triangleq \left\{ 1, 2, \cdots, N \right\}$, the $i$th player solves the optimization problem
\begin{equation*}\label{SQNE}
    \hspace*{-0.4in} \min_{x_i \in X_i} f_i(x_i, x_{-i}) \triangleq \mathbb{E} \left[ \, \tilde{f}_i(x_i, x_{-i}, \boldsymbol{\xi}) \, \right], ~ \, \forall \, i \in [N], \tag{\text{Player$_i(x_{-i})$}}
\end{equation*}
where $X_{i} \subseteq \mathbb{R}^{n_{i}}$, $X_{-i} \triangleq {\prod_{j \neq i}}\ X_j$, and $x_{-i} \triangleq (x_j)_{j \ne i} \in X_{-i}$. In addition, $\Xi \triangleq \left\{ \bxi(\omega) \mid \omega \in \Omega \right\}$ and each $\Tilde{f}_{i}: X \times \Xi \to \mathbb{R}$ is a real-valued function. While most algorithms require convexity and smoothness in player objectives, significantly limiting the applicability of such models, Pang and Scutari \cite{pang-scutari-2011} introduced a weaker equilibrium solution concept in 2011, referred to as a quasi-Nash equilibrium (QNE), that aligns with B-stationarity in optimization problems \cite[Definition 6.1.1]{cui-pang-2021}. 

{\bf Related work.} The NE computation under uncertainty has prompted a study of stochastic gradient-response (GR) \cite{koshal-nedic-shanbhag-2013, lei-shanbhag-2022, yousefian-nedic-shanbhag-2016} as well as best-response (BR) schemes \cite{lei-shanbhag-2022, lei-shanbhag-pang-sen-2020}. In  deterministic nonconvex games, QNE computation has leveraged BR schemes enabled by surrogation~\cite{cui-pang-2021, pang-razaviyayn-2016, razaviyayn-2014,
xiao-shanbhag-2026-BR}. Recall that a QNE of a smooth nonconvex game can be captured by a solution of the associated non-monotone variational inequality problem, which belongs to a challenging class of problems that has seen some recent study. Table \ref{table-1} details extragradient-type (EG) schemes for solving
nonmonotone VIs under the Minty condition and its
variants~\cite{alacaoglu-kim-wright-2024, diakonikolas-daskalakis-jordan-2021,
hsieh-iutzeler-malick-mertikopoulos-2020, huang-zhang-2024, vankov-nedic-sankar-2023}, pseudomonotonicity and its variants
\cite{dang-lan-2015, iusem-jofre-oliveira-thompson-2017, iusem-jofre-oliveira-thompson-2019, kannan-shanbhag-2019, kotsalis-lan-li-2022, yousefian-nedic-shanbhag-2017}, and cohypomonotonicity
\cite{alacaoglu-kim-wright-2024}. Yet several key questions persist.

\vspace{0.1in}
\begin{custombox} 
\textbf{Questions.} (i) Can we develop synchronous/asynchronous SGR schemes with last-iterate guarantees for computing deterministic/stochastic QNE in nonconvex \\games under conditions that go beyond Minty-type conditions and variants of monotonicity? (ii) Can (locally) linear rates be achieved without relying on strong monotonicity? (iii) Are there conditions under which convergence can be strengthened from QNE to NE, despite the scourge of nonconvexity? 
\end{custombox}
\vspace{0.1in}

We resolve the above questions by deriving asymptotic convergence as well as sublinear/linear rate guarantees under diverse assumptions, as summarized in Table~\ref{table-1}.

\begin{table}[htb]
    \centering
    \tiny
    \renewcommand{\arraystretch}{1.0}

    \begin{tabular}{@{}c@{}}

    \resizebox{0.96\textwidth}{!}{%
    \begin{tabular}{|c|ccccc|}
        \Xhline{1.2pt}
        \multicolumn{6}{|c|}{\textbf{Recent work}} \\
        \Xhline{1.2pt}
        \textbf{condition} & \textbf{scheme} & \textbf{stoch.} & \textbf{constr.} & \textbf{conv.} & \textbf{rate} \\
        \hline

        PM/SPM
        & SEG \cite{iusem-jofre-oliveira-thompson-2017, iusem-jofre-oliveira-thompson-2019, kannan-shanbhag-2019} 
        & \ding{51} & \ding{51} & \makecell{a.s.; in expectation} & sublinear \\
        \hline

        GM
        & EG \cite{dang-lan-2015} 
        & \ding{55} & \ding{51} & \ding{51} & sublinear \\
        \hline

        \multirow{2}{*}{GSM} 
        & SOE \cite{kotsalis-lan-li-2022} 
        & \ding{51} & \ding{51} & in expectation & linear \\
        \cline{2-6}
        & SGDA \cite{loizou-berard-gidel-mitliagkas-lacoste-julien-2021} 
        & \ding{55} & \ding{55} & \ding{51} & linear \\
        \hline

        $p$-QS
        & Popov \cite{vankov-nedic-sankar-2023} 
        & \ding{51} & \ding{51} & \makecell{a.s.; in expectation} & sublinear \\
        \hline

        \multirow{3}{*}{MVI} 
        & EG \cite{arefizadeh-nedic-2024} 
        & \ding{55} & \ding{51} & limit points & \ding{55} \\
        \cline{2-6}
        & ARE/PGR/EG \cite{huang-zhang-2024}
        & \ding{55} & \ding{51} & \ding{51} & sublinear \\
        \cline{2-6}
        & DSEG \cite{hsieh-iutzeler-malick-mertikopoulos-2020} 
        & \ding{51} & \ding{55} & \makecell{a.s.; in expectation} & sublinear \\
        \hline

        {WMVI} 
        & EG+ \cite{diakonikolas-daskalakis-jordan-2021} 
        & \ding{55} & \ding{55} & \ding{51} & sublinear \\
        \hline

        CHM/MVI
        & KM \cite{alacaoglu-kim-wright-2024} 
        & \ding{51} & \ding{51} & in expectation & sublinear \\
        \hline
    \end{tabular}
    }

    \\

    \resizebox{0.96\textwidth}{!}{%
    \begin{tabular}{|c|ccccc|}
        \Xhline{1.2pt}
        \multicolumn{6}{|c|}{\textbf{This work}} \\
        \Xhline{1.2pt}
        \textbf{condition} & \textbf{scheme} & \textbf{stoch.} & \textbf{constr.} & \textbf{conv.} & \textbf{rate} \\
        \hline

        AA 
        & SSGR/SAGR 
        & \ding{51} & \ding{51} 
        & \makecell{a.s. subsequential\\(Theorems~\ref{SSGR-AA},~\ref{SAGR-AA})} 
        & \ding{55} \\
        \hline

        QG 
        & SSGR/SAGR 
        & \ding{51} & \ding{51} 
        & \makecell{a.s.; in expectation\\(Theorems~\ref{SSGR-QG},~\ref{SAGR-QG})} 
        & \makecell{sublinear (Theorems~\ref{SSGR-QG},~\ref{SAGR-QG}):\\$\mathcal{O}(k^{-1})$, $\widetilde{\mathcal{O}} (k^{-\min\{2\alpha,\,1/2-\widetilde{d}\}})$} \\
        \hline

        QS
        & SGR 
        & \ding{55} & \ding{51} 
        & \ding{51} 
        & \makecell{locally linear (Theorem~\ref{SGR-QS}):\\
        $\mathcal{O} ((1-(1-\lambda)(\beta/M_F)^2)^k)$} \\
        \hline

        SC
        & SAMGR 
        & \ding{51} & \ding{51} 
        & \makecell{a.s.; in expectation\\(Theorem~\ref{a.s.-SAMGR})} 
        & \makecell{sublinear (Theorem~\ref{rate-complexities-SAMGR}):\\
        $\mathcal{O} ( n^{2-h(a,b,e)}k^{-1} )$} \\
        \Xhline{1.2pt}
    \end{tabular}
    }

    \end{tabular}
    \vspace{-0.1in}
	\captionsetup{font=tiny} % Affects only this specific caption
    \caption{A summary of some recent schemes for non-monotone VIs and the results established in this work. (S)PM: (strongly) pseudomonotone; G(S)M: generalized (strongly) monotone; $p$-QS: $p$-quasi sharpness; (W)MVI: (weak) Minty VI; CHM: cohypomonotone. Other problem-dependent constants are specified in the formal theorem statements.}
    \label{table-1}
    \vspace{-0.2in}
\end{table}

{\bf Outline.} After providing some preliminaries in Section \ref{Sec-2}, we present stochastic synchronous and asynchronous gradient-response (\textbf{SSGR} \& \textbf{SAGR}) schemes with their convergence guarantees in Sections \ref{Sec-3} and \ref{Sec-4}, respectively. Under the additional requirement of pseudoconvexity of the potential function, our guarantees can be strengthened to  the attainment of an NE. In Section \ref{Sec-5}, for stochastic convex but potentially non-monotone games, we present a stochastic asynchronous modified gradient-response (\textbf{SAMGR}) scheme with a.s. convergence and a sublinear rate for computing an NE under the strict copositivity requirement.  We present applications satisfying the prescribed properties in Section \ref{Sec-6} and conclude in Section \ref{Sec-7}.

\textbf{Notation.} The expectation and a realization of a random variable
$\bxi$ are denoted by $\mathbb{E}[\bxi]$ and $\xi$, respectively. The notation $\Pi_{X}[x]$ denotes the Euclidean projection of $x$ onto set $X$. The interior of set $X$ is denoted by $\mathrm{int}\:(X)$. A real-valued function $f:\mathbb{R}^{n}\to\mathbb{R}$ is called a C$^{1}$ function if it is continuously differentiable. The distance from a vector $x\in \mathbb{R}^{n}$ to $A$ is defined as $\mathrm{dist} \: (x, A) \triangleq \inf_{y\in A} \| y-x \|$. We denote the tangent cone and  Fréchet normal cone to $X$ at $x$ by $\mathcal{T}_{X}(x)$ and $\mathcal{N}_{X}(x)$, respectively.

\section{Preliminaries}\label{Sec-2}

We impose the following assumption throughout this paper.

\vspace{5pt}

\emph{Assumption $\mathrm{A}$.} (A1) Each $X_{i}\subseteq \mathbb{R}^{n_{i}}$ is convex and closed for any $i\in [N]$. (A2) Given $x_{-i} \in X_{-i}$, each $f_{i}(\bullet, x_{-i}) \triangleq \mathbb{E}[\Tilde{f}_{i}(\bullet, x_{-i}, \bxi)]$ is C$^{1}$ on an open set $\mathcal{O}_{i} \supseteq X_{i}$ such that $\nabla_{x_{i}} f_{i}(x_{i}, x_{-i}) = \mathbb{E}[\nabla_{x_{i}} \Tilde{f}_{i}(x_{i}, x_{-i},\bxi)]$ for any $i\in [N]$. (A3) The solution set $X^{*}$ of $\mathrm{VI}\:(X, F)$ is closed and nonempty, where $F(x) \triangleq \left(\nabla_{x_i} f_i(x_{i}, x_{-i})\right)_{i=1}^N$. $\hfill \Box$

\subsection{QNE and VIs}

When for any $i \in [N]$, the $i$th player-specific objective $f_{i}(\bullet, x_{-i})$ loses convexity in \eqref{SQNE}, both establishing existence and computing equilibria become challenging. This has led to the weaker solution concept of the {\em quasi-Nash equilibrium} (QNE), based on leveraging B-stationarity \cite[Definition 6.1.1]{cui-pang-2021}. Given an optimization problem $\min_{x\in X} f(x)$, where $f$ is directionally differentiable, we say that $x^{\ast}\in X$ is a B-stationary point of $f$ on $X$ if $f^{\prime}(x^{\ast};v)\ge 0$ for all $v \in {\cal T}_{X}(x^{\ast})$, where $f^{\prime}(x^{\ast};v)$ represents the directional derivative at $x^{\ast}$ along the direction $v$ and ${\cal T}_{X}(x^{\ast})$ denotes the tangent cone to set $X$ at $x^{\ast}$. If $f$ is differentiable and $X$ is convex, B-stationarity of $x^{\ast}$ reduces to
\begin{equation}\label{B-stationarity}
    \nabla f(x^{\ast})^\top (x-x^{\ast}) \geq 0, ~ \, \forall \, x \in X.
\end{equation}
Inspired by this setup, Pang and Scutari \cite{pang-scutari-2011} introduced the QNE. In the definition below, we focus exclusively on the smooth case.

\begin{definition}[\mbox{\cite[Definition 2]{pang-scutari-2011}}]
     Consider the $N$-player game $\mathcal{G}({\bf f},X, \bxi)$. For any $i \in [ N ]$, suppose that each $f_{i}(\bullet, x_{-i})$ is C$^{1}$ for any $x_{-i} \in X_{-i}$. Then we say that $x^{\ast} \triangleq (x^{\ast}_{i})_{i=1}^{N}$ is a quasi-Nash equilibrium (QNE) if for any $i\in [N]$, we have
\begin{equation}\label{QNE}
    \nabla_{x_{i}}f_{i}(x^{\ast}_{i},x^{\ast}_{-i})^\top\left( x_{i}-x^{\ast}_{i} \right) \, \geq \, 0, ~ \, \forall \, x_{i} \in X_{i}.
\end{equation}
\end{definition}

We observe that $x^{\ast}$ is a QNE if and only if $x^{\ast}$ solves the stochastic variational inequality $\mathrm{SVI}\:(X, F)$, i.e., $x^{\ast}$ satisfies
    $F(x^{\ast})^{\top}(x-x^{\ast}) \geq 0$ for all $x \in  X,$
where $F$ is expectation-valued, defined as $F(x) \triangleq \left(\nabla_{x_i} f_i(x)\right)_{i=1}^N$. This facilitates the utilization of VI literature~\cite{facchinei-pang-2003}. We first provide an existence guarantee for QNE, extending the classical existence result of an NE~\cite{nash-1951}, where $\Tilde{F}(x, \xi) \triangleq (\nabla_{x_{i}}\Tilde{f}_{i}(x, \xi))_{i=1}^{N}$ for any $x, \xi$.

\begin{theorem}[QNE existence]\label{QNE-existence}\em
    Consider the $N$-player game $\mathcal{G}({\bf f},X, \bxi)$. Suppose that assumptions $\mathrm{(A1)}$-$\mathrm{(A2)}$ hold. Then a QNE exists if (i) or (ii) holds: (i) If $X$ is bounded; (ii) If there exists $\widehat{x}\in X$ such that
    \begin{equation*}
        \liminf\limits_{\substack{\|x\|\to\infty, x\in X}} \Tilde{F}(x,\xi)^{\top}(x-\widehat{x}) \geq 0,~ \textrm{a.s.}.
    \end{equation*}
\end{theorem}
\begin{proof}
     (i) is directly from~\cite[Corollary 2.2.5]{facchinei-pang-2003}, while (ii) was proven in \cite[Proposition 3.5]{ravat-shanbhag-2011} by combining Lebesgue convergence theorems with variational analysis.
\end{proof}

\begin{remark} Assumption $\mathrm{(A2)}$ imposes smoothness of $f_{i}(\bullet, x_{-i})$, without which QNE may fail to exist despite each $X_{i}$ being compact and convex \cite[p.\,105]{pang-razaviyayn-2016}. $\hfill \Box$
\end{remark}

\subsection{Three properties}\label{Sec-2.2}

Throughout Sections \ref{Sec-3}-\ref{Sec-4}, we derive convergence and rate guarantees for {\bf SSGR} and {\bf SAGR} under three properties: (i) acute angle \eqref{AA}; (ii) quadratic growth \eqref{QG}; and (iii) quasi sharpness \eqref{QS}. We present their definitions below where $X^{\ast}$ denotes the solution set of $\mathrm{VI}\:(X, F)$.

\begin{definition}[Three properties]\label{def:property_VI}
    We say that $\mathrm{VI}\:(X, F)$ satisfies the

    \emph{(i) (\cite[p.\,166]{konnov-2007})} acute angle \eqref{AA} property if %we have
    \begin{equation}\label{AA}
        (x-x^{*})^{\top}F(x) > 0, ~ \forall x\in X\setminus X^{*}, ~ \forall x^{*}\in X^{*}. \tag{AA}
    \end{equation}

    \emph{(ii) (\cite[Assumption 3]{vankov-nedic-sankar-2023})} quadratic growth \eqref{QG} property if there exists $\alpha > 0$ such that
    \begin{equation}\label{QG}
        (x-x^{*})^{\top}F(x) \geq \alpha \: \mathrm{dist}^{2}(x, X^{*}), ~ \forall x\in X, ~ \forall x^{*}\in X^{*}. \tag{QG}
    \end{equation}

    \emph{(iii) (\cite[Assumption 3]{vankov-nedic-sankar-2023})} quasi sharpness \eqref{QS} if there exists $\beta > 0$ such that
    \begin{equation}\label{QS}
        (x-x^{*})^{\top}F(x) \geq \beta \: \mathrm{dist}\:(x, X^{*}), ~ \forall x\in X, ~ \forall x^{*}\in X^{*}. \tag{QS}
    \end{equation}
\end{definition}

We provide some remarks on the above properties and discuss their connections with several common conditions in the literature. Note that the $p$-quasi sharpness property \cite[Assumption 3]{vankov-nedic-sankar-2023} requires that there exists some $p > 0$, $c > 0$ such that $(x-x^{*})^{\top}F(x) \geq c \: \mathrm{dist}^{p}(x, X^{*})$ holds for all $x\in X$ and all $x^{*}\in X^{*}$. This property reduces to \ref{QG} and \ref{QS} properties when $p = 2$ and $p = 1$, respectively. 

The quadratic growth \eqref{QG} property may be more suitable than a similar property introduced in~\cite{kotsalis-lan-li-2022, loizou-berard-gidel-mitliagkas-lacoste-julien-2021} when $\mathrm{VI}\:(X, F)$ admits multiple solutions. We say that $F$ satisfies generalized strong monotonicity \eqref{GSM} \cite{kotsalis-lan-li-2022} or quasi-strong monotonicity (QSM) \cite{loizou-berard-gidel-mitliagkas-lacoste-julien-2021} on $X$ if there exists $\alpha > 0$ such that
\begin{equation}\label{GSM}
    (x-x^*)^{\top}F(x) \geq \alpha \|x-x^*\|^{2}, ~ \forall x\in X. \tag{GSM}
\end{equation}
Note that such a definition implicitly requires that $X^{*}$ is a singleton. Consider two distinct solutions $x^{*}\neq \widehat{x}$. Then we may see that the left-hand side $(\widehat{x}-x^*)^{\top}F(\widehat{x})$ is nonpositive (from $\widehat{x}$ being a solution)  while the right-hand side $\alpha \|\widehat{x}-x^*\|^{2}$ is strictly positive, which is a contradiction. We say that $\mathrm{VI}\:(X, F)$ satisfies strong pseudomonotonicity \eqref{SP} \cite[Definition 6.4]{karamardian-schaible-1990} if there exists $\alpha > 0$ such that
\begin{equation}\label{SP}
    (x-y)^\top F(y) \geq 0 \implies (x-y)^\top F(x) \geq \alpha \|x-y\|^2, ~ \forall x, y\in X. \tag{SP}
\end{equation}
From the definition of $\mathrm{VI}\:(X,F)$ and by choosing $y = x^\ast \in X$, \ref{SP} implies \ref{GSM}.  We now turn to the quasi sharpness \eqref{QS} property. If $X$ is compact, then $\mathrm{dist}\:(x,X^*)$ is bounded for any $x\in X$ and hence \ref{QS} implies \ref{QG}. Recall that a mapping $F$ is monotone on $X$ if $(x-y)^{\top}(F(x)-F(y)) \geq 0$ holds for any $x, y\in X$. Recall  that $\mathrm{VI}\:(X, F)$ satisfies \eqref{WS} \cite[Definition 2]{yousefian-nedic-shanbhag-2014} if there exists $\beta > 0$ such that
\begin{equation}\label{WS}
    (x-x^{*})^{\top}F(x^{*}) \geq \beta \: \mathrm{dist}\:(x, X^{*}), ~ \forall x\in X, ~ \forall x^{*}\in X^{*}. \tag{WS}
\end{equation}
Under monotonicity of $F$, one can easily verify that \ref{WS} implies \ref{QS}.  The Minty variational inequality (MVI) consists in finding an $x^\ast \in X$ such that
\begin{equation}\label{MVI}
    F(x)^\top(x-x^\ast) \geq 0, ~ \forall x^{*}\in X^{*}. \tag{MVI}
\end{equation}
When $F$ is a monotone map, we may show that the solution set to $\mathrm{MVI}\:(X, F)$ is identical to that of $\mathrm{VI}\:(X, F)$. A particular focus has been on resolving the weak MVI (WMVI) with $\rho > 0$, defined as requiring an $x^{\ast} \in X^{\ast}$ such that 
\begin{equation}\label{WMVI}
    F(x)^{\top} (x-x^\ast) \geq -\frac{\rho}{2} \| F(x) \|^{2}, ~ \forall x^{*}\in X^{*}. \tag{WMVI}
\end{equation}
We may see that \eqref{WMVI} reduces to \eqref{MVI} when $\rho = 0$. For any $\rho > 0$, we say that $F$ is $\tfrac{\rho}{2}-$cohypomonotone \cite{combettes-pennanen-2004} on $X$ if
\begin{equation}\label{CHM}
    (F(x)-F(y))^\top(x-y) \, \ge \, -\tfrac{\rho}{2}\|F(x)-F(y)\|^2, ~ \forall x, y \in X. \tag{CHM}
\end{equation}
We may observe that if $F(x^\ast) = 0$ for some $x^\ast \in X^\ast$, then $\tfrac{\rho}{2}$-cohypomonotonicity of $F$ implies \eqref{WMVI}. All of the relations discussed above are summarized in Figure \ref{VI-properties-summary}.

\begin{figure}[htb]
    \centering
    \hspace*{-0.75cm}
    \begin{tikzpicture}[
        >=Stealth, 
        node distance=1.5cm
        ]
        
        \matrix (m) [
        matrix of nodes,
        row sep=2.8em, 
        column sep=4em,
        nodes={anchor=center}
        ] {
        \ref{QG} &[1cm] $p$-quasi sharpness & \ref{QS} \\
        \ref{GSM} &[1cm] \ref{MVI} & \ref{WS} + monotonicity \\
        \ref{SP} &[1cm] \ref{WMVI} & \ref{CHM} \\
        };
        
        \path[->, font=\small]
        
        (m-1-2) edge node[above] {$p=2$} (m-1-1)
        (m-1-2) edge node[above] {$p=1$} (m-1-3)

        ([xshift = -1mm] m-1-1.south) edge node[left, align=right] {$X^{*}$ singleton} ([xshift = -1mm] m-2-1.north)

        ([xshift = 1mm] m-2-1.north) edge node[right] {} ([xshift = 1mm] m-1-1.south)

        (m-1-1) edge (m-2-2)
        (m-1-3) edge (m-2-2)

        (m-2-1) edge (m-2-2)
        (m-2-3) edge (m-1-3)
        (m-2-2) edge (m-3-2)

        (m-3-1) edge (m-2-1)
        (m-3-1) edge (m-2-2)
        (m-3-1) edge (m-3-2)
        
        (m-1-3) edge[bend right=20] node[above, pos=0.52] {$X \text{ compact}$} (m-1-1);

        \path[<->, font=\small]
        (m-3-2) edge node[above] {$F(x^{*}) = 0$} (m-3-3);
    \end{tikzpicture}
    \caption{Relations between different properties for VIs.}
    \label{VI-properties-summary}
\vspace{-0.3in}
\end{figure}

\section{A synchronous gradient-response scheme}\label{Sec-3}

In this section, we consider QNE computation via a synchronous stochastic GR ({\bf SSGR}) architecture, described in Algorithm~\ref{SSGR}. Under such a framework, we derive convergence guarantees under the properties given in Definition \ref{def:property_VI}. In this section, we impose the assumption of boundedness of $F$ in Assumption $\mathrm{(B3)}$, instead of imposing the commonly used Lipschitz continuity. We denote the history generated by \textbf{SSGR} at iteration $k$ by $\mathcal{F}_{k}\triangleq \sigma \{ x^{0}, \cup_{t=0}^{k-1} \{ \nabla_{x_{i}} \tilde{f}_{i}(x^{t}, \xi^{t}_{i}) \}_{i=1}^{N} \}$.

\vspace{5pt}

\emph{Assumption $\mathrm{B}$.} (B1) For any $k\geq 0$, we have $\mathbb{E}[ w^{k} \mid \mathcal{F}_{k} ] = 0$, where $w^k \triangleq (w_i^k)_{i=1}^N$ and $w^{k}_{i} \triangleq \nabla_{x_{i}}\Tilde{f}_{i}(x^{k}_{i}, x^{k}_{-i}, \xi^{k}_{i}) - \nabla_{x_{i}} \mathbb{E}[ \Tilde{f}_{i}(x^{k}_{i}, x^{k}_{-i}, \bxi) ]$. (B2) For any $k\geq 0$, the second moment bound $\mathbb{E}[ \|w^{k}\|^2 \!\mid\! \mathcal{F}_{k} ]\leq M_{w}^{2}$ holds for some $M_{w} > 0$. (B3) The bound $\| F(x) \| \leq M_{F}$ holds for any $x\in X$ and some $M_{F} > 0$. $\hfill \Box$

\vspace{-0.05in}
\begin{algorithm}[htb]
    \caption{~Stochastic Synchronous GR ({\bf SSGR}) Scheme}
    Set $k = 0$. Initialize $x^{0}\in X$ and stepsizes $\{\gamma^{k}\}_{k\geq 0}$. Iterate until $k\geq K$.\\
    \textbf{Strategy update.} Generate an i.i.d. realization $\xi_{i}^{k}$ of $\bxi$ and update
    \begin{equation*}
        x^{k+1}_{i} \triangleq \Pi_{X_{i}} \left[ x^{k}_{i} - \gamma^{k}\nabla_{x_{i}}\Tilde{f}_{i}(x^{k}_{i}, x^{k}_{-i}, \xi_{i}^{k}) \right], ~ \forall i\in [N].
    \end{equation*}
    \textbf{Return.} $x^{K}$ as final estimate.
    \label{SSGR}
\end{algorithm}
\vspace{-0.05in}

We first derive two recursions for \textbf{SSGR} without imposing any properties from Definition \ref{def:property_VI}. These results play a crucial role in the subsequent analysis. Note that $X^\ast$ is not necessarily a convex set, implying that $\Pi_{X^{*}}\left[x^{k} \right]$ may not be unique.

\begin{lemma}[SSGR recursion]\label{SSGR-recursions}\em
    Consider the $N$-player game $\mathcal{G}({\bf f},X,\bxi)$. Suppose that Assumptions $\mathrm{A}$ and $\mathrm{B}$ hold. Consider the sequence of iterates $\{x^{k}\}_{k=0}^{\infty}$ generated by \textbf{SSGR} and suppose that the stepsizes $\{\gamma^{k}\}_{k\geq 0}$ satisfy $\sum_{k=0}^{\infty}\gamma^{k} = \infty$ and $\sum_{k=0}^{\infty}(\gamma^{k})^{2}<\infty$.   (i) Let $x^{*}\in X^{*}$ be any QNE. Then for any $k$, 
    \begin{equation*}
        \mathbb{E}[ \|x^{k+1}-x^{\ast}\|^2 \mid \mathcal{F}_{k} ] \leq \|x^{k}-x^{\ast}\|^2 -2\gamma^{k}(x^{k}-x^{\ast})^{\top}F(x^{k}) + 2(\gamma^{k})^{2} \left( M^{2}_{w} + M^{2}_{F} \right).
    \end{equation*}

    \noindent (ii) Let $x^{k, \ast} \in \Pi_{X^{*}}[x^{k}]$. Then we have for any $k$,
    \begin{equation*}
        \mathbb{E}[ \mathrm{dist}^{2} (x^{k+1}, X^{*}) \mid \mathcal{F}_{k} ] \leq \mathrm{dist}^{2} (x^{k}, X^{*}) -2\gamma^{k}(x^{k}-x^{k, \ast})^{\top} F(x^{k}) + 2(\gamma^{k})^{2} \left( M^{2}_{w} + M^{2}_{F} \right).
    \end{equation*}
\end{lemma}
\begin{proof}
    (i) For any $x^\ast \in X^\ast$ and any $k \ge 0$,
    \allowdisplaybreaks
    \begin{align*}
        & \|x^{k+1} - x^{\ast}\|^2 \leq \sum\limits_{i=1}^{N} \| (x^{k}_{i}-x^{\ast}_{i}) - \gamma^{k}\nabla_{x_{i}}\Tilde{f}_{i}(x^{k}_{i},x^{k}_{-i},\xi^{k}) \|^2 = \|x^{k}-x^{\ast}\|^2 \\
	    & - 2\gamma^{k}\sum_{i=1}^{N}(x^{k}_{i}-x^{\ast}_{i})^{\top}\nabla_{x_{i}}\Tilde{f}_{i}(x^{k}_{i}, x^{k}_{-i}, \xi_{i}^{k}) +(\gamma^{k})^2\sum\limits_{i=1}^{N}\|\nabla_{x_{i}}\Tilde{f}_{i}(x^{k}_{i},x^{k}_{-i},\xi^{k}_{i})\|^2 \\
        & \le \|x^{k}-x^{\ast}\|^2 - 2\gamma^{k}\sum_{i=1}^{N}(x^{k}_{i}-x^{\ast}_{i})^{\top}(\nabla_{x_{i}}\mathbb{E}[\Tilde{f}_{i}(x^{k}_{i},x^{k}_{-i},\bxi)]+w^{k}_{i}) + 2(\gamma^{k})^2\sum_{i=1}^{N}\|w^{k}_{i}\|^2 \\
        &+ 2(\gamma^{k})^2\sum_{i=1}^{N}\| \nabla_{x_{i}}\mathbb{E}[\Tilde{f}_{i}(x^{k}_{i},x^{k}_{-i},\bxi)] \|^2,
    \end{align*}
    where the first inequality follows from the nonexpansiveness of the Euclidean projection. By taking conditional expectations and invoking assumption $\mathrm{(B1)}$, we have
    \allowdisplaybreaks
    \begin{align*}
        \mathbb{E}[ \|x^{k+1} &- x^{\ast}\|^2 \mid \mathcal{F}_{k} ] \leq \|x^{k}-x^{\ast}\|^2 \underbrace{- 2\gamma^{k}\sum_{i=1}^{N}(x^{k}_{i}-x^{\ast}_{i})^{\top}(\nabla_{x_{i}}\mathbb{E}[\Tilde{f}_{i}(x^{k}_{i},x^{k}_{-i},\bxi)])}_{\textrm{Term (a)\,=\,$-2\gamma^{k}(x^{k}-x^{\ast})^{\top}F(x^{k})$}} \\
        &\underbrace{+ 2(\gamma^{k})^2\sum_{i=1}^{N} \mathbb{E}[ \|w^{k}_{i}\|^2 \mid \mathcal{F}_{k}]}_{\textrm{Term (b)}}\underbrace{+ 2(\gamma^{k})^2\sum_{i=1}^{N}\| \nabla_{x_{i}}\mathbb{E}[\Tilde{f}_{i}(x^{k}_{i},x^{k}_{-i},\bxi)] \|^2}_{\textrm{Term (c)}}.
    \end{align*}
    By Assumption $\mathrm{B}$, we may obtain the upper bounds $\textrm{Term (b)} \leq 2(\gamma^{k})^2 M^{2}_{w}$ and $\textrm{Term (c)} \leq 2(\gamma^{k})^2 M^{2}_{F}$. Then we complete the proof of (i). By invoking the definition of $x^{k, \ast}$ and choosing $x^\ast = x^{k,\ast}$, we have that $\|x^k - x^{k,\ast}\|^2 = \mathrm{dist}^2(x^k,X^\ast)$. The result in (ii) follows from noting that $\mathrm{dist}^{2} (x^{k+1}, X^{*}) \leq \|x^{k+1}-x^{k, \ast}\|^2$.
\end{proof}

\subsection{SSGR under \ref{AA} and \ref{QG}}\label{Sec-3.1}

We first recall three useful lemmas on the convergence of random variables.

\begin{lemma}\label{three-useful-lemma}\em
(i) (Robbins-Siegmund \cite{robbins-siegmund-1971}) Let $\{\nu^{k}\}_{k=0}^{\infty}$, $\{\theta^{k}\}_{k=0}^{\infty}$, $\{\varepsilon^{k}\}_{k=0}^{\infty}$ and $\{\delta^{k}\}_{k=0}^{\infty}$ be nonnegative sequences of random variables such that $\sum_{k=0}^{\infty}\delta^{k}<\infty$, $\sum_{k=0}^{\infty}\varepsilon^{k}<\infty$ and $\mathbb{E} [ \: \nu^{k+1} \mid \mathcal{F}_{k} \: ] \leq (1+\delta^{k})\nu^{k} - \theta^{k} + \varepsilon^{k}$, a.s. Then, $\sum_{k=0}^{\infty}\theta^{k}<\infty$ and $\nu^k \xrightarrow{a.s.} v$ as $k\to \infty$ where $v \ge 0$ is a random variable.

(ii) (\cite[Lemma 2.2.10]{polyak-1987}) Let $\{\nu^{k}\}_{k=0}^{\infty}$ be a nonnegative sequence of random variables and $\{\alpha^k\}$ and $\{\mu^k\}$ be deterministic sequences such that $0 \le \alpha^k \le 1$ and $\mu^k \ge 0$ for all $k$ and $\sum_{k=1}^{\infty} \alpha^k = \infty$ and $\lim_{k \to \infty} \tfrac{\mu^k}{\alpha^k} = 0$, and $\mathbb{E}[ \: \nu^{k+1} \mid {\cal F}_k \: ] \le (1-\alpha^k) \nu^k + \mu^k$ for $k \ge 0$. Then $\nu^k \xrightarrow{a.s.} 0$ as $k\to \infty$.

(iii) (\cite[Section 8.2]{shapiro-dentcheva-ruszczynski-2021}) Suppose that the nonnegative sequence $\{e^{k}\}_{k=0}^{\infty}$ satisfies $e^{k+1}\leq (1-2\alpha\gamma^{k})e^{k} + (\gamma^{k})^{2}M$ for all $k\geq 0$, where $\alpha, M>0$. Let $\gamma^{k} = \gamma^{0}/k$, where $\gamma^{0}>\tfrac{1}{2\alpha}$. Let $Q(\gamma^{0}) \triangleq \max \{\tfrac{(\gamma^{0})^{2}M}{2\alpha\gamma^{0}-1}, e^{0} \}$. Then for all $k \geq 1$, we have $e^{k} \leq \tfrac{Q(\gamma^{0})}{k}$. $\hfill \Box$
\end{lemma}

Based on Robbins-Siegmund lemma, we now derive asymptotic \emph{subsequential} a.s. convergence under \ref{AA} and an additional compactness assumption on $X^{\ast}$.

\begin{theorem}[SSGR under \ref{AA}]\label{SSGR-AA}\em 
    Consider the $N$-player game $\mathcal{G}({\bf f},X,\bxi)$. Suppose that Assumptions $\mathrm{A}$ and $\mathrm{B}$ hold, and that the \ref{AA} property is satisfied. Consider the sequence $\{x^k\}_{k=0}^{\infty}$ generated by \textbf{SSGR}. Suppose that the stepsizes $\{\gamma^{k}\}_{k=0}^{\infty}$ satisfy  $\sum_{k=0}^{\infty}\gamma^{k} = \infty$ and $\sum_{k=0}^{\infty}(\gamma^{k})^{2}<\infty$. If $X^{\ast}$ is compact, then \emph{some subsequence} of $\{x^{k}\}_{k=0}^{\infty}$ converges to a QNE a.s.
\end{theorem}
\begin{proof}
Beginning from the SSGR recursion in Lemma \ref{SSGR-recursions}-(i), we consider two cases: (I) There are infinitely many iterates $\{x^{k}\}$ in the solution set $X^{\ast}$. (II) There are finitely many iterates $\{x^{k}\}$ in the solution set $X^{\ast}$. In case (I), the final conclusion holds since $X^{\ast}$ is compact. In case (II), there are only finitely many $\{ x^{k} \}$ in the solution set $X^{\ast}$. Therefore for some large $K$, we have $x^{k}\in X\setminus X^{\ast}$ for any $k\geq K$.  By the square summability of $\{\gamma^k\}_{k=0}^{\infty}$ and \ref{AA} property $(x^{k}-x^{\ast})^{\top}F(x^{k})>0$ for any $x^k \in X \setminus X^\ast$ and $x^\ast \in X^\ast$, we may invoke Lemma~\ref{three-useful-lemma}-(i), whereby
\begin{equation*}
    \mathrm{(a)}\;\{ \|x^{k}-x^{\ast}\| \}_{k=0}^{\infty} \textrm{ converges a.s.} \; \textrm{ and } \; \mathrm{(b)}\;\sum_{k=0}^{\infty}2\gamma^{k}(x^{k}-x^{\ast})^{\top}F(x^{k})<\infty \textrm{ a.s.}
\end{equation*}
By  (a), we know that there exists some $a > 0$ such that $\lim_{k\to\infty} \|x^{k}-x^{\ast}\| = a \geq 0$ holds a.s. It implies that for some sufficiently large $\tilde K$, we have $\|x^{k}-x^{\ast}\|\leq a+1$ a.s. for any $k\geq \tilde{K}$, i.e., $\{x^{k}\}_{k = \tilde{K}}^{\infty}$ is bounded a.s. and consequently, the entire sequence $\{x^{k}\}_{k = 0}^{\infty}$ is bounded a.s. By the non-summability condition
$\sum_{k=0}^{\infty}\gamma^{k} = \infty$, (b) implies that $\liminf_{k\to\infty}(x^{k}-x^{\ast})^{\top}F(x^{k}) = 0$, i.e., there exists some subsequence $\{x^{k_{l}}\}_{l=0}^{\infty}$ such that $(x^{k_{l}}-x^{\ast})^{\top}F(x^{k_{l}})\to 0$ as $l\to \infty$. Since the sequence $\{x^{k}\}_{k = 0}^{\infty}$ is bounded a.s., it implies that the subsequence $\{x^{k_{l}}\}_{l=0}^{\infty}$ is bounded a.s. Without loss of generality, we may assume that $\{x^{k_{l}}\}_{l=0}^{\infty}$ is convergent a.s. (otherwise we may continue to take a subsequence if needed), i.e., $\lim_{l\to\infty} x^{k_{l}} = \Tilde{x}$ a.s. By continuity of $F$, it follows that
\begin{equation*}
    \lim_{l\to\infty} (x^{k_{l}}-x^{\ast})^{\top}F(x^{k_{l}}) = (\Tilde{x}-x^{\ast})^{\top}F(\Tilde{x}) = 0 \quad \mbox{a.s.}
\end{equation*}
By the \ref{AA} property, we have $\Tilde{x}\in X^{\ast}$ (where $\Tilde{x}$ may differ from $x^{\ast}$); if not, $(\Tilde{x} -x^{\ast})^{\top}F(\Tilde{x})>0$, violating the \ref{AA} property. This completes the proof.
\end{proof}

\begin{remark}
    One result on stochastic extragradient (SEG) methods \cite{hsieh-iutzeler-malick-mertikopoulos-2020} is particularly relevant to our findings. A stronger sequential (rather than subsequential) guarantee of almost sure convergence is established under a  weaker Minty condition \cite[Theorem 1]{hsieh-iutzeler-malick-mertikopoulos-2020}. However, this result leverages a ``double steplength'' framework and is provided for an unconstrained VI where a convenient residual function is available. Our schemes, while providing subsequential guarantees, are provided under constrained regimes and hold additional advantages in that the single-step structure requires only one oracle evaluation and one projection evaluation per iteration, making them cheaper and easier to implement than extragradient-type methods, especially in constrained, nonsmooth, large-scale, or distributed settings. $\hfill \Box$
\end{remark}

Having derived \emph{subsequential} a.s. convergence for \textbf{SSGR} under \ref{AA}, we now derive a sublinear rate statement for \textbf{SSGR} under \ref{QG}.

\begin{theorem}[SSGR under \ref{QG}]\label{SSGR-QG}\em
    Consider the $N$-player game $\mathcal{G}({\bf f}, X, \bxi)$. Suppose that Assumptions $\mathrm{A}$ and $\mathrm{B}$ hold, and that the \ref{QG} property is satisfied. Consider the sequence $\{x^k\}_{k=0}^{\infty}$ generated by \textbf{SSGR}. Suppose $\alpha>0$ is the \ref{QG} coefficient. Then we have the following.
    
    (i) (diminishing stepsize) Suppose $\gamma^k=\gamma^0/k$, where $\gamma^0>1/(2\alpha)$. Then we have that (i-1) $\lim_{k\to\infty} \mathrm{dist}\:(x^{k}, X^{*}) = 0$ a.s.; and (i-2) $\mathbb{E}[ \mathrm{dist}^{2} (x^{k}, X^{*}) ] \leq \tfrac{Q(\gamma^{0})}{k}$ holds for any $k\geq 1$, where $Q(\gamma^{0}) \triangleq \max \{ \tfrac{2(\gamma^{0})^{2}(M^{2}_{w} + M^{2}_{F})}{2\alpha\gamma^{0}-1}, \mathrm{dist}^{2} (x^{0}, X^{*}) \}$ with $M_{w}$ and $M_{F}$ given in Assumption $\mathrm{B}$.

    (ii) (constant stepsize) Consider the constant stepsize $\gamma^{k} \equiv \gamma \in (0, \tfrac{1}{2\alpha})$ for all $k\geq 0$. Then the following hold. (ii-1) For any sufficiently small $\gamma > 0$ such that $\log(\tfrac{1}{\gamma})>0$, we have $\mathbb{E}[ \mathrm{dist}^{2} (x^{k}, X^{*}) ] \leq \mathcal{O}(\gamma)$ for any $k > \lceil\tfrac{1}{2\alpha\gamma}\log(\tfrac{1}{\gamma})\rceil$. (ii-2)  For any fixed $K > \tilde{C} \triangleq \tfrac{C}{4\alpha^2}$, where $C \triangleq 2\left( M^{2}_{w} + M^{2}_{F} \right)$ and $M_{w}$ and $M_{F}$ are defined in Assumption $\mathrm{B}$, if we choose the constant stepsize $\gamma^{\ast}(K) \triangleq \tfrac{1}{2\alpha} (1 - (\tfrac{\tilde{C}}{K+1})^{1/K} )$, then we have $\mathbb{E}[ \mathrm{dist}^{2} (x^{K+1}, X^{*})] \leq \mathcal{O}(\tfrac{\log (K)}{K})$.  
\end{theorem}
\begin{proof} (i) By invoking the \ref{QG} property in the \textbf{SSGR} recursion given in Lemma \ref{SSGR-recursions}-(ii), it leads to
    \begin{equation}\label{SSGR-QG-conditional}
        \mathbb{E}[ \mathrm{dist}^{2} (x^{k+1}, X^{*}) \mid \mathcal{F}_{k} ] \leq (1-2\alpha\gamma^{k}) \, \mathrm{dist}^{2} (x^{k}, X^{*}) + 2(\gamma^{k})^{2} \left( M^{2}_{w} + M^{2}_{F} \right).
    \end{equation}
    When $k$ is sufficiently large, we have $0\leq 2\alpha\gamma^{k}\leq 1$. By Lemma \ref{three-useful-lemma}-(ii), we may claim the a.s. convergence (i-1). By taking unconditional expectations on both sides of (\ref{SSGR-QG-conditional}), it follows that
    \begin{equation}\label{SSGR-QG-unconditional}
        \mathbb{E}[ \mathrm{dist}^{2} (x^{k+1}, X^{*}) ] \leq (1-2\alpha\gamma^{k}) \, \mathbb{E}[ \mathrm{dist}^{2} (x^{k}, X^{*}) ] + 2(\gamma^{k})^{2} \left( M^{2}_{w} + M^{2}_{F} \right).
    \end{equation}
    By invoking Lemma \ref{three-useful-lemma}-(iii), we have that $\mathbb{E}[ \mathrm{dist}^{2} (x^{k}, X^{*}) ] \leq \tfrac{Q(\gamma^{0})}{k}$ for $k\geq 1$, where $Q(\gamma^{0}) \triangleq \max \{ \tfrac{2(\gamma^{0})^{2}(M^{2}_{w} + M^{2}_{F})}{2\alpha\gamma^{0}-1}, \mathrm{dist}^{2} (x^{0}, X^{*}) \}$. This completes the proof of (i-2).

    (ii) We define $\e^{k} \triangleq \mathbb{E}[ \mathrm{dist}^{2} (x^{k}, X^{*}) ]$ and $C \triangleq 2\left( M^{2}_{w} + M^{2}_{F} \right)$. By \eqref{SSGR-QG-unconditional} and the fact that $\gamma^{k} \equiv \gamma$ for all $k\geq 0$, we may obtain that $ \e^{k+1} \leq (1-2\alpha \gamma) \e^{k} + \gamma^{2} C$. Since $\gamma \in (0, \tfrac{1}{2\alpha})$ such that $v \triangleq 1-2\alpha \gamma \in (0, 1)$, we have
    \begin{align}\label{constant-log-factor}
	    \e^{k+1} & \leq v \e^{k} + \gamma^{2} C \leq v^2 \e^{k-1} + v \gamma^2 C + \gamma^2 C \notag \\
		& \leq v^{k+1} \e^0 + \gamma^2 C (1 + v + \cdots + v^{k}) \leq v^{k+1} \e^0 + \gamma^2 C \tfrac{1}{1-v} = v^{k+1} \e^{0} + \tfrac{\gamma C}{2\alpha}.
    \end{align}
    
    \noindent (ii-1) For any $k > \lceil \tfrac{1}{2\alpha\gamma}\log(\tfrac{1}{\gamma}) \rceil$, we have
    \begin{equation*}
        v^{k} = (1 - 2\alpha \gamma)^{k} \leq (1 - 2\alpha \gamma)^{\tfrac{1}{2\alpha\gamma}\log(\tfrac{1}{\gamma})} \leq (e^{-2\alpha})^{\tfrac{1}{2\alpha}\log(\tfrac{1}{\gamma})} = \gamma,
    \end{equation*}
    where the last inequality follows from $\log(\tfrac{1}{\gamma}) > 0$ and the fact that $(1-x/n)^n \le e^{-x}$ when $0\leq x < n$. Therefore, we have
    \begin{equation*}
        v^{k} \leq \gamma \implies \e^{k} \leq v^{k} \e^{0} + \tfrac{\gamma C}{2\alpha} = \mathcal{O}(\gamma).
    \end{equation*}
    Then we complete the proof.
    
    \noindent (ii-2) Consider any fixed $K > \tilde{C} \triangleq \tfrac{C}{4\alpha^2}$. We observe from \eqref{constant-log-factor} that if $\e^{0} \le 1$, then we have $\e^{K+1} \leq v^{K+1} + \tfrac{\gamma C}{2\alpha}$, while if $\e^{0} > 1$, it follows that $\e^{K+1} \leq v^{K+1} \e^{0} + \tfrac{\gamma C}{2\alpha} \e^{0}$. Consequently, we have that
    \begin{equation}\label{e_K+1_bound}
         \e^{K+1} \leq \underbrace{(v^{K+1}  + \tfrac{\gamma C}{2\alpha} )}_{\triangleq h_{K}(\gamma)} \max\left\{ \e^0,1\right\} \,\, \mbox{ and } \,\, h_{K}(\gamma) = (1-2\alpha\gamma)^{K+1} + \tfrac{\gamma C}{2\alpha}.
    \end{equation}
    We now examine $h_{K}(\gamma^{\ast})$ as follows. For any given $K > \tilde{C}$, we have that $(\frac{\tilde{C}}{K+1})^{1/K} \in (0, 1)$ and $\gamma^{\ast} \in (0, \tfrac{1}{2\alpha})$. Then we have
    \allowdisplaybreaks
    \begin{align*}
        h_{K}(\gamma^{\ast}) &= (1 - 2\alpha\gamma^{\ast})^{K+1}  + \tfrac{\gamma^{\ast} C}{2\alpha} = ( \tfrac{\tilde{C}}{K+1} )^{(K+1)/K} + \tilde{C} \left( 1 - (\tfrac{\tilde{C}}{K+1})^{1/K} \right) \\
        &= \underbrace{\tilde{C} ( 1-\tfrac{K}{K+1} ) (\tfrac{\tilde{C}}{K+1})^{1/K}}_{\textrm{Term (a)}} \,+\, \underbrace{\tilde{C} \left(1 - (\tfrac{\tilde{C}}{K+1})^{1/K}\right)}_{\textrm{Term (b)}}. 
    \end{align*}
    We now proceed to show that $h_{K}(\gamma^{\ast}) = \mathcal{O}(\tfrac{\log(K)}{K})$. We first examine the term $(\tfrac{\tilde{C}}{K+1})^{1/K}$. Since $\log(\tilde{C}) < \tilde{C} < K$, we have that $\log(\tilde{C})/K < 1$. Then by applying the inequality $e^x \leq 1+\frac{x}{1-x} ~ (\forall x < 1)$ where we select $x = \log(\tilde{C})/K < 1$ yields
    \allowdisplaybreaks
    \begin{align}\label{log-factor-proof-eqn1}
        \tilde{C}^{1/K} =  e^{\log(\tilde{C})/K} \leq 1 + \tfrac{\log(\tilde{C})}{K-\log(\tilde{C})}.
    \end{align} 
    Consequently, Term (a) can be bounded as follows. 
    \begin{align*}
        \textrm{Term (a)} = \tilde{C} \tfrac{1}{K+1} \tilde{C}^{1/K}{\underbrace{( \tfrac{1}{K+1} )^{1/K}}_{<1}} < \tilde{C} \tfrac{1}{K+1} \tilde{C}^{1/K} \overset{\eqref{log-factor-proof-eqn1}}{\leq} \tilde{C} \left( \tfrac{1}{K+1} + \tfrac{\log(\tilde{C})}{(K-\log(\tilde{C}))(K+1)} \right).
    \end{align*}
    Similarly, Term (b) can be bounded in a similar fashion. Observe that since $K > \tilde{C}$, we have that $\log(K+1) > \log(\tilde{C})$. By applying the inequality $1 - e^{-x} \leq x ~ (\forall x \geq 0)$ where we choose $x = \tfrac{\log(K+1) - \log(\tilde{C})}{K} \geq 0$ yields
    \begin{align*}
        \textrm{Term (b)} = \tilde{C} ( 1 - e^{\log(\frac{\tilde{C}}{K+1})/K} ) = \tilde{C} ( 1 - e^{-\frac{\log(K+1) - \log(\tilde{C})}{K}} ) \leq \tilde{C} \left(\tfrac{\log(K+1) - \log(\tilde{C})}{K}\right).
    \end{align*}
    By combining the above two bounds of Terms (a) and (b), we have that
    \begin{equation}\label{h_K_gamma_bound}
        h_{K}(\gamma^{\ast}) \leq \tilde{C} \left( \tfrac{1}{K+1} + \tfrac{\log(\tilde{C})}{(K-\log(\tilde{C}))(K+1)} \right) + \tilde{C} \left(\, \tfrac{\log(K+1) - \log(\tilde{C})}{K} \, \right)= \mathcal{O}\left(\tfrac{\log(K)}{K} \right).
    \end{equation}
    Therefore, for any given $K > \tilde{C}$, by choosing $\gamma = \gamma^{*}$, we know from \eqref{e_K+1_bound} and \eqref{h_K_gamma_bound} that $\e^{K+1} \leq \mathcal{O} (\tfrac{\log(K)}{K})$. 
\end{proof}

\begin{remark}
    We make two remarks here. First, unlike (ii-1), the claim in (ii-2) does not hold asymptotically since the constant stepsize $\gamma^{*}$ in (ii-2) depends on $K$. In fact, we may observe that $\gamma^{*}(K)$ minimizes $h_{K}(\gamma)$ over the open interval $(0, \tfrac{1}{2\alpha})$. Second, we discuss the relation between (ii-1) and (ii-2). By setting $\tfrac{\log(K)}{K} = \gamma$ in (ii-2), it follows from the Lambert function \cite{veberic-2010} that, for sufficiently small $\gamma$, the larger solution (associated with the equation $we^{w} = \gamma$) satisfies $ K = \mathcal{O}(\frac{1}{\gamma}\log\frac{1}{\gamma})$. This agrees with the minimum value of $k$ we established in (ii-1) up to the order of magnitude. $\hfill \Box$
\end{remark}

\subsection{A linear rate of SGR under \ref{QS}}\label{Sec-3.2}

In this subsection, we focus on the deterministic regime and establish a linear rate under the \ref{QS} property, drawing inspiration from recent results on weakly convex optimization \cite{davis-drusvyatskiy-macphee-paquette-2018}. This requires significant extension to contend with game-theoretic generalizations and rely on leveraging tools from the VI theory. In contrast with \cite{davis-drusvyatskiy-macphee-paquette-2018}, our convergence results are established without Lipschitz continuity or weak monotonicity assumptions. To this end, we consider the synchronous gradient-response (\textbf{SGR}) scheme for a given $x_0 \in X$:
\begin{align}\label{SGR}
    x^{k+1}_{i} = \Pi_{X_{i}} \left[ x^{k}_{i} - \gamma^{k}\nabla_{x_{i}}f_{i}(x^{k}_{i}, x^{k}_{-i}) \right],~ \forall i\in [N]. \tag{SGR}
\end{align}
The convergence analysis relies on (i) geometrically decaying stepsizes $\gamma^{k} = \gamma^{0}q^{k}$ for some $\gamma^{0}>0$ and $q\in (0,1)$; and (ii) a suitable initialization $\mathrm{dist}\:(x^{0}, X^{*}) \leq D$ for some \emph{known} $D>0$. 

\begin{remark}
    The stochastic extension is not straightforward. Davis et al.
    \cite{davis-drusvyatskiy-charisopoulos-2024} extended their early work \cite{davis-drusvyatskiy-macphee-paquette-2018} to the stochastic regime by a restart technique, which is distinct from our single-step setup here. We leave this for our future research. $\hfill \Box$
\end{remark}

Similar to \cite[Theorem 6.1]{davis-drusvyatskiy-macphee-paquette-2018}, we may derive a linear rate of \textbf{SGR} under \ref{QS}.

\begin{theorem}[SGR under \ref{QS}]\label{SGR-QS}\em
    Consider the deterministic specialization of the $N$-player game $\mathcal{G}({\bf f}, X, \bxi)$. Suppose that assumptions $\mathrm{(A3)}$ and $\mathrm{(B3)}$ hold, and the \ref{QS} property is satisfied. Consider the sequence $\{x^k\}_{k=0}^{\infty}$ generated by \textbf{SGR} with geometrically decaying stepsizes $\gamma^{k} = \gamma^{0}q^{k}$ for some $\gamma^{0}>0$ and $q \in (0, 1)$. Suppose that $\tfrac{\beta}{M_{F}} \leq \sqrt{\tfrac{1}{2-\lambda}} < 1$ for some $\lambda \in (0, 1)$, where $\beta > 0$ is the \ref{QS} coefficient and $M_{F}$ is the bound in assumption $\mathrm{(B3)}$. We set $(\gamma^{0}, q)$ and initialization as
    \begin{equation*}
        \gamma^{0} > 0, \, q \triangleq \sqrt{1-(1-\lambda)\tfrac{\beta^{2}}{M^{2}_{F}}}\in (0, 1), ~ \textrm{and} ~ \mathrm{dist}\:(x^{0}, X^{*}) \leq D \triangleq \tfrac{\gamma^{0} M_{F}}{\tfrac{\beta}{M_{F}} - \sqrt{\tfrac{\beta^{2}}{M^{2}_{F}}-(1-q^{2})}}.
    \end{equation*}
    Then for any $k\geq 0$, we have
    \begin{equation}\label{SGR-QS-main-result}
        \mathrm{dist}^{2}(x^{k}, X^{*}) \leq \max \left\{ \tfrac{(\gamma^{0})^{2} M^{4}_{F}}{\beta^{2}}, \mathrm{dist}^{2}(x^{0}, X^{*}) \right\} \left( 1 - (1-\lambda) \tfrac{\beta^{2}}{M^{2}_{F}} \right)^{k}.
    \end{equation}
\end{theorem}
\begin{proof}
    Akin to the derivation of Lemma \ref{SSGR-recursions}-(ii), we may obtain that
    \begin{equation*}
        \mathrm{dist}^{2} (x^{k+1}, X^{*}) \leq \mathrm{dist}^{2} (x^{k}, X^{*}) -2\gamma^{k}(x^{k}-x^{k, \ast})^{\top} F(x^{k}) + (\gamma^{k})^{2} M^{2}_{F}.
    \end{equation*}
    By invoking the \ref{QS} property, it leads to
    \begin{equation*}
        \mathrm{dist}^{2} (x^{k+1}, X^{*}) \leq \mathrm{dist}^{2} (x^{k}, X^{*}) -2 \beta \gamma^{k} \mathrm{dist}\:(x^{k}, X^{*}) + (\gamma^{k})^{2} M^{2}_{F}.
    \end{equation*}
    By dividing both sides by $M^{2}_{F}$ and using $\gamma^{k} = \gamma^{0} q^{k}$, it follows that
    \begin{equation}\label{SGR-QS-eqn1}
        \left( \tfrac{\mathrm{dist}\:(x^{k+1}, X^{*})}{M_{F}} \right)^{2} \leq \left( \tfrac{\mathrm{dist}\:(x^{k}, X^{*})}{M_{F}} \right)^{2} - \tfrac{2\beta\gamma^{0} q^{k}}{M_{F}}  \tfrac{\mathrm{dist}\:(x^{k}, X^{*})}{M_{F}} + (\gamma^{0})^{2} q^{2k}.
    \end{equation}
    
    We prove the result by induction. When $k = 0$, we can see that \eqref{SGR-QS-main-result} holds trivially. Suppose that \eqref{SGR-QS-main-result} holds for some positive $k$. It remains to show that the result \eqref{SGR-QS-main-result} also holds for $k + 1$. We define the constant
    \begin{equation*}
        E \triangleq \max \left\{ \tfrac{\gamma^{0} M_{F}}{\beta}, \tfrac{\mathrm{dist} \: (x^{0}, X^{*})}{M_{F}} \right\}.
    \end{equation*}
    Then we may rewrite \eqref{SGR-QS-main-result} equivalently as $\tfrac{\mathrm{dist}\:(x^{k}, X^{*})}{M_{F}} \leq E q^{k}$. Hence, there exists some $R\in [0, E]$ such that $\tfrac{\mathrm{dist}\:(x^{k}, X^{*})}{M_{F}} = R q^{k}$. We proceed to show that $\tfrac{\mathrm{dist}\:(x^{k+1}, X^{*})}{M_{F}} \leq E q^{k+1}$ holds. By \eqref{SGR-QS-eqn1}, we obtain that
    \begin{align}
        \left( \tfrac{\mathrm{dist}\:(x^{k+1}, X^{*})}{M_{F}} \right)^{2} &\leq R^{2} q^{2k} -2 \tfrac{\beta}{M_{F}} \gamma^{0} R q^{2k} + (\gamma^{0})^{2} q^{2k} \notag \\
        &\leq \max_{R\in [0, E]} \left\{ R^{2} q^{2k} -2 \tfrac{\beta}{M_{F}} \gamma^{0} R q^{2k} + (\gamma^{0})^{2} q^{2k} \right\} \notag \\
        &= q^{2k} \cdot \max \left\{ (\gamma^{0})^{2}, E^{2} -2 \tfrac{\beta}{M_{F}} \gamma^{0} E + (\gamma^{0})^{2} \right\}, \label{SGR-induction}
    \end{align}
    where the last equality is due to the fact that the maximum of a univariate convex quadratic function is attained either at $R = 0$ or $R = E$. It suffices to show that
    \begin{equation*}
        \mathrm{(a)} \; (\gamma^{0})^{2} \leq E^{2}q^{2}; \; \textrm{ and } \; \mathrm{(b)} \; E^{2} -2 \tfrac{\beta}{M_{F}} \gamma^{0} E + (\gamma^{0})^{2} \leq E^{2}q^{2}.
    \end{equation*}
    Observe that if (a) and (b) are true, then the required inductive claim holds by invoking \eqref{SGR-induction} and noting
    \begin{equation*}
        \max \left\{ (\gamma^{0})^{2}, E^{2} -2 \tfrac{\beta}{M_{F}} \gamma^{0} E + (\gamma^{0})^{2} \right\} \leq E^2 q^2.
    \end{equation*}
    The first requirement $\mathrm{(a)}$ can be proven by noting that
    \begin{equation*}
        \tfrac{\beta^{2}}{M^{2}_{F}} \overset{(*)}{\leq} 1 - (1 - \lambda) \tfrac{\beta^{2}}{M^{2}_{F}} = q^{2} \implies E^{2} \geq \tfrac{(\gamma^{0})^{2} M^{2}_{F}}{\beta^{2}} \geq \tfrac{(\gamma^{0})^{2}}{q^{2}},
    \end{equation*}
    where $(*)$ is a consequence of our assumption that $\tfrac{\beta}{M_{F}} \leq \sqrt{\tfrac{1}{2-\lambda}}$. Next we show that  $\mathrm{(b)}$ holds. We may rewrite $\mathrm{(b)}$ as a quadratic inequality, given by
    \begin{equation}\label{SGR-QS-eqn2}
        (1 - q^{2}) E^{2} -2 \tfrac{\beta}{M_{F}} \gamma^{0} E + (\gamma^{0})^{2} \leq 0.
    \end{equation}
    The discriminant $\Delta = 4(\gamma^{0})^{2}( \tfrac{\beta^{2}}{M^{2}_{F}} - (1-q^{2}) ) > 0$ since $\tfrac{\beta^{2}}{M^{2}_{F}} > 1-q^{2}$. We may then derive two distinct roots defined as
    \begin{equation*}
        E_{1} \triangleq \tfrac{\gamma^{0}}{\tfrac{\beta}{M_{F}}+\sqrt{\tfrac{\beta^{2}}{M^{2}_{F}}-(1-q^{2})}} < E_{2} \triangleq \tfrac{\gamma^{0}}{\tfrac{\beta}{M_{F}}-\sqrt{\tfrac{\beta^{2}}{M^{2}_{F}}-(1-q^{2})}}.
    \end{equation*}
    To ensure that \eqref{SGR-QS-eqn2} holds, we need to show that $E_{1} \leq E \leq E_{2}$ holds. The fact $E \geq E_{1}$ is immediate by definition, $E \geq \tfrac{\gamma^{0}}{\beta/M_{F}} \geq E_{1}$. We proceed to show that $E \leq E_{2}$ is also true. Indeed, if $E = \tfrac{\gamma^{0}}{\beta/M_{F}}$, then $E \leq E_{2}$ also holds. If $E = \tfrac{\mathrm{dist} \: (x^{0}, X^{*})}{M_{F}}$, then $E \leq E_{2}$ also holds by our initialization assumption. Therefore, the inductive claim holds for $k + 1$ and the proof is complete.
\end{proof}

\begin{remark}
    We make two remarks here. First, given $\beta>0$ and sufficiently large $M_{F}>0$, we may always ensure the rate factor is strictly less than one and obtain a linear rate. However, raising $M_{F}>0$ leads to a significant growth in the constant factor in the rate claim. Second, unlike \cite[Theorem 6.1]{davis-drusvyatskiy-macphee-paquette-2018}, we do not need to impose an upper bound requirement on $\gamma^{0}>0$ to ensure that the discriminant is nonnegative. By leveraging this advantage, the initialization condition can be made to hold trivially under compactness, thereby yielding a \emph{global} linear convergence rate, which is an encouraging result. We formalize this result below. $\hfill \Box$
\end{remark}

\begin{corollary}[Global linear convergence]\em
    Consider the same setting as Theorem \ref{SGR-QS}. Suppose $X$ is additionally compact with diameter $D_{X} > 0$. By setting $\gamma^{0} = \tfrac{D_{X}}{M_{F}}$, we have for any $k\geq 0$,
    \begin{equation}\label{SGR-QS-global-linear}
        \mathrm{dist}^{2}(x^{k}, X^{*}) \leq D^{2}_{X} \left( \tfrac{M^{2}_{F}}{\beta^{2}} \right) \left( 1 - (1-\lambda) \tfrac{\beta^{2}}{M^{2}_{F}} \right)^{k}. 
    \end{equation}
\end{corollary}
\begin{proof}
    By setting $\gamma^{0} = \tfrac{D_{X}}{M_{F}}$, we know the initialization condition
    \begin{equation*}
        \mathrm{dist} \: (x^{0}, X^{*}) \leq \tfrac{\gamma^{0} M_{F}}{\tfrac{\beta}{M_{F}} - \sqrt{\tfrac{\beta^{2}}{M^{2}_{F}}-(1-q^{2})}} \leq D_{X}
    \end{equation*}
    holds since $\tfrac{\beta}{M_{F}} < 1$. Similarly, we also have that $\tfrac{(\gamma^{0})^{2}M^{4}_{F}}{\beta^{2}} = \tfrac{D^{2}_{X}}{\beta^{2}/M^{2}_{F}} > D^{2}_{X} \geq \mathrm{dist}^{2}(x^{0}, X^{\ast})$. Then the final result \eqref{SGR-QS-global-linear} follows immediately from \eqref{SGR-QS-main-result}.
\end{proof}

In practice, however, the diameter $D_{X}$ is often unavailable, and verifying the initialization condition is challenging since $X^{*}$ is not known a priori. As discussed in subsection \ref{Sec-2.2}, \ref{QS} implies \ref{QG} under the compactness of $X$, which suggests the possibility of developing the \emph{two-stage} SGR (\textbf{2-SGR}) scheme. Specifically, in the first stage, we implement a sublinearly convergent scheme, whose global sublinear rate is established in Theorem \ref{SSGR-QG}, with the parameters prescribed as
\begin{align}\label{sub_rate} 
    \mathrm{dist}^{2}(x^{k}, X^{*})\leq \tfrac{Q}{k}, 
\end{align} 
for some known $Q > 0$ under \ref{QG}. Once the initialization condition is satisfied for some large $K^{+} > 0$, we enter the second stage and then switch to geometrically decaying stepsizes to achieve the \emph{locally} linear convergence. Note that such a two-stage scheme does not necessitate knowing $X^{\ast}$. We present the \textbf{2-SGR} scheme in Algorithm \ref{2SGR} and test this idea in the numerical section.

\begin{algorithm}[htb]
    \caption{~Two-Stage SGR (\textbf{2-SGR}) Scheme}
    Set $k = 0$. Given positive scalars $\beta, M_F, Q, q, D, \gamma^0, K^+$, where $\beta < M_F$, $Q > 0$ satisfies \eqref{sub_rate}, and $q$ and $D$ are specified in Theorem \ref{SGR-QS}, $\gamma^{0} > 0$, $K^{+} \triangleq \lfloor Q/D \rfloor$. Initialize $x^{0}\in X$ and stepsizes $\{\gamma^{k}\}_{k\geq 0}$. Iterate until $k \geq K$. \\
    \textbf{Strategy update.} For every $i\in [N]$, we update
    \begin{align*}
        x^{k+1}_{i}  = \Pi_{X_{i}} \left[ x^{k}_{i} - \gamma^{k}\nabla_{x_{i}}f_{i}(x^{k}_{i}, x^{k}_{-i}) \right], \mbox{ where } \gamma^{k} = 
        \begin{cases}
            \gamma^{0}/k, & \textrm{if }\, k \leq K^{+} \,\, \mbox{(Stage I)} \\
            \gamma^{0}q^{k-K^{+}}, \hspace{-0.1in} & \textrm{if }\, k > K^+ \,\, \mbox{(Stage II)}
        \end{cases}
    \end{align*}
    \textbf{Return.} $x^{K}$ as final estimate.
    \label{2SGR}
\end{algorithm}

\section{An asynchronous gradient-response scheme and NE computation}\label{Sec-4}

This section is divided into two parts. In subsection \ref{Sec-4.1}, we study the convergence guarantees of \textbf{SAGR} under \ref{AA} and \ref{QG} properties. In subsection \ref{Sec-4.2}, we show that one can obtain convergence guarantees for computing an NE (rather than QNE) under the additional potentiality assumption,  despite the presence of nonconvexity.

\subsection{SAGR under \ref{AA} and \ref{QG}}\label{Sec-4.1}

The \textbf{SAGR} scheme is presented in Algorithm \ref{SAGR}, where at each iteration $k$, only the randomly selected player $i(k)$ updates her strategy. Recall that in game-theoretic settings, the  qualifier ``asynchronous'' is often adopted to capture random player selection, either in gradient or best-response schemes~\cite{koshal-nedic-shanbhag-2016, lei-shanbhag-2020, pang-razaviyayn-2016}. In this subsection, we impose the following assumption, which is the asynchronous counterpart of Assumption $\mathrm{B}$. We denote the histories of \textbf{SAGR} by $\mathcal{F}_{k} \triangleq \sigma \{ x^{0}, \cup_{t=0}^{k-1} \{ i(t), \nabla_{x_{i(t)}} \tilde{f}_{i(t)}(x^{t},\xi_{i(t)}^{t}) \} \}$ and $\mathcal{F}_{k+1/2} \triangleq \mathcal{F}_{k} \cup \{i(k)\}$.

\vspace{5pt}

\emph{Assumption $\mathrm{C}$.} (C1) For any $k \geq 0$, we have $\mathbb{E}[ w_{i(k)}^{k} \!\mid\! \mathcal{F}_{k+1/2}] = 0$, where $w^{k}_{i(k)} = \nabla_{x_{i(k)}}\Tilde{f}_{i(k)}(x^{k}_{i(k)}, x^{k}_{-i(k)}, \xi^{k}_{i(k)}) - \nabla_{x_{i(k)}} \mathbb{E}[ \Tilde{f}_{i(k)}(x^{k}_{i(k)}, x^{k}_{-i(k)}, \bxi) \!\mid\! \mathcal{F}_{k+1/2} ]$. (C2) For any $k \geq 0$, the second moment bound $\mathbb{E}[ \|w^{k}_{i(k)}\|^2 \!\mid\! \mathcal{F}_{k+1/2} ] \leq M^{2}_{w, i(k)}$ holds for some $M_{w, i(k)} > 0$. (C3) For any $i \in [N]$, the bound $\| \nabla_{x_{i}} f_{i}(x_{i}, x_{-i}) \| \leq M_{F, i}$ holds for any $x\in X$ and some $M_{F, i} > 0$. $\hfill \Box$

\vspace{5pt}

\begin{algorithm}[htb]
    \caption{~Stochastic Asynchronous GR ({\bf SAGR}) Scheme}
    Set $k = 0$. Initialize $x^{0}\in X$ and stepsizes $\{\gamma^{k}\}_{k\geq 0}$. Iterate until $k\geq K$. 
    \\\textbf{Player selection.} Pick player $i(k)$ with probability $p_{i(k)} > 0$ where $\sum_{i=1}^{N} p_{i} = 1$.
    \\\textbf{Strategy update.} Player $i(k)$ updates strategy as follows:
    \begin{equation*}
        x^{k+1}_{i(k)} \triangleq \Pi_{X_{i(k)}} \left[ x^{k}_{i(k)} - \gamma^{k}_{i(k)}\nabla_{x_{i(k)}}\Tilde{f}_{i(k)}(x^{k}_{i(k)}, x^{k}_{-i(k)}, \xi_{i(k)}^{k}) \right].
    \end{equation*}
    %where $\xi_{i(k)}^{k}$ denotes the i.i.d. random realization of $\bxi$.\\
    \textbf{Return.} $x^{K}$ as final estimate.
    \label{SAGR}
\end{algorithm}

In this paper, we focus on the case where a single player is selected at random and updates her strategy at each iteration. Nevertheless, our results can be extended to allow simultaneous updates by a group of players, as well as delayed updates.  The \textbf{SAGR} recursions are more complicated than \textbf{SSGR} recursions. Naturally, deterministic stepsizes are harder to prescribe, leading to random stepsizes used in \textbf{SAGR} as presented in \cite{koshal-nedic-shanbhag-2016, nedic-2011}. To this end, for any $i\in [N]$ and any $k\geq 0$, we define the randomized stepsize $\gamma^k_i$ as
\begin{equation}\label{asyn_gamma}
    \gamma^{k}_{i} \triangleq \begin{cases}
        1/\Gamma_{k}(i), &\textrm{if } \, \Gamma_{k}(i)\neq 0,\\
        0, &\textrm{if } \, \Gamma_{k}(i) = 0,
    \end{cases}
\end{equation}
where $\Gamma_{k}(i(k))$ denotes the number of updates that player $i(k)$ (the player chosen at time $k$) has performed until and including the $k$th iteration. In the first iteration $k = 0$, one player, namely $i(0)$ is selected, implying that $\Gamma_{0}(i(0)) = 1$ while $\Gamma_{0}(j) = 0$ for all $j \ne i(0)$. Since player $j \ne i(0)$ does not update during iteration $0$, $\gamma_j^0 = 0$. For sufficiently large $k$ (each player has been selected at least once), we must have $\gamma^{k}_{i} = 1/\Gamma_{k}(i)$. This leads to an additional source of uncertainty, complicating the analysis. Similar to Lemma \ref{SSGR-recursions}, we next derive two crucial \textbf{SAGR} recursions. Again, note that $\Pi_{X^{*}}[x^{k}]$ may not be unique.

\begin{lemma}[SAGR recursions]\label{SAGR-recursions}\em
    Consider the $N$-player game $\mathcal{G}({\bf f},X,\bxi)$. Suppose that Assumptions $\mathrm{A}$ and $\mathrm{C}$ hold. Consider the sequence of iterates $\{x^{k}\}_{k=0}^{\infty}$ generated by \textbf{SAGR} and suppose that the stepsizes $\{ \gamma^{k}_{i(k)} \}_{k\geq 0}$ are defined as in \eqref{asyn_gamma}. Let $p_{i} > 0$ denote the probability that player $i$ is selected. Then the following hold.

    \noindent (i) Let $x^{*}\in X^{*}$ be any QNE. Then for any $k \geq 0$, we have
    \allowdisplaybreaks
    \begin{align*}
        \mathbb{E} [ \| x^{k+1} - x^{\ast} \|^2 \mid \mathcal{F}_{k} ] &\leq (1 + \max_{i} p_i|\gamma^{k}_{i}-\tfrac{1}{kp_{i}}|)\| x^{k} - x^{\ast} \|^2 + 2 \sum_{i=1}^{N} p_i(\gamma^{k}_{i})^{2} M^{2}_{w, i} \notag \\
        & ~~~+ \sum_{i=1}^{N} p_i( 2 (\gamma^{k}_{i})^{2} + |\gamma^{k}_{i}-\tfrac{1}{kp_{i}}|) M^{2}_{F, i} - \tfrac{2}{k}(x^{k}-x^{\ast})^{\top}F(x^{k}).
    \end{align*}

    \noindent (ii) Let $x^{k, \ast} \in \Pi_{X^{*}}[x^{k}]$. Then for any $k \geq 0$, we have
    \allowdisplaybreaks
    \begin{align*}
        \mathbb{E} [ \mathrm{dist}^{2}(x^{k+1}, X^{\ast}) \mid \mathcal{F}_{k} ] &\leq (1 + \max_{i} p_i|\gamma^{k}_{i}-\tfrac{1}{kp_{i}}|) \: \mathrm{dist}^{2}(x^{k}, X^{\ast}) + 2 \sum_{i=1}^{N} p_i(\gamma^{k}_{i})^{2} M^{2}_{w, i} \\
        & ~~~+ \sum_{i=1}^{N} p_i( 2 (\gamma^{k}_{i})^{2} + |\gamma^{k}_{i}-\tfrac{1}{kp_{i}}|) M^{2}_{F, i} - \tfrac{2}{k}(x^{k}-x^{k, \ast})^{\top}F(x^{k}).
    \end{align*}
\end{lemma}
\begin{proof}
    (i) Similar to the proof of Lemma \ref{SSGR-recursions}-(i), we can obtain that
    \allowdisplaybreaks
    \begin{align*}
        \mathbb{E}[ & \| x^{k+1}_{i(k)}-x^{\ast}_{i(k)} \|^2 \mid \mathcal{F}_{k+1/2} ] \leq \| x^{k}_{i(k)}-x^{\ast}_{i(k)} \|^2 \\
        & - 2\gamma^{k}_{i(k)}(x^{k}_{i(k)}-x^{\ast}_{i(k)})^{\top}\nabla_{x_{i(k)}}\mathbb{E}[\Tilde{f}_{i(k)}(x^{k}_{i(k)},x^{k}_{-i(k)},\bxi)] \\
        & + 2(\gamma^{k}_{i(k)})^{2}\mathbb{E}[\|w^{k}_{i(k)}\|^{2} \mid \mathcal{F}_{k+1/2} ] + 2(\gamma^{k}_{i(k)})^{2}\|\nabla_{x_{i(k)}}\mathbb{E}[\Tilde{f}_{i(k)}(x^{k}_{i(k)},x^{k}_{-i(k)},\bxi)]\|^{2}.
    \end{align*}
    By using $\gamma^{k}_{i(k)} = (\gamma^{k}_{i(k)}-\tfrac{1}{kp_{i(k)}}) + \tfrac{1}{kp_{i(k)}}$ and invoking Assumption $\mathrm{C}$, we obtain
    \allowdisplaybreaks
    \begin{align*}
        \mathbb{E} & [\| x^{k+1}_{i(k)}-x^{\ast}_{i(k)} \|^2 \mid \mathcal{F}_{k+1/2} ] \leq \| x^{k}_{i(k)}-x^{\ast}_{i(k)} \|^2  + 2(\gamma^{k}_{i(k)})^{2} \mathbb{E}[\|w^{k}_{i(k)}\|^{2} \mid \mathcal{F}_{k+1/2} ]\\
        & + 2(\gamma^{k}_{i(k)})^{2}\|\nabla_{x_{i(k)}}\mathbb{E}[{\Tilde{f}_{i(k)}(x^{k}_{i(k)},x^{k}_{-i(k)},\bxi)]\|^{2}} \\
        & -\tfrac{2}{kp_{i(k)}}(x^{k}_{i(k)}-x^{\ast}_{i(k)})^{\top}\nabla_{x_{i(k)}}\mathbb{E}[\Tilde{f}_{i(k)}(x^{k}_{i(k)},x^{k}_{-i(k)},\bxi)] \\
        & - 2(\gamma^{k}_{i(k)}-\tfrac{1}{kp_{i(k)}})(x^{k}_{i(k)}-x^{\ast}_{i(k)})^{\top}\nabla_{x_{i(k)}}\mathbb{E}[\Tilde{f}_{i(k)}(x^{k}_{i(k)},x^{k}_{-i(k)},\bxi)] \\
        & \leq (1 + | \gamma^{k}_{i(k)}-\tfrac{1}{kp_{i(k)}} |) \| x^{k}_{i(k)}-x^{\ast}_{i(k)} \|^2 + 2(\gamma^{k}_{i(k)})^{2} {M^{2}_{w, i(k)}}  + 2(\gamma^{k}_{i(k)})^{2} {M^{2}_{F, i(k)}} \\
        & + |\gamma^{k}_{i(k)}-\tfrac{1}{kp_{i(k)}}| {M^{2}_{F, i(k)}} 
        -\tfrac{2}{kp_{i(k)}}(x^{k}_{i(k)}-x^{\ast}_{i(k)})^{\top}\nabla_{x_{i(k)}}\mathbb{E}[\Tilde{f}_{i(k)}(x^{k}_{i(k)},x^{k}_{-i(k)},\bxi)].
    \end{align*}
    Consequently, it follows that
    \allowdisplaybreaks
    \begin{align*}
        & \mathbb{E}[ \|x^{k+1}_i - x^\ast_i\|^2 \mid \mathcal{F}_{k} ] \\
        =\: & p_i \: \mathbb{E}[ \|x^{k+1}_i - x^\ast_i\|^2 \mid \mathcal{F}_{k} \cup \{i(k) = i\} ] + (1-p_i) \: \mathbb{E}[ \|x^{k+1}_i - x^\ast_i\|^2 \mid \mathcal{F}_{k} \cup \{i(k) \neq i\} ] \\
        =\: & p_i \: \mathbb{E}\left[ \|x^{k+1}_i - x^\ast_i\|^2 \mid \mathcal{F}_{k+1/2} \right] + (1-p_i) \: \|x^{k}_i - x^\ast_i\|^2 \\
        \leq\: & (1 + p_i|\gamma^{k}_{i}-\tfrac{1}{kp_{i}}|) \| x^{k}_{i}-x^{\ast}_{i} \|^2 + 2 p_i(\gamma^{k}_{i})^{2} M^{2}_{w,i} + p_i( 2 (\gamma^{k}_{i})^{2} + |\gamma^{k}_{i}-\tfrac{1}{kp_{i}}|) M^{2}_{F,i} \\
        &-\tfrac{2}{k}(x^{k}_{i}-x^{\ast}_{i})^{\top}\nabla_{x_{i}}\mathbb{E}[\Tilde{f}_{i}(x^{k}_{i},x^{k}_{-i},\bxi)].
    \end{align*}
    By summing the inequality over $i\in [N]$, we obtain the desired result. Similarly, (ii) follows from the definition of $x^{k, \ast}$, choosing $x^\ast = x^{k, \ast}$, noting that $\mathrm{dist}^2 (x^{k}, X^{*}) = \|x^{k} - x^{k, \ast}\|^2$,  and invoking the claim that $\mathrm{dist}^{2} (x^{k+1}, X^{*}) \leq \|x^{k+1}-x^{k, \ast}\|^2$.
\end{proof}

Before discussing the convergence of \textbf{SAGR} under \ref{AA} and \ref{QG}, we first examine the asymptotics of $\gamma^{k}_{i}$ given in \eqref{asyn_gamma}. In contrast to \cite[Lemma 3]{nedic-2011} and \cite[Lemma 7]{koshal-nedic-shanbhag-2016}, where distributed settings over communication graphs are considered, we provide the centralized counterpart without an underlying graph; see the appendix for the proof.

\begin{lemma}\label{SAGR-stepsize-asymptotics}\em
    Let $\gamma^{k}_{i}$ be defined in (\ref{asyn_gamma}) for any $i\in [N]$ and any $k\geq 0$. Let $\Tilde{d} \in (0, \tfrac{1}{2})$ and $p_{i}$ denote the probability that player $i$ is selected. Define $p_{\min} \triangleq {\displaystyle \min_{i\in [N]}} p_{i} > 0$. Then there exists some sufficiently large $\Tilde{k} \triangleq \Tilde{k}(\Tilde{d}, N)$ such that for any $i\in [N]$ and any $k\geq \Tilde{k}$, with probability one: (i) $\gamma^{k}_{i} \leq \tfrac{2}{k p_{i}}$; (ii) $(\gamma^{k}_{i})^{2} \leq \tfrac{4}{k^{2} p^{2}_{\min}}$; and (iii) $| \gamma^{k}_{i} - \tfrac{1}{k p_{i}} | \leq \tfrac{3}{k^{3/2-\Tilde{d}} p^{2}_{\min}}$. $\hfill \Box$
\end{lemma}

Based on the above asymptotics lemma, we next derive the convergence guarantees of \textbf{SAGR} under \ref{AA} and \ref{QG}. 

\begin{theorem}[SAGR under \ref{AA}]\label{SAGR-AA}\em 
    Consider the $N$-player game $\mathcal{G}({\bf f},X,\bxi)$. Suppose that Assumptions $\mathrm{A}$ and $\mathrm{C}$ hold, and that the \ref{AA} property is satisfied. Consider the sequence $\{x^k\}_{k=0}^{\infty}$ generated by \textbf{SAGR}, where the stepsizes $\{\gamma^{k}\}_{k=0}^{\infty}$ are defined in \eqref{asyn_gamma}. If $X^{\ast}$ is additionally compact, then \emph{some subsequence} of $\{x^{k}\}_{k=0}^{\infty}$ converges to a QNE a.s.
\end{theorem}
\begin{proof}
    We may derive the recursion in Lemma \ref{SAGR-recursions}-(i). Let $\Tilde{d} \in (0, \tfrac{1}{2})$. By Lemma \ref{SAGR-stepsize-asymptotics}-(iii), for any random replication and an associated sufficiently large $\Tilde{k}(\omega)$, we have
\begin{align*}
    1 & = \mathbb{P}\left[ \omega\in \Omega \mid | \gamma_i^k-\tfrac{1}{kp_i} | \leq \tfrac{3}{k^{3/2-\Tilde{d}}p^{2}_{\min}} \textrm{ for } k \geq \Tilde{k}(\omega) \right] \\
    & = \mathbb{P}\left[ \omega\in \Omega \,\Bigg\vert\, \sum_{k=1}^{\infty}| \gamma_i^k-\tfrac{1}{k p_{i}}| \leq \sum_{k=1}^{\Tilde{k}(\omega)}|\gamma_i^k-\tfrac{1}{k p_{i}}| + \sum_{k=\Tilde{k}(\omega)+1}^{\infty} \tfrac{3}{k^{3/2-\Tilde{d}} p^{2}_{\min}} < \infty \right].
\end{align*}
In other words, $\sum_{k=1}^{\infty}| \gamma_i^k - \tfrac{1}{k p_{i}}| < \infty$ holds a.s. for any $i\in [N]$. Similarly, by Lemma \ref{SAGR-stepsize-asymptotics}-(ii), we can obtain $\sum_{k=1}^{\infty} (\gamma^{k}_{i})^{2} < \infty$ holds a.s. for any $i\in [N]$. Therefore, we have
\begin{equation*}
    \sum_{k=1}^{\infty} \max_{i} p_i| \gamma^{k}_{i}-\tfrac{1}{kp_{i}}| \leq \sum_{k=1}^{\infty} \max_{i} | \gamma^{k}_{i}-\tfrac{1}{kp_{i}}| < \infty \; \textrm{ and } \; \sum_{k = 1}^{\infty} p_i (\gamma^{k}_{i})^{2} \leq \sum_{k = 1}^{\infty} (\gamma^{k}_{i})^{2} < \infty.
\end{equation*}
The remaining part of the proof is similar to Theorem~\ref{SSGR-AA} and is omitted. 
\end{proof}

Only \emph{subsequential} a.s. convergence is available for \textbf{SAGR} scheme under \ref{AA}. We now present rate guarantees for \textbf{SAGR} under \ref{QG}. Before that, we derive a generalization of Chung's lemma \cite[Chapter 2.2]{polyak-1987}; see the appendix for the proof.

\begin{lemma}\label{Chung}\em
    Suppose for any $k$, $e_k\ge 0$ and $e^{k+1} \leq (1-\tfrac{C}{k}+\tfrac{A}{k^{1+t}}) e^{k} + \tfrac{B}{k^{t+1}}$ for any $k\geq 0$, where $A, B, C, t>0$.
    Then we have 
    \begin{equation*}
        e^{k} \begin{cases}
            \leq (A+B)(C-t)^{-1}k^{-t} + o\left( k^{-t} \right), &\textrm{if } C>t, \\
            = \mathcal{O}\left( k^{-C}\log{k} \right), &\textrm{if } C=t, \\
            = \mathcal{O}\left( k^{-C} \right), &\textrm{if } C<t.
        \end{cases}
    \end{equation*}
    for sufficiently large $k > 0$. $\hfill \Box$
\end{lemma}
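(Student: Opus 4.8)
The plan is to compare $\{e^k\}$ against the \emph{exact} product sequence generated by the multiplicative factor of the recursion, rather than against a bare power $k^{-p}$; this is what lets all three regimes be handled uniformly. Fix $k_0$ large enough that $0 < 1 - \tfrac{c}{k} + \tfrac{A}{k^{1+p}} < 1$ for every $k \ge k_0$, and set
\[
 a_k \; := \; \prod_{j=k_0}^{k-1}\Bigl( 1 - \tfrac{c}{j} + \tfrac{A}{j^{1+p}} \Bigr) \text{ for } k > k_0, \qquad a_{k_0} := 1 .
\]
The first step is to prove the sharp two-sided estimate $a_k = \Theta(k^{-c})$ — in fact that $\lim_{k\to\infty} k^{c} a_k =: \kappa$ exists in $(0,\infty)$. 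Writing $x_j := -\tfrac{c}{j} + \tfrac{A}{j^{1+p}} = \mathcal{O}(1/j)$, one has $\log(1+x_j) = x_j + \mathcal{O}(x_j^2)$; summing, $\sum_{j=k_0}^{k-1} x_j = -c\log k + \mathcal{O}(1)$ because $\sum_j \tfrac{1}{j} = \log k + \mathcal{O}(1)$ and $\sum_j \tfrac{A}{j^{1+p}}$ converges (this is where $p>0$ enters), while $\sum_j \mathcal{O}(x_j^2) = \mathcal{O}(1)$. Hence $\log a_k + c\log k$ converges, which is the claim.

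Next, put $b_k := e^k / a_k$ for $k \ge k_0$. Since $a_{k+1} = \bigl( 1 - \tfrac{c}{k} + \tfrac{A}{k^{1+p}} \bigr) a_k > 0$, dividing the hypothesis by $a_{k+1}$ cancels the multiplicative part and gives
\[
 b_{k+1} \; \le \; b_k + \frac{B}{k^{p+1} a_{k+1}}, \qquad \text{so} \qquad b_k \; \le \; b_{k_0} + B \sum_{j=k_0}^{k-1} \frac{1}{j^{p+1} a_{j+1}} .
\]
By the Step-1 estimate $\tfrac{1}{j^{p+1} a_{j+1}} = \Theta\bigl( j^{\,c-p-1} \bigr)$, so the three cases of the lemma are precisely the three behaviours of $\sum_j j^{\,c-p-1}$: for $c<p$ the series converges, hence $b_k = \mathcal{O}(1)$ and $e^k = a_k b_k = \mathcal{O}(k^{-c})$; for $c=p$ the partial sums are $\Theta(\log k)$, hence $b_k = \mathcal{O}(\log k)$ and $e^k = \mathcal{O}(k^{-c}\log k)$; for $c>p$ the partial sums are $\sim \tfrac{k^{\,c-p}}{c-p}$, hence $b_k = \mathcal{O}(k^{\,c-p})$ and $e^k = \mathcal{O}(k^{-p})$.

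To upgrade the $c>p$ bound to the stated leading constant, I would bootstrap: knowing $e^k = \mathcal{O}(k^{-p})$, the term $\tfrac{A e^k}{k^{1+p}} = \mathcal{O}(k^{-1-2p}) = o(k^{-p-1})$ is negligible, so the additive perturbation is $\tfrac{B + o(1)}{k^{p+1}}$; combining $\tfrac{1}{j^{p+1} a_{j+1}} \sim \tfrac{1}{\kappa}\, j^{\,c-p-1}$, $\sum_{j \le k} j^{\,c-p-1} \sim \tfrac{k^{c-p}}{c-p}$, and $k^{c} a_k \to \kappa$ yields $\limsup_k k^{p} e^k \le \tfrac{B}{c-p} \le \tfrac{A+B}{c-p}$, i.e.\ $e^k \le (A+B)(c-p)^{-1} k^{-p} + o(k^{-p})$. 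The remaining bookkeeping — finiteness of the starting value $b_{k_0}$ and the choice of $k_0$ — is routine.

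I expect the crux to be Step 1: getting a genuinely two-sided bound $a_k = \Theta(k^{-c})$ (equivalently, convergence of $\log a_k + c\log k$), not merely an upper bound, since this is what makes the dichotomy in Step 3 tight on both sides. It is also worth noting why the tempting ``ansatz $e^k \le M k^{-c}$ and induct'' route is not taken: in the regime $c < p < c+1$ the induction step would require $B\, k^{\,c+1-p}$ to remain bounded by $M$, which fails for large $k$; comparing against $a_k = \prod_{j}( 1 - \tfrac{c}{j} + \tfrac{A}{j^{1+p}} )$ instead of the power $k^{-c}$ is exactly the device that repairs this, because the residual perturbation then lands in $\sum_j j^{\,c-p-1}$, which converges precisely when $c < p$.
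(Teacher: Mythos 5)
Your argument is correct, but it takes a genuinely different route from the paper. The paper's proof is a two-step reduction to cited results: it first invokes Polyak's Lemma 3 (Chapter 2.2 of \emph{Introduction to Optimization}) to conclude $e^k \to 0$, hence $e^k \le 1$ for $k \ge K$, which lets the troublesome term be absorbed as $\tfrac{A}{k^{1+p}}e^k \le \tfrac{A}{k^{1+p}}$; the recursion then collapses to $e^{k+1} \le (1-\tfrac{c}{k})e^k + \tfrac{A+B}{k^{p+1}}$, to which Chung's lemma (same reference) applies verbatim and directly yields all three cases, including the constant $(A+B)(c-p)^{-1}$. You instead reprove the Chung-type statement from scratch via the summation-factor device: comparing against the exact product $a_k = \prod_j(1-\tfrac{c}{j}+\tfrac{A}{j^{1+p}})$, establishing the two-sided asymptotic $k^c a_k \to \kappa \in (0,\infty)$, and reading off the three regimes from $\sum_j j^{c-p-1}$. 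Both are sound; what the paper's route buys is brevity (two lines plus citations), while yours buys self-containedness and, in the $c>p$ case, a sharper leading constant $B(c-p)^{-1}$ rather than $(A+B)(c-p)^{-1}$ --- your bootstrap shows the $A$-term contributes only at order $o(k^{-p-1})$ once $e^k = \mathcal{O}(k^{-p})$ is known, whereas the paper's absorption of $\tfrac{A}{k^{1+p}}e^k$ into the forcing term is exactly what produces the $A+B$. Your closing remark on why the naive induction ansatz $e^k \le Mk^{-c}$ fails for $c<p<c+1$ is a correct and worthwhile observation, though it plays no role in the paper's citation-based proof.
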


\begin{theorem}[SAGR under \ref{QG}]\label{SAGR-QG}\em 
    Consider the $N$-player game $\mathcal{G}({\bf f}, X, \bxi)$. Suppose that
Assumptions $\mathrm{A}$ and $\mathrm{C}$ hold, and that the \ref{QG} property
is satisfied. Consider the sequence $\{x^k\}_{k=0}^{\infty}$ generated by
\textbf{SAGR}, where the stepsizes $\{\gamma^{k}\}_{k=0}^{\infty}$ are defined
in \eqref{asyn_gamma}. Then we have (i) ${\displaystyle \lim_{k\to\infty}}
\mathrm{dist}\:(x^{k}, X^{*}) = 0$ a.s.; and (ii) for sufficiently large $k$,
    \begin{equation*}
        \mathbb{E} \left[\, \mathrm{dist}^{2}(x^{k+1}, X^{\ast}) \, \right] \begin{cases}
            \leq \tfrac{3+8\sum_{i=1}^{N} M^{2}_{w, i} + 11\sum_{i=1}^{N} M^{2}_{F, i}}{p^{2}_{\min}(2\alpha-1/2+\Tilde{d})k^{1/2-\Tilde{d}}} + o( k^{-1/2+\Tilde{d}} ), \hspace{-0.1in} &\textrm{if } 2\alpha > 1/2-\Tilde{d}, \\
            = \mathcal{O}\left( k^{-2\alpha}\log{k} \right), &\textrm{if } 2\alpha = 1/2-\Tilde{d}, \\
            = \mathcal{O}\left( k^{-2\alpha} \right), &\textrm{if } 2\alpha < 1/2-\Tilde{d}.
        \end{cases}
    \end{equation*}
    where $\alpha > 0$ is the \ref{QG} parameter, $\Tilde{d} \in (0, \tfrac{1}{2})$, $p_{\min} = \min_{i\in [N]}p_{i}$, and $M_{w, i} > 0$ and $M_{F, i} > 0$ are given in Assumption $\mathrm{C}$.
\end{theorem}
\begin{proof}
\allowdisplaybreaks
     By invoking the \ref{QG} property in the \textbf{SAGR} recursion given in Lemma \ref{SAGR-recursions}-(ii), it leads to
     \begin{align*}
        \mathbb{E} [ \mathrm{dist}^{2}(x^{k+1}, X^{\ast}) \mid \mathcal{F}_{k} ] &\leq (1 - \tfrac{2\alpha}{k} + \max_{i} p_i|\gamma^{k}_{i}-\tfrac{1}{kp_{i}}|) \: \mathrm{dist}^{2}(x^{k}, X^{\ast}) \\
        & ~~~+ 2 \sum_{i=1}^{N} p_i(\gamma^{k}_{i})^{2} M^{2}_{w, i} + \sum_{i=1}^{N} p_i( 2 (\gamma^{k}_{i})^{2} + |\gamma^{k}_{i}-\tfrac{1}{kp_{i}}|) M^{2}_{F, i}.
    \end{align*}
    By Lemma \ref{SAGR-stepsize-asymptotics} and the fact that $p_{i} \leq 1$, for sufficiently large $k$, we have
    \begin{equation*}
        \mathbb{E} [ \mathrm{dist}^{2}(x^{k+1}, X^{\ast}) \mid \mathcal{F}_{k} ] \leq (1 - \tfrac{2\alpha}{k} + \tfrac{3}{k^{3/2-\Tilde{d}}p^{2}_{\min}}) \: \mathrm{dist}^{2}(x^{k}, X^{\ast}) + \tfrac{8\sum_{i=1}^{N} M^{2}_{w, i} + 11\sum_{i=1}^{N} M^{2}_{F, i}}{k^{3/2-\Tilde{d}}p^{2}_{\min}}.
    \end{equation*}
    Then (i) follows from Lemma \ref{three-useful-lemma}-(ii). By taking unconditional expectations on both sides of the above recursion and invoking Lemma \ref{Chung}, we can show (ii).
\end{proof}

\begin{remark}
    We make two remarks here. First, it follows from subsection \ref{Sec-2.2} that strong pseudomonotonicity \eqref{SP} is a sufficient condition for \ref{QG} when $X^{\ast}$ is a singleton. Therefore, the claims of Theorems \ref{SSGR-QG} and \ref{SAGR-QG} also hold under \ref{SP} when $X^{\ast}$ is a singleton. Second, the minimax problems are closely connected to variational inequalities, and the alternating gradient method (AGM) \cite{xu-zhang-xu-lan-2023} can be viewed as a two-player asynchronous gradient update scheme. However, existing convergence guarantees for AGM are limited to nonconvex-concave and convex-nonconcave regimes \cite{xu-zhang-xu-lan-2023}. The convergence behavior of AGM for nonconvex-nonconcave minimax problems remains largely unexplored. We intend to establish its convergence and rate guarantees under suitable growth and sharpness conditions in our future work. $\hfill \Box$
\end{remark}

\subsection{Computing NE for nonconvex potential games}\label{Sec-4.2}
In this subsection, we show that under the potentiality property, convergence and rate guarantees for the computation of NE (rather than QNE) can be provided, despite the presence of nonconvexity. Recall from assumption $\mathrm{(A1)}$ that, for given $x_{-i}\in X_{-i}$, each $f_i(\bullet, x_{-i})$ is C$^1$ on an open set $\mathcal{O}_{i} \supseteq X_i$. We say an $N$-player game is a potential game \cite{monderer-shapley-1996} if there exists a function $\mathcal{P}: \mathcal{O} \triangleq \Pi_{i=1}^{N} \mathcal{O}_{i} \to \mathbb{R}$ such that for any $i\in [N]$, we have 
\begin{align}\label{potential-def}
    f_{i}(x_{i}, x_{-i}) - f_{i}(y_{i}, x_{-i}) = \mathcal{P}(x_{i}, x_{-i}) - \mathcal{P}(y_{i}, x_{-i})
\end{align}
for any $x^{1}_{i}, x^{2}_{i}\in \mathcal{O}_{i}$ and any $x_{-i}\in X_{-i}$. Consequently, we have that $\nabla_{x} {\cal P}(x) = (\nabla_{x_i} f_i(x))_{i=1}^N$ by \cite[Proposition 3.1]{xiao-2026}. Recall that we say a smooth function ${\cal P}$ is pseudoconvex on $X$ \cite[Definition 3.2]{karamardian-schaible-1990} if $\nabla {\cal P}(x)^{\top}(y-x) \geq 0 \implies {\cal P}(y) \ge {\cal P}(x)$ for any $x,y \in X$. By imposing a potentiality property with a C$^1$ and pseudoconvex function ${\cal P}$, it can be shown that a QNE is indeed an NE. We formalize it below.

\begin{proposition}\label{QNE-implies_NE}\em
    Consider a smooth $N$-player game $\mathcal{G}({\bf f}, X, \bxi)$. Suppose this game admits a potential function ${\cal P}$. The following hold if  ${\cal P}$ is C$^{1}$ and pseudoconvex:
    \begin{equation*}
        x^{*} \mbox{ is a } \mbox{QNE} \implies x^{*} \mbox{is a B-stationary point} \mbox{ of } \min_{x\in X} {\cal P}(x) \implies  x^{*} \mbox{ is an } \mbox{NE}.
    \end{equation*}
\end{proposition}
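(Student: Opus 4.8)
The plan is to chain two implications, each of which is essentially a translation between the variational characterization of a QNE and the optimization characterization of the potential function's stationary points, with pseudoconvexity bridging the gap between stationarity and global optimality in the second implication.

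First I would show that a QNE $x^*$ is a B-stationary point of $\mathcal{P}$ on $X$. Recall that since each $X_i$ is convex and closed (Assumption~\ref{Ass:0}) and each $f_i(\cdot,x_{-i})$ is $C^1$, being a QNE means $\nabla_{x_i} f_i(x^*_i,x^*_{-i})^\top(x_i - x^*_i) \ge 0$ for all $x_i \in X_i$ and all $i \in [N]$. The key observation is that the potential property~\eqref{potential-def}, differentiated with respect to $x_i$, yields $\nabla_{x_i} f_i(x_i,x_{-i}) = \nabla_{x_i}\mathcal{P}(x_i,x_{-i})$ for all $i$; hence the concatenated map $F$ coincides with $\nabla\mathcal{P}$ on $X$. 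Therefore the QNE inequalities say precisely that $\nabla\mathcal{P}(x^*)^\top(x - x^*) = \sum_{i=1}^N \nabla_{x_i}\mathcal{P}(x^*)^\top(x_i - x^*_i) \ge 0$ for every $x \in X$ (using that $X = \prod_i X_i$, so testing with one block at a time and summing is equivalent to testing with arbitrary $x \in X$). Since $X$ is convex, $\mathcal{T}(x^*;X)$ is the closure of the cone of feasible directions $\{x - x^* : x \in X\}$, and $\mathcal{P}$ being $C^1$ gives $\mathcal{P}'(x^*;v) = \nabla\mathcal{P}(x^*)^\top v$; so $\mathcal{P}'(x^*;v) \ge 0$ for all $v \in \mathcal{T}(x^*;X)$, i.e.\ $x^*$ is B-stationary for $\mathcal{P}$ on $X$.

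Next I would show that a B-stationary point of a pseudoconvex $\mathcal{P}$ over convex $X$ is a global minimizer of $\mathcal{P}$ over $X$, and that a global minimizer of the potential is an NE. For the first part: B-stationarity with $X$ convex and $\mathcal{P}$ smooth gives $\nabla\mathcal{P}(x^*)^\top(x - x^*) \ge 0$ for all $x \in X$; pseudoconvexity (Definition~\cite[3.2]{karamardian-schaible-1990}, as quoted) then immediately yields $\mathcal{P}(x) \ge \mathcal{P}(x^*)$ for all $x \in X$, so $x^*$ minimizes $\mathcal{P}$ on $X$. For the second part: if $x^*$ minimizes $\mathcal{P}$ on $X$, then for any $i$ and any $y_i \in X_i$ we have $\mathcal{P}(x^*_i,x^*_{-i}) \le \mathcal{P}(y_i,x^*_{-i})$; invoking~\eqref{potential-def} with $x_i = x^*_i$ converts this into $f_i(x^*_i,x^*_{-i}) \le f_i(y_i,x^*_{-i})$, which is exactly the statement that $x^*$ is an NE.

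The main obstacle, such as it is, is bookkeeping rather than depth: one must be careful that the QNE condition is a collection of $N$ separate variational inequalities over the blocks $X_i$, whereas B-stationarity of $\mathcal{P}$ is a single variational inequality over the product $X$, and I must verify these are genuinely equivalent. This equivalence relies crucially on the Cartesian product structure $X = \prod_i X_i$ together with $F = \nabla\mathcal{P}$: a direction $v \in \mathcal{T}(x^*;X)$ decomposes as $v = (v_i)_{i=1}^N$ with each $v_i \in \mathcal{T}(x^*_i;X_i)$, so testing $\nabla\mathcal{P}(x^*)^\top v \ge 0$ blockwise and summing is legitimate. A secondary subtlety is that pseudoconvexity as defined only gives the implication ``$\nabla\mathcal{P}(x^*)^\top(x-x^*) \ge 0 \Rightarrow \mathcal{P}(x) \ge \mathcal{P}(x^*)$''; this is exactly what is needed here, so no extra hypothesis (such as convexity of $\mathcal{P}$) is required, and one should not need to appeal to Proposition~\ref{F-and-P-three} for this implication.
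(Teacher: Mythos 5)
Your proposal is correct and follows essentially the same route as the paper: the second implication is argued identically (B-stationarity plus pseudoconvexity gives global minimality of $\mathcal{P}$ over $X$, which restricts blockwise to the NE inequalities via the potential identity), and your first implication merely spells out the blockwise-to-product VI equivalence via $F=\nabla\mathcal{P}$ that the paper dismisses as ``holds by definition.'' No gaps.
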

\begin{proof}
    The first implication follows immediately from $\nabla_{x} {\cal P}(x) = \big(\nabla_{x_i} f_i(x)\big)_{i=1}^N$; see \cite[Proposition 3.1]{xiao-2026}. We consider the second implication. By definition, we have that $\nabla {\cal P}(x^*)^{\top}(x-x^*) \geq 0$ holds for any $x \in X$. From pseudoconvexity of $\mathcal{P}$ on $X$,  ${\cal P}(x) \geq {\cal P}(x^*)$, and hence ${\cal P}(x) - {\cal P}(x^*)\geq 0$ for any $x\in X$. Therefore, by the potentiality assumption, we have that
    \begin{align*}
        {\cal P}(x_{i}, x^*_{-i}) - {\cal P}(x^*_{i}, x^*_{-i})\geq 0 \implies f(x_{i}, x^*_{-i}) - f(x^*_{i}, x^*_{-i})\geq 0
    \end{align*}
    holds for any $x_{i}\in X_{i}$ and any $i\in [N]$, which implies that $x^{\ast}$ is an NE.
\end{proof}

Consequently, we may provide the following claim for the computation of an NE of the potential nonconvex $N$-player game $\mathcal{G}({\bf f}, X, \bxi)$.

\begin{theorem}[NE convergence]\label{NE-nonconvex-games}\em
    Consider the $N$-player game $\mathcal{G}({\bf f}, X, \bxi)$. Suppose this game admits a C$^1$ and pseudoconvex potential function ${\cal P}$. Then under the same required assumptions, all prior results established for QNE also apply to NE. $\hfill \Box$
\end{theorem}

\section{A stochastic asynchronous modified GR (SAMGR) scheme}\label{Sec-5}
In this section, we focus on computing an NE rather than a QNE in the setting where each $f_i(\bullet,x_{-i})$ is C$^1$ and convex for any $i\in [N]$, but the concatenated gradient map $F$ is not necessarily monotone  and prior growth and sharpness properties do not hold. Note that the failure of monotonicity of $F$ is closely tied to the cross-player interaction. We build a smoothing-enabled framework reliant on minimizing the regularized gap function $\theta_{c}$ of $\mathrm{SVI}\:(X, F)$. We provide some preliminaries on the regularized gap function and randomized smoothing in subsection \ref{Sec-5.1}, and analyze the rate and complexity guarantees of the \textbf{SAMGR} scheme in subsection \ref{Sec-5.2}.

\subsection{Regularized gap function and randomized smoothing}\label{Sec-5.1}

\subsubsection{Regularized gap function}
We consider $\mathrm{SVI}\:(X,F)$, where $X$ is closed and convex. For a given $c > 0$, the associated regularized gap function \cite[Definition 10.2.2]{facchinei-pang-2003}, denoted by $\theta_{c}$, is defined as
\begin{equation}\label{regularized-gap-function}
    \theta_{c}(x) \triangleq  \sup_{y \in X} \left( F(x)^\top (x-y) - \tfrac{c}{2}(x-y)^{\top}(x-y) \right).
\end{equation} 

We recall some properties of the regularized gap function $\theta_{c}(x)$.

\begin{lemma}[\mbox{\cite[Proposition 3.1]{li-ng-2009}} \& \mbox{\cite[Theorem 10.2.3]{facchinei-pang-2003}}]\label{lemma-regularized-gap-function}\em
    Consider $\mathrm{SVI}\:(X,F)$ where $X$ is closed and convex and $F(x) = \mathbb{E}[\Tilde{F}(x, \bxi)]$. Suppose that the associated regularized gap function is $\theta_{c}$ for given $c > 0$. Then the following hold.
    
    (i) For any $x\in X$, there exists a unique vector $y_{c}(x)\in X$ at which the supremum in \eqref{regularized-gap-function} is attained at $y_{c}(x) = \Pi_{X}[x-\tfrac{1}{c}F(x)]$, based on 
    \begin{align}
        y_{c}(x) \, &\triangleq \, \argmax_{y\in X} \, \left(\, F(x)^\top (x-y) - \tfrac{c}{2}(x-y)^\top (x-y) \,\right) \,\label{yc-x-1} \\
        &= \, \argmin_{y\in X} \, \mathbb{E}_{\bxi} \left[ \underbrace{\Tilde{F}(x, \bxi)^\top (y-x) + \tfrac{c}{2}(y-x)^\top (y-x)}_{\triangleq \, \widehat{F}(x, y, \bxi) } \right]. \label{yc-x-2}
    \end{align}
    
    (ii) $y_{c}$ and $\theta_{c}$ are continuous on $\mathbb{R}^{n}$ and $\theta_{c}(x)\geq 0$ for all $x\in X$.

    (iii) $\left[ \: x\, \in \, X,\:\theta_{c}(x)\, =\, 0 \: \right] \: \iff \:  \left[ \: x \, =\, y_{c}(x) \: \right] \: \iff \: \left[ \: x\, \in \, \mathrm{SOL}\:(X,F) \: \right]$.

    (iv) If $F$ is locally Lipschitz on $\mathbb{R}^{n}$, then $y_{c}$ and $\theta_{c}$ are locally Lipschitz on $\mathbb{R}^{n}$.
    
    (v) If $F$ is $\mathrm{C}^1$, then $\theta_{c}$ is $\mathrm{C}^1$ and $\nabla \theta_{c}$ is defined as 
    \begin{equation}\label{regularized-gap-function-gradient}
        \nabla\theta_{c}(x) = F(x) + \mathbf{J} F(x)^{\top} (x-y_{c}(x)) - c(x-y_{c}(x)),
    \end{equation}
    where $\mathbf{J} F(x)$ denotes the Jacobian of $F$ at $x$. $\hfill \Box$
\end{lemma}

By \eqref{yc-x-1}, the regularized gap function $\theta_{c}$ in \eqref{regularized-gap-function} can be written as
\begin{align}\label{gap-function}
    \theta_{c}(x) &= F(x)^\top (x-y_{c}(x)) - \tfrac{c}{2}(x-y_{c}(x))^{\top}(x-y_{c}(x)) \notag \\
    &= \mathbb{E}_{\bxi} \left[ \underbrace{\Tilde{F}(x, \bxi)^\top (x-y_{c}(x)) - \tfrac{c}{2}(x-y_{c}(x))^{\top}(x-y_{c}(x))}_{\triangleq \, \Tilde{\theta}_{c}(x, y_{c}(x), \bxi)} \right]. 
\end{align}
Observe that the regularized gap function $\theta_{c}$ may lose convexity. Furthermore, if $\mathbf{J}F(x)$ exists for any $x$, then $\nabla \theta_{c}(x)$ is well defined. By Lemma \ref{lemma-regularized-gap-function} (ii)-(iii), our goal lies in minimizing $\theta_{c}$ over $X$ with the intent of finding  feasible zeros of $\theta_c$ (i.e., which are global minimizers of $\theta_c$ on $X$). However, gradient-based methods will \emph{at best} provide guarantees for computing a stationary point of $\theta_{c}$ over $X$, i.e., $0\in \nabla \theta_{c}(x) +
{\cal N}_{X}(x)$, where ${\cal N}_{X}(x)$ denotes the normal cone to $X$ at $x$. But under some condition, a stationary point of $\theta_c(x)$ is indeed a feasible zero, and hence a solution of $\mathrm{VI}\:(X,F)$.

\begin{definition}[\mbox{\cite[Definition 2.5.1]{facchinei-pang-2003}}]
    We say a matrix $M$ is strictly copositive on a cone $C$ if for all $x\in C\setminus \{0\}$ we have $x^{\top}Mx>0$.
\end{definition}

\begin{lemma}[\mbox{\cite[Corollary 10.2.7]{facchinei-pang-2003}}]\label{copositivity}\em
    Consider $\mathrm{VI}\:(X,F)$ where $X$ is closed and convex. Suppose that the mapping $F$ is $\mathrm{C}^1$ on $O\supseteq X$. Suppose $x^{\ast}$ is a stationary point of $\theta_{c}(x)$ on $X$, i.e., $0\in \nabla \theta_{c}(x^{\ast}) + \mathcal{N}_{X}(x^{\ast})$. If $\mathbf{J} F(x^{\ast})$ is strictly copositive on $\mathcal{T}_{X}(x^{\ast})\cap (-F(x^{\ast}))^{\ast}$, where $\mathcal{T}_{X}(x^{\ast})$ denotes the tangent cone to $X$ at $x^{\ast}$ and the dual cone $(-F(x^{\ast}))^{\ast}$ is defined as $(-F(x^{\ast}))^{\ast} \triangleq \{ d \mid d^{\top}F(x^{\ast})\leq 0 \}$, then $x^{\ast}$ is an isolated solution of $\mathrm{VI}\:(X,F)$. $\hfill \Box$
\end{lemma}

\begin{remark}
    In contrast to growth and sharpness conditions, strict copositivity need not hold for every $x \in X$. It is instead a local property and it suffices to examine $\mathbf{J}F(x^{\ast})$. By \cite[Proposition 2.3.2]{facchinei-pang-2003}, the local \emph{strong} monotonicity of $F$ around $x^{\ast}$ provides a sufficient condition for the strict copositivity of $\mathbf{J}F(x^{\ast})$. An easily verifiable case arises when each $X_i$ is a polyhedral set of the form $X_i \triangleq \{x_i \in \mathbb{R}^{n_{i}} \mid {\bf A}_i x_i \leq {\bf b}_i\}$, where ${\bf A}_i\in \mathbb{R}^{m_i\times n_i}$ and ${\bf b}_i\in \mathbb{R}^{m_i}$. Consequently, by definition, we have $X \triangleq \left\{x \in \mathbb{R}^{n} \mid {\bf A}x\leq {\bf b}\right\}$, where
    \begin{equation*}
        {\bf A} \triangleq \pmat{
        {\bf A}_1 &  & \\
        & \ddots & \\
        & & {\bf A}_N} \mbox{ and } {\bf b} \triangleq \pmat{
        {\bf b}_1 \\ \vdots \\ {\bf b}_N}.
    \end{equation*}
    For a given $x\in X$, define the associated active set as ${\cal I}(x)\triangleq \{j \mid a_j^\top x=b_j\}$, where $a_{j}^{\top}$ denotes the $j$th row of ${\bf A}$ and $b_{j}$ denotes the $j$th element of ${\bf b}$. Then, by \cite[p.\,592]{rockafellar-wets-1998}, the tangent cone to $X$ at $x$ is given by ${\cal T}_{X}(x) = \{v\in \mathbb{R}^n \mid a_j^\top v\leq 0, ~ \forall j\in {\cal I}(x)\}$. Therefore, if ${\bf J}F(x^\ast)$ is strictly copositive on the polyhedral cone ${\cal T}_X(x^\ast)$, then it is also strictly copositive on the smaller intersection ${\cal T}_{X}(x^\ast)\cap (-F(x^\ast))^\ast$. It follows that $x^\ast$ is an isolated solution of $\mathrm{VI}\:(X, F)$. $\hfill \Box$
\end{remark}

We conclude by defining the stationarity residual $G_{t}$ of $\theta_{c}$ over $X$ as
\begin{equation}\label{residual-mapping}
    G_{t}(x) \triangleq t \left( x-\Pi_{X} [x - t^{-1} \nabla \theta_{c}(x) ] \right)
\end{equation}
for any $t > 0.$
By \cite[p.\,214]{beck-2023}, $G_{t}(x) = 0$ if and only if $x$ is a stationary point of $\theta_{c}(x)$ over $X$, i.e., $0\in \nabla \theta_{c}(x) + \mathcal{N}_{X}(x)$. Note that $G_t$ will be used as a metric of convergence later in this section.

\subsubsection{Randomized smoothing}

We now discuss a randomized smoothing framework that allows for constructing tractable gradient estimators of $\theta_c$. Note that $\nabla \theta_c(x)$ in \eqref{regularized-gap-function-gradient} is inaccessible, given its dependence on the expectation-valued matrix $\mathbf{J}F(x)$. To this end, we consider randomized smoothing~\cite{steklov-1907} of $\theta_c$ with $\eta > 0$, with the smoothed counterpart $\theta^{\eta}_{c}$ defined as
\begin{equation*}
    \theta_{c}^{\eta}(x) \triangleq \mathbb{E}_{\mathbf{u}\in \mathbb{B}} \left[ \, \theta_{c}(x+\eta \mathbf{u}) \, \right] \overset{\eqref{gap-function}}{=} \mathbb{E}_{\uu, \bxi} \left[ \, \Tilde{\theta}_{c}(x+\eta \uu, y_{c}(x+\eta \uu), \bxi) \, \right],
\end{equation*}
where $\mathbb{B}$ denotes the unit ball and $\mathbf{u}$ is uniformly distributed over $\mathbb{B}$. We first recall some basic properties of randomized smoothing, where we denote the surface of $\mathbb{B}$ by $\mathbb{S}$ and the Minkowski sum of $X$ and $\eta \mathbb{B}$ by $X_{\eta} \triangleq X+\eta \mathbb{B}$.

\begin{lemma}[\mbox{\cite[Lemma 2.4]{marrinan-shanbhag-yousefian-2026}}]\label{RS-property-grad}\em
    Consider the regularized gap function $\theta_{c}$ and its randomized smoothing $\theta_{c}^{\eta}$, where $\eta>0$. Then the following hold.
    
    (i) $\theta_{c}^{\eta}(x)$ is C$^{1}$ over $X$, satisfying the following for all $x\in X$.
    \begin{align}
       \nabla \theta_{c}^{\eta}(x) = (\tfrac{n}{2\eta}) \mathbb{E}_{\bv\in \eta\mathbb{S}}\left[\, (\theta_{c}(x+\bv) - \theta_{c}(x-\bv))\tfrac{\bv}{\|\bv\|}\, \right]
    \end{align}
     
     Suppose $\theta_{c}$ is $L_{0}$-Lipschitz continuous and $L_{1}$-smooth on $X_{\eta}$. For any $x, y\in X$, (ii)-(v) hold.

    (ii) $|\theta_{c}^{\eta}(x)-\theta_{c}^{\eta}(y)|\leq L_{0}\|x-y\|$. (iii) $|\theta_{c}^{\eta}(x)-\theta_{c}(x)|\leq L_{0}\eta$.
    
    (iv) $\|\nabla \theta_{c}^{\eta}(x) - \nabla \theta_{c}^{\eta}(y)\|\leq \tfrac{L_{0}\sqrt{n}}{\eta}\|x-y\|$. (v) $\| \nabla \theta_{c}^{\eta}(x) - \nabla \theta_{c}(x) \|\leq \eta L_{1} n$.

    (vi) $\mathbb{E}_{\bv \in \eta\mathbb{S}} \left[ \| g(x, \bv) \|^{2} \right] \leq 16\sqrt{2\pi}L^{2}_{0}n$, where $g(x, \bv) \triangleq \left( \tfrac{n( \theta_{c}(x+\bv) - \theta_{c}(x-\bv) )\bv}{2\eta\left\| \bv \right\|} \right)$. $\hfill \Box$

\end{lemma} 

Next we present the \textbf{SAMGR} scheme in which a diminishing smoothing parameter sequence $\{\eta_k\}_{k\geq 0}$ is employed, implying that $\nabla \theta_c^{\eta_k}(x)$ approximates the true gradient $\nabla \theta_c(x)$ with increasing accuracy as $k \to \infty.$ 

\subsection{SAMGR scheme}\label{Sec-5.2}

\subsubsection{Assumption and algorithm}
Recall that the claim in Lemma~\ref{copositivity} relies on strict
copositivity of ${\bf J}F(x^\ast)$. From Lemma~\ref{copositivity}, the convergence claim under strict copositivity relies on the existence of $\nabla \theta_c(x)$, which by \eqref{regularized-gap-function-gradient} exists as long as $\mathbf{J} F(x)$ exists. Therefore, we always impose such an assumption throughout this subsection.  Our proposed {\bf SAMGR} scheme is presented in Algorithm \ref{SAMGR}, inspired by recent efforts in zeroth-order methods for solving stochastic nonsmooth nonconvex optimization \cite{cui-shanbhag-yousefian-2023, marrinan-shanbhag-yousefian-2026, qiu-shanbhag-yousefian-2023, shanbhag-yousefian-2021}. Note that an exact solution $y_{c}(x)$ in \eqref{yc-x-2} is unavailable in finite time and we employ stochastic approximation (\textbf{SA}) \cite{robbins-monro-1951} to compute an inexact solution, which is described in Algorithm \ref{SA-lower-level}. We consider the following assumption.

\vspace{5pt}

\emph{Assumption $\mathrm{D}$.} (D1) Suppose that $X$ is compact and $\mathbf{J} F(x)$ exists for any $x\in X$. (D2) Each $\Tilde{\theta}_{c}(\bullet, y(\bullet), \xi)$ is $L_{0}$-Lipschitz and $L_{1}$-smooth on $X_{\eta}$. (D3) Each $\Tilde{\theta}_{c}(x, \bullet, \xi)$ is $L^{y}_{0}$-Lipschitz on $X_{\eta}$. (D4) For any $x\in X$, $\|\nabla \theta_{c}(x)\| \leq M_{\theta}$ holds for some $M_{\theta} > 0$. (D5) Consider Algorithm \ref{SA-lower-level}. For any $k, t \geq 0$, we have (i) $\mathbb{E}[\nabla_{y}\widehat{F}(\hat{x}^{k}, y^{t}, \xi^{t}) \mid \hat{x}^{k}, y^{t}] = \nabla_{y}\mathbb{E}[\widehat{F}(\hat{x}^{k}, y^{t}, \bxi) \mid \hat{x}^{k}, y^{t}]$; (ii) $\mathbb{E}[ \| \nabla_{y}\widehat{F}(\hat{x}^{k}, y^{t}, \xi^{t}) - \nabla_{y}\mathbb{E}[\widehat{F}(\hat{x}^{k}, y^{t}, \bxi)] \|^{2} \mid \hat{x}^{k}, y^{t}]\leq v^{2}_{F}$ holds a.s. for some $v_{F} > 0$; and (iii) the bound $\| \nabla_{y}\mathbb{E}[\widehat{F}(x, y, \bxi)] \|$ $\leq c_{F}$ holds for some $c_{F} > 0$ and any $x, y\in X$. $\hfill \Box$

\vspace{5pt}

\begin{remark}
    For simplicity, we assume that each $\xi$-realization, given by $\widehat{F}(\bullet,\xi)$, is $L$-Lipschitz continuous for any $\xi\in \Xi$. We may obtain analogs of our results by instead supposing that each $\xi$-realization is $L(\xi)$-Lipschitz continuous under a suitable integrability condition on $L(\bxi)$. $\hfill \Box$
\end{remark}

\begin{algorithm}[htbp]
    \caption{~Stochastic Asynchronous Modified GR ({\bf SAMGR}) Scheme}
    Set $k = 0$. Initialize $x^{0}\in X$, a constant stepsize $\gamma < \min \{ \tfrac{N}{L_{1}}, N \}$, a sequence of parameter triples $\{\eta_{k}, N_{k}, \epsilon_{k}\}_{k\geq 0}$. Iterate until $k\geq K$.

    \textbf{Player selection.} Pick player $i(k)\in \{ 1, \cdots, N \}$ with probability $\tfrac{1}{N}$.

    \textbf{Single ZO estimator.} Set $j = 1$. Iterate (1)-(3) until $j \geq N_{k}$.

    (1) Generate an i.i.d. realization $v^{k}_{j}\in \eta_{k} \mathbb{S}$ of $\bv$ and an i.i.d. realization $\xi^{k}_{j}$ of $\bxi$.
    
    (2) Call Algorithm \ref{SA-lower-level} to obtain inexact solutions $y^{\epsilon_{k}}_{c}(x^{k}+v^{k}_{j})$ and $y^{\epsilon_{k}}_{c}(x^{k}-v^{k}_{j})$.

    (3) If $v_{j, i(k)}^{k}$ is the $i(k)$-th block of $v^{k}_{j}\in \eta_{k} \mathbb{S}$,
    compute $g^{\eta_{k}, \epsilon_{k}}_{j, i(k)}(x^{k})$ as follows
    \begin{equation*}
        g^{\eta_{k}, \epsilon_{k}}_{j, i(k)}(x^{k}) \triangleq \tfrac{n \left( \Tilde{\theta}_{c}(x^{k}+v^{k}_{j}, y^{\epsilon_{k}}_{c}(x^{k}+v^{k}_{j}), \xi^{k}_{j}) - \Tilde{\theta}_{c}(x^{k}-v^{k}_{j}, y^{\epsilon_{k}}_{c}(x^{k}-v^{k}_{j}), \xi^{k}_{j}) \right) v_{j, i(k)}^{k}}{2\eta_{k}\|v^{k}_{j}\|}.
    \end{equation*}

	\textbf{Mini-batch ZO estimator.} Compute $g^{\eta_{k}, \epsilon_{k}}_{N_{k}, i(k)}(x^{k}) \triangleq \tfrac{1}{N_{k}} \sum_{j=1}^{N_{k}} g^{\eta_{k}, \epsilon_{k}}_{j, i(k)}(x^{k})$.

    \textbf{Block update.} $x^{k+1}_{i} \triangleq \Pi_{X_{i}} [ x^{k}_{i} - \gamma g^{\eta_{k}, \epsilon_{k}}_{N_{k}, i(k)}(x^{k}) ]$ if $i = i(k)$ otherwise $x^{k+1}_{i} \triangleq x^{k}_{i}$.

    \textbf{Return.}~ $x^{R_{K}}$ as the final estimate, where $R_{K}$ is a random variable uniformly distributed over $\{\lceil \lambda K\rceil, \cdots, K\}$ with $\lambda\in(0,1)$.
    \label{SAMGR}
\end{algorithm}

\begin{algorithm}[htbp]
    \caption{~\textbf{SA} Scheme (at $k$th iteration of \textbf{SAMGR})}
    Set $t = 0$, $y^{0}\in X$, $\hat{x}^{k}\in X_{\eta}$, stepsizes $\{ \alpha_{t} \}_{t\geq 0}$ defined as $\alpha_{t} = \tfrac{\alpha_{0}}{t+\Gamma}$ where $\alpha_{0} > \tfrac{1}{2c}$ and $\Gamma>0$. Iterate until $t\geq t_{k}$.
    
    \textbf{Update.}  $y^{t+1} = \Pi_{X}[y^{t}-\alpha_{t} \nabla_{y}\widehat{F}(\hat{x}^{k}, y^{t}, \xi^{t})]$ for solving \eqref{yc-x-2},  where $\xi^{t}$ denotes the i.i.d. random realization of $\bxi$.

    \textbf{Return.} $y^{t_{k}}$ as final estimate and let $y^{\epsilon_{k}}_{c}(\hat{x}^{k}) \triangleq y^{t_{k}}$.
    \label{SA-lower-level}
\end{algorithm}

We observe that the Lipschitz continuity properties required in assumptions $\mathrm{(D2)}$ and $\mathrm{(D3)}$ follow from suitable requirements on $\mathrm{VI}\:(X, F)$. We formalize the relations in the following lemma and its proof can be found in the appendix.

\begin{lemma}\label{VI-requirement-deduces-Lipschitz-continuity}\em
    Suppose $X$ is compact with diameter $D_{X} > 0$ and $\mathbf{J} F(x)$ exists for all $x\in X$. Assume that there exists some $L_{F} > 0$, $M_{F} > 0$, $M_{J} > 0$, and $L_{J} > 0$ such that for any $x, y \in X$ and any realization $\xi$, we have that $\| \Tilde{F}(x, \xi) - \Tilde{F}(y, \xi) \| \leq L_{F} \| x - y \|$, $\| \Tilde{F}(x, \xi) \| \leq M_{F}$, $\| \mathbf{J} \Tilde{F}(x, \xi) \| \leq M_{J}$, and $\| \mathbf{J} \Tilde{F}(x, \xi) - \mathbf{J} \Tilde{F}(y, \xi) \| \leq L_{J} \| x - y \|$. Then the following hold: (i) $y_{c}(\bullet)$ is $(1 + \tfrac{L_{F}}{c})$-Lipschitz; (ii) Each $\Tilde{\theta}_{c}(\bullet, y(\bullet), \xi)$ is $L_{0}$-Lipschitz, where $L_{0} \triangleq 2D_{X}(L_{F}+c) + M_{F}(2 + \tfrac{L_{F}}{c})$; (iii) Each $\Tilde{\theta}_{c}(\bullet, y(\bullet), \xi)$ is $L_{1}$-smooth, where $L_{1} \triangleq 2(L_{F}+M_{J}+c) + D_{X}L_{J} + \tfrac{M_{J}L_{F}}{c}$; and (iv) Each $\Tilde{\theta}_{c}(x, \bullet, \xi)$ is $L^{y}_{0}$-Lipschitz, where $L^{y}_{0} \triangleq M_{F} + c D_{X}$. $\hfill \Box$
\end{lemma}

Based on Assumption $\mathrm{D}$, we next analyze the \textbf{SAMGR} scheme and provide rate and complexity statements.

\subsubsection{Convergence and complexities}

Before we analyze \textbf{SAMGR}, we first analyze the \textbf{SA} scheme, which provides inexact solutions $y^{\epsilon_{k}}_{c}(x^{k} + v^{k}_{j})$ and $y^{\epsilon_{k}}_{c}(x^{k} - v^{k}_{j})$. For convenience, we use the notation $\hat{x}^{k}$ to unify $x^{k} - v^{k}_{j}$ and $x^{k} + v^{k}_{j}$, i.e., when we say $\hat{x}^{k}$, it may refer to either $x^{k} - v^{k}_{j}$ or $x^{k} + v^{k}_{j}$. Below we clarify the error bound between the exact solution $y_{c}(\hat{x}^{k})$ and its inexact counterpart $y_{c}^{\epsilon_{k}}(\hat{x}^{k})$.

\begin{lemma}\label{lower-level-error}\em
Consider Algorithm \ref{SA-lower-level} for solving the strongly convex stochastic optimization problem \eqref{yc-x-2}. Suppose assumption $\mathrm{(D5)}$ holds. Given $k > 0$ and $\hat{x}^{k}\in X$, let $y_{c}(\hat{x}^{k})$ be the unique exact solution and $y^{\epsilon_{k}}_{c}(\hat{x}^{k})$ be the output of Algorithm \ref{SA-lower-level} after $t_{k}$ iterations. Then for any $k \geq 0$, we have that
\begin{equation*}
    \mathbb{E}[\|y^{\epsilon_{k}}_{c}(\hat{x}^{k})-y_{c}(\hat{x}^{k})\|^{2} \mid \hat{x}^{k}]\leq \epsilon_{k} \triangleq \tfrac{c_{\epsilon}}{t_{k}+\Gamma},
\end{equation*}
where $\Gamma > 0$ and $c_{\epsilon} \triangleq \max \left\{\tfrac{(c^{2}_{F}+v^{2}_{F})\alpha^{2}_{0}}{2c\alpha_{0}-1}, \: \Gamma\sup_{y\in X}\|y_{0}-y\|^{2} \right\} > 0$.
\end{lemma}
\begin{proof}
    The proof is similar to that of \cite[Theorem 2-(a)]{cui-shanbhag-yousefian-2023}, except that the cited result solves a strongly monotone stochastic VI, whereas here we solve the strongly convex stochastic optimization problem \eqref{yc-x-2}. We therefore omit the proof.
\end{proof}

To facilitate our analysis, besides $g^{\eta_{k}, \epsilon_{k}}_{j, i(k)}(x^{k})$ and $g^{\eta_{k}, \epsilon_{k}}_{N_{k}, i(k)}(x^{k})$ defined in Algorithm \ref{SAMGR}, we further define $g^{\eta_{k}}_{j}(x^{k})$ and $g^{\eta_{k}, \epsilon_{k}}_{j}(x^{k})$ as
\begin{align*}
    g^{\eta_{k}}_{j}(x^{k}) &\triangleq \tfrac{n \left( \Tilde{\theta}_{c}(x^{k}+v^{k}_{j}, y_{c}(x^{k}+v^{k}_{j}), \xi^{k}_{j}) - \Tilde{\theta}_{c}(x^{k}-v^{k}_{j}, y_{c}(x^{k}-v^{k}_{j}), \xi^{k}_{j}) \right) v_{j}^{k}}{2\eta_{k}\|v^{k}_{j}\|}, \\
    g^{\eta_{k}, \epsilon_{k}}_{j}(x^{k}) &\triangleq \tfrac{n \left( \Tilde{\theta}_{c}(x^{k}+v^{k}_{j}, y^{\epsilon_{k}}_{c}(x^{k}+v^{k}_{j}), \xi^{k}_{j}) - \Tilde{\theta}_{c}(x^{k}-v^{k}_{j}, y^{\epsilon_{k}}_{c}(x^{k}-v^{k}_{j}), \xi^{k}_{j}) \right) v_{j}^{k}}{2\eta_{k}\|v^{k}_{j}\|}.
\end{align*}
The block update step in \textbf{SAMGR} can be equivalently rewritten as
\begin{equation*}
    x^{k+1} \triangleq \Pi_{X} \left[ x^{k} - N^{-1} \gamma (\nabla \theta_{c}(x^{k}) + e^{k}_{1} + e^{k}_{2} + e^{k}_{3} + e^{k}_{4}) \right],
\end{equation*}
where errors $e^{k}_{1}$, $e^{k}_{2}$, $e^{k}_{3}$, and $e^{k}_{4}$ are defined as
\begin{equation}\label{three-errors}
    \begin{aligned}
        e^{k}_{1} &\triangleq \nabla \theta^{\eta_{k}}_{c}(x^{k}) - \nabla \theta_{c}(x^{k}), \quad e^{k}_{2} \triangleq \tfrac{1}{N_{k}} \sum_{j=1}^{N_{k}} e^{k}_{2, j}, \; e^{k}_{2, j} \triangleq g^{\eta_{k}}_{j}(x^{k}) - \nabla \theta^{\eta_{k}}_{c}(x^{k}), \\
        e^{k}_{3} &\triangleq \tfrac{1}{N_{k}} \sum_{j=1}^{N_{k}} e^{k}_{3, j}, \; e^{k}_{3, j} \triangleq g^{\eta_{k}, \epsilon_{k}}_{j}(x^{k}) - g^{\eta_{k}}_{j}(x^{k}), \\
        e^{k}_{4} &\triangleq \tfrac{1}{N_{k}} \sum_{j=1}^{N_{k}} e^{k}_{4, j}, \; e^{k}_{4, j} \triangleq N \mathbf{U}_{i(k)} g^{\eta_{k}, \epsilon_{k}}_{j, i(k)}(x^{k}) - g^{\eta_{k}, \epsilon_{k}}_{j}(x^{k}) \textrm{ with } N > 1,
    \end{aligned}
\end{equation}
where $\mathbf{U}_{i(k)}\in \mathbb{R}^{n\times n_{i(k)}}$ is the $i(k)$-th block of the identity matrix.

\begin{lemma}[SAMGR recursion]\label{theta-function-descent-lemma}\em 
    Suppose Assumption $\mathrm{D}$ holds. Consider the sequence $\{x^{k}\}_{k=0}^{\infty}$ generated by {\bf SAMGR} with a constant stepsize $\gamma < \min \{ \tfrac{N}{L_{1}}, N \}$. Then for any $k\geq 0$, we have that
    \begin{equation}\label{SAMGR-recursion}
        (1-\tfrac{\gamma L_{1}}{N})\tfrac{\gamma}{4N}\|G_{N/\gamma}(x^{k})\|^{2} \leq \theta_{c}(x^{k}) - \theta_{c}(x^{k+1}) + (1-\tfrac{\gamma L_{1}}{2N})\tfrac{\gamma}{N} \| e^{k}_{1} + e^{k}_{2} + e^{k}_{3} + e^{k}_{4} \|^{2}.
    \end{equation}
\end{lemma}
\begin{proof}
    The proof is similar to that of \cite[Lemma 5]{shanbhag-yousefian-2021}, and hence we omit it.
\end{proof}

We denote the \textbf{SAMGR} history at iteration $k$ by $\mathcal{F}_{k} \triangleq \cup_{t=0}^{k-1} \{ i(t), \cup_{j=1}^{N_{t}} \{ \xi^{t}_{j}, v^{t}_{j} \} \}$. Inspired by the Robbins-Siegmund lemma (see Lemma \ref{three-useful-lemma}-(i)), we know that as long as $\mathbb{E}[ \| e^{k}_{1} + e^{k}_{2} + e^{k}_{3} + e^{k}_{4} \|^{2} \!\mid\! \mathcal{F}_{k}]$ is summable, we can show that $\|G_{N/\gamma}(x^{k})\| \to 0$ a.s. as $k\to \infty$. To this end, we first analyze the bias and moment properties of errors defined in \eqref{three-errors}. The proof of the following lemma can be found in the appendix.

\begin{lemma}[Bias and moment properties]\label{bias-moment-lemma}\em
    Suppose Assumption $\mathrm{D}$ holds. Consider the errors defined in \eqref{three-errors}. Then the following hold a.s. for any $k\geq 0$.

    (i) $\mathbb{E}[ e^{k}_{2, j} \!\mid\! \mathcal{F}_{k} ] = \mathbb{E}[ e^{k}_{4, j} \!\mid\! \mathcal{F}_{k} ] = 0$ for any $j = 1, \cdots, N_{k}$.
    
    (ii) $\| e^{k}_{1} \|^{2} \leq \eta^{2}_{k} L^{2}_{1} n^{2}$, $\mathbb{E}[ \| e^{k}_{2} \|^{2} \!\mid\! \mathcal{F}_{k} ] \leq \tfrac{16\sqrt{2\pi}L^{2}_{0}n}{N_{k}}$, $\mathbb{E}[ \| e^{k}_{3} \|^{2} \!\mid\! \mathcal{F}_{k} ] \leq \tfrac{(L^{y}_{0})^{2}n^{2}\epsilon_{k}}{\eta^{2}_{k}}$, and
    \begin{equation*}
        \mathbb{E}[ \|e^{k}_{4}\|^{2} \!\mid\! \mathcal{F}_{k} ] \leq \tfrac{4(N-1) \left[ M^{2}_{\theta} + \eta^{2}_{k} L^{2}_{1} n^{2} + 16\sqrt{2\pi}L^{2}_{0}n + \eta^{-2}_{k}(L^{y}_{0})^{2}n^{2}\epsilon_{k} \right]}{N_{k}},
    \end{equation*}
    where $L_{0} > 0, L_{1} > 0, L^{y}_{0} > 0, M_{\theta} > 0$ are given in Assumption $\mathrm{D}$. $\hfill \Box$
\end{lemma}

\begin{theorem}[Almost sure convergence of SAMGR]\label{a.s.-SAMGR}\em
    Suppose Assumption $\mathrm{D}$ holds. Consider the sequence $\{ x^{k} \}_{k\geq 0}$ generated by {\bf SAMGR} with a constant stepsize $\gamma < \min \{ \tfrac{N}{L_{1}}, N \}$. For any $k\geq 0$, we define $\{ \eta_{k}, N_{k}, t_{k} \}_{k\geq 0}$ as
    \begin{equation}\label{SAMGR-parameters-choices}
        \eta_{k} \triangleq n^{-b}(k+1)^{-(\frac{1}{2}+\frac{1}{2}\delta)}, \; N_{k} \triangleq \lceil n^{a}(k+1)^{1+\delta} \rceil, \; \textrm{and } \; t_{k} \triangleq \lceil n^{e}(k+1)^{2+2\delta} \rceil
    \end{equation}
    for some $a, b, e \ge 0$ such that $e\geq 2b$ and $\delta > 0$, where $t_{k} > 0$ is the number of \textbf{SA} iterations at the $k$th iteration of \textbf{SAMGR}. Then we have $\|G_{N/\gamma}(x^{k})\|\to 0$ a.s. as $k\to \infty$. Almost surely, if the strict copositivity condition holds at the limiting point $x^\infty$ of the sequence $\{x^k\}_{k\geq 0}$, then $x^\infty$ is an isolated solution of $\mathrm{VI}\:(X, F)$.
\end{theorem}
\begin{proof}
    It follows from Lemma \ref{lower-level-error} and the expression for $t_{k}$ in \eqref{SAMGR-parameters-choices} that $\epsilon_{k} = \mathcal{O}(t^{-1}_{k}) = \mathcal{O}(n^{-e}(k+1)^{-(2+2\delta)})$. Therefore, by Lemma \ref{bias-moment-lemma}, we may claim the summability of $\mathbb{E}[ \| e^{k}_{1} + e^{k}_{2} + e^{k}_{3} + e^{k}_{4} \|^{2} \!\mid\! \mathcal{F}_{k} ]$. By invoking Lemma \ref{three-useful-lemma}-(i) on \eqref{SAMGR-recursion}, we have $\|G_{N/\gamma}(x^{k})\|\to 0$ a.s. as $k\to \infty$. The convergence to an isolated solution of $\mathrm{VI}\:(X, F)$ under strict copositivity follows immediately from Lemma \ref{copositivity}.
\end{proof}

Next we derive the complexity results of \textbf{SAMGR}. The following lemma gives an upper bound on the summation $\sum_{k=\lceil \lambda K \rceil}^{K} \mathbb{E}[ \| e^{k}_{1} + e^{k}_{2} + e^{k}_{3} + e^{k}_{4} \|^{2} \!\mid\! \mathcal{F}_{k} ]$, where $\lambda \in (0, 1)$ is given in Algorithm \ref{SAMGR}. We defer the proof to the appendix.

\begin{lemma}\label{SAMGR-summation-upper-bound}\em
    Suppose Assumption $\mathrm{D}$ holds. Consider the sequence $\{x^{k}\}_{k=0}^{\infty}$ generated by {\bf SAMGR} with a constant stepsize $\gamma < \min \{ \tfrac{N}{L_{1}}, N \}$. Consider the errors defined in \eqref{three-errors}. Suppose the total number of iterations $K$ satisfies $K\geq \tfrac{2}{1-\lambda}$, where $\lambda \in (0, 1)$ is given in Algorithm \ref{SAMGR}. Let $\{\eta_k, N_k, t_k\}_{k\geq 0}$ be given by \eqref{SAMGR-parameters-choices}. Define $h(a, b, e) \triangleq \min\{ 2b, 1+a, e-2b \} \geq 0$. Then we have
    \begin{equation*}
        \sum_{k=\lceil \lambda K \rceil}^{K} \mathbb{E}[ \| e^{k}_{1} + e^{k}_{2} + e^{k}_{3} + e^{k}_{4} \|^{2} \!\mid\! \mathcal{F}_{k} ] \leq \Theta(\lambda, N) \: n^{2-h(a, b, e)},
    \end{equation*}
    where $\Theta(\lambda, N) > 0$ is defined in \eqref{Theta-lambda} (see Appendix \ref{proof-SAMGR-summation-upper-bound}). $\hfill \Box$
\end{lemma}

Based on the above lemma, we next provide rate and complexity statements where the dependence on the dimension $n$ is qualified.

\begin{theorem}[Rate and complexity results of SAMGR]\label{rate-complexities-SAMGR}\em
    Suppose Assumption $\mathrm{D}$ holds. Consider the sequence $\{x^{k}\}_{k=0}^{\infty}$ generated by {\bf SAMGR} with a constant stepsize $\gamma < \min \{ \tfrac{N}{L_{1}}, N \}$. Suppose the total number of iterations $K$ satisfies $K\geq \tfrac{2}{1-\lambda}$, where $\lambda \in (0, 1)$ is given in Algorithm \ref{SAMGR}. Let $\{\eta_k, N_k, t_k\}_{k\geq 0}$ be given by \eqref{SAMGR-parameters-choices}. Then the following rate and complexity results hold.

    (i) (sublinear rate) We have that
    \begin{equation}\label{expected-residual-sublinear}
        \mathbb{E} \left[ \| G_{N/\gamma}(x^{R_{K}}) \|^{2} \right] \leq \tfrac{\mathbb{E} \left[ \theta_{c}(x^{\lceil \lambda K \rceil}) \right] \,-\,\theta^{\ast}_{c} \,+\, \Theta(\lambda, N)n^{2-h(a, b, e)}}{(1-\frac{\gamma L_{1}}{N})\frac{\gamma}{4N}(1-\lambda)K},
    \end{equation}
    where $\theta^{\ast}_{c} \triangleq \min_{x\in X} \theta_{c}(x)$, $h(a, b, e)$ and constant $\Theta(\lambda, N)$ are defined in Lemma \ref{SAMGR-summation-upper-bound}.

    (ii) (iteration and sample complexities) Suppose that the stepsize is chosen as $\gamma \triangleq \tfrac{N}{\theta_{\gamma} L_{1}} < \min \{ \tfrac{N}{L_{1}}, N \}$ for some $\theta_{\gamma} > 1$. Then the iteration and sample complexities required to guarantee $\mathbb{E} [\| G_{N/\gamma}(x^{R_{K}}) \|^{2}] \leq \epsilon$ are as follows.

    (ii-a) The iteration complexity is $K_\epsilon = \mathcal{O}( n^{2-h(a, b, e)} \epsilon^{-1} )$.

    (ii-b) The sample complexity is $S_{\epsilon} = \mathcal{O}( n^{(4+a+2\delta)-(2+\delta)h(a, b, e)}\epsilon^{-(2+\delta)} )$.

    (ii-c) Both iteration and sample complexities of \textbf{SA} are given by
    \begin{equation*}
        K^{\textrm{SA}}_{\epsilon} = S^{\textrm{SA}}_{\epsilon} = \mathcal{O}( n^{(8+a+e+6\delta)-(4+3\delta)h(a, b, e)} \epsilon^{-(4+3\delta)} ).
    \end{equation*}

    In particular, for the choice $(a, b, e) = (1, 1, 4)$, we have that $h(a, b, e) = 2$. Consequently, the above complexity bounds reduce to $\mathcal{O}(\epsilon^{-1})$, $\mathcal{O}(n\epsilon^{-(2+\delta)})$, and $\mathcal{O}(n^{5}\epsilon^{-(4+3\delta)})$, respectively.
\end{theorem}
\begin{proof}
    (i) By summing \eqref{SAMGR-recursion} over $k = \lceil \lambda K \rceil, \cdots, K$ and taking expectations on both sides, it leads to
    \allowdisplaybreaks
    \begin{align*}
        &(1-\tfrac{\gamma L_{1}}{N})\tfrac{\gamma}{4N} \sum_{k=\lceil \lambda K \rceil}^{K} \mathbb{E} \left[ \|G_{N/\gamma}(x^{k})\|^{2} \right] \\
        &\leq \mathbb{E} \left[ \theta_{c}(x^{\lceil \lambda K \rceil}) \right] - \theta^{\ast}_{c} + (1-\tfrac{\gamma L_{1}}{2N})\tfrac{\gamma}{N} \sum_{k=\lceil \lambda K \rceil}^{K} \mathbb{E}[ \| e^{k}_{1} + e^{k}_{2} + e^{k}_{3} + e^{k}_{4} \|^{2} ],
    \end{align*}
    where $\theta^{\ast}_{c} \triangleq \min_{x\in X} \theta_{c}(x)$. By the definition of $R_{K}$ and Lemma \ref{SAMGR-summation-upper-bound}, as well as the fact that $\gamma < \min \{ \tfrac{N}{L_{1}}, N \}$, and hence $(1-\tfrac{\gamma L_{1}}{2N})\tfrac{\gamma}{N} < 1$, we have that
    \begin{align*}
        & (K-\lceil \lambda K \rceil+1) \mathbb{E} \left[ \|G_{N/\gamma}(x^{R_{K}})\|^{2} \right] \leq \tfrac{\mathbb{E} \left[ \theta_{c}(x^{\lceil \lambda K \rceil}) \right] \,-\,\theta^{\ast}_{c} \,+\, \Theta(\lambda, N)n^{2-h(a, b, e)}}{(1-\frac{\gamma L_{1}}{N})\frac{\gamma}{4N}}.
    \end{align*}
    Since $(K-\lceil \lambda K \rceil+1) \geq (1-\lambda) K$, the result follows.

    (ii) Let the stepsize be $\gamma \triangleq \tfrac{N}{\theta_{\gamma} L_{1}} < \min \{ \tfrac{N}{L_{1}}, N \}$ for some $\theta_{\gamma} > 1$. Hence we have $(1-\tfrac{\gamma L_{1}}{N})\tfrac{\gamma}{4N} = \tfrac{\theta_{\gamma}-1}{4 \theta_{\gamma}^{2} L_{1}} = \mathcal{O} ( \tfrac{1}{L_{1}} )$. It follows that
    \begin{equation}\label{rate-complexities-SAMGR-proof-eqn1}
        \mathbb{E} \left[ \| G_{N/\gamma}(x^{R_{K}}) \|^{2} \right] \leq \mathcal{O} \left( \tfrac{n^{2-h(a, b, e)}}{K} \right).
    \end{equation}
    Then the iteration complexity $K_{\epsilon}$ in (ii-a) follows from \eqref{rate-complexities-SAMGR-proof-eqn1} immediately. The overall sample complexity is
    \allowdisplaybreaks
    \begin{align*}
        S_{\epsilon} = 2 \sum_{k=0}^{K_{\epsilon}} N_{k} &\overset{\eqref{SAMGR-parameters-choices}}{=} 2 \sum_{k=0}^{K_{\epsilon}} \lceil n^{a}(k+1)^{1+\delta} \rceil \leq n^{a} \mathcal{O}(K^{2+\delta}_{\epsilon}) \\
        &= \mathcal{O}(n^{(4+a+2\delta)-(2+\delta)h(a, b, e)}\epsilon^{-(2+\delta)}),
    \end{align*}
    where the factor of two accounts for sampling both $\bv \in \eta_{k} \mathbb{S}$ and $\bxi \in \Xi$ at each iteration. Then we complete the proof of (ii-b). To show the overall complexities of \textbf{SA} in (ii-c), we note that
    \allowdisplaybreaks
    \begin{align*}
        &K^{\textrm{SA}}_{\epsilon} = S^{\textrm{SA}}_{\epsilon} = 2\sum_{k=0}^{K_{\epsilon}} N_{k} t_{k} \overset{\eqref{SAMGR-parameters-choices}}{\leq} \mathcal{O} \left( \sum_{k=0}^{K_{\epsilon}} n^{a+e} (k+1)^{3+3\delta} \right) \\
        &\leq n^{a+e} \mathcal{O}(K^{4+3\delta}_{\epsilon}) = \mathcal{O}( n^{(8+a+e+6\delta)-(4+3\delta)h(a, b, e)} \epsilon^{-(4+3\delta)} ),
    \end{align*}
    which completes the proof. By setting $(a, b, e) = (1, 1, 4)$, we obtain the stated specific complexity results.
\end{proof}

\section{Applications and numerics}\label{Sec-6}

In this section, we apply \textbf{SSGR}, \textbf{SAGR}, and \textbf{SAMGR} schemes to three distinct applications.

\subsection{Stochastic network congestion problems}

Consider $N$ players competing on a directed graph $\mathcal{G} = (\mathcal{V}, \mathcal{L})$, where $\mathcal{V}$ and $\mathcal{L}$ denote the sets of nodes and directed links, respectively. Suppose that $|\mathcal{V}| = N$ and each player is located at a distinct node, where the flow of the $i$th player on the link $\ell \in \mathcal{L}$ is denoted by $x^{\ell}_{i}$. Let $x_i \triangleq (x_i^\ell)_{\ell\in\mathcal L}$ denote the $i$th player's flow over different links, subject to the lower and upper bounds $\mathbf{lb}\in\mathbb R_+^{|\mathcal L|}$ and $\mathbf{ub}\in\mathbb R_+^{|\mathcal L|}$. Suppose $X_{i} \subseteq \mathbb{R}^{|\mathcal{L}|}_{+}$ captures the $i$th player's flow constraints. Let $x^{\ell} \triangleq (x^{\ell}_{i})_{i=1}^{N}$ denote the vector of flows of $N$ different players on the link $\ell \in \mathcal{L}$. We denote the random flow capacity on link $\ell \in \mathcal{L}$ by $b^\ell(\bxi)$ and assume that it is uniformly upper bounded by $b^{\ell}(\bxi) \leq b^{\ell}_{\max}$. The $i$th player solves the following parameterized problem:
\begin{equation}\label{G-net}
    \min_{x_i \in X_i} f_i(x_i,x_{-i}) \triangleq \sum_{\ell \in \mathcal{L}} \mathbb{E}\left[ \tfrac{M_{G}}{b^{\ell}(\bxi)-\|x^\ell\|_2} - \tfrac{\beta(\bxi)(x_i^\ell)^2}{2} \right], ~ \forall i\in [N], \tag{\ensuremath{G^{\mathrm{net}}}}
\end{equation}
where $M_{G} > 0$ and each player faces a stochastic optimization problem involving a concave quadratic utility term with $\mathbb{E}[\beta(\bxi)] > 0$. The following lemma shows that the stochastic network congestion game \eqref{G-net} satisfies the \ref{QG} property. The proof can be found in the appendix.

\begin{lemma}\label{QG-network-game}\em 
    Consider the stochastic $N$-player game \eqref{G-net}. Suppose that each $X_i$ is nonempty, convex, and compact. Suppose that for any $\ell \in \mathcal{L}$, the bound $d^{\ell}_{\min} \leq \| x^\ell \|_2 \leq d^{\ell}_{\max}$ holds for some constants $d^{\ell}_{\min}, d^{\ell}_{\max} > 0$. If we have that
    \begin{equation*}
        \tfrac{M_{G}}{d^{\ell}_{\max} (b_{\rm max}^{\ell}-d_{\min}^{\ell})^{2}} - \mathbb{E}[\beta(\bxi)] \triangleq \eta^{\ell} > 0, ~ \forall \ell \in \mathcal{L},
    \end{equation*}
    then the map $F$ satisfies the \ref{QG} property with $\eta \triangleq \min_{\ell \in \mathcal{L}} \eta^{\ell} > 0$. $\hfill \Box$
\end{lemma}

\begin{figure}[htp]
    \centering
    \begin{minipage}[c]{0.38\textwidth}
        \centering
        \resizebox{0.6\linewidth}{!}{
        \begin{tikzpicture}[->, >=Stealth, node distance=2cm]
            % Nodes
            \node[circle, draw, minimum size=1cm] (1) {1};
            \node[circle, draw, minimum size=1cm, right=of 1] (2) {2};
            \node[circle, draw, minimum size=1cm, below=of 1] (3) {3};
            \node[circle, draw, minimum size=1cm, below=of 2] (4) {4};

            % Edges
            \draw (1) -- (2) node[midway, above] {1};
            \draw (2) -- (4) node[midway, right] {3};
            \draw (3) -- (4) node[midway, below] {4};
            \draw (1) -- (3) node[midway, left] {2};
            \draw (4) -- (1) node[pos=0.3, below] {6};
            \draw (2) -- (3) node[pos=0.3, below] {5};
        \end{tikzpicture}
        }
    \end{minipage}
    \hfill
    \begin{minipage}[c]{0.6\textwidth}
        \vspace{-10pt}
        \begin{equation*}
        \begin{aligned}
            X_1 &\triangleq \{ \mathbf{lb}\leq x_1\leq \mathbf{ub} \mid x^6_1 = x^1_1 + x^2_1 \}, \\
            X_2 &\triangleq \{ \mathbf{lb}\leq x_2\leq \mathbf{ub} \mid x^1_2 = x^3_2 + x^5_2 \}, \\
            X_3 &\triangleq \{ \mathbf{lb}\leq x_3\leq \mathbf{ub} \mid x^2_3 + x^5_3 = x^4_3 \}, \\
            X_4 &\triangleq \{ \mathbf{lb}\leq x_4\leq \mathbf{ub} \mid x^3_4 + x^4_4 = x^6_4 \}.
        \end{aligned}
        \end{equation*}
    \end{minipage}
    \caption{The directed graph and flow constraints for the network congestion problem.}
    \label{graph-flow-network-congestion-game}
    \vspace{-15pt}
\end{figure}

Let us consider the directed graph $\mathcal{G} = (\mathcal{V}, \mathcal{L})$ in Figure \ref{graph-flow-network-congestion-game}. There are $N = 4$ players, and each is located at a distinct node. Suppose that the incoming flow equals the outgoing flow at each node, as specified by the flow constraints, where \textbf{lb} and \textbf{ub} denote the lower and upper bounds on each player’s flow, respectively. In this example, we assume that $M_{G} = 5 \times 10^{4}$, $b^{\ell}(\bxi) \sim \mathrm{U}\:[44, 45]$ for any $\ell \in \mathcal{L}$, $\beta(\bxi) = 2 + 0.2\bxi$ where $\bxi \sim \mathrm{U}\:[-1, 1]$, $\mathbf{lb} = 4\mathbf{e}_{6}$, and $\mathbf{ub} = 12\mathbf{e}_{6}$. We compare our \textbf{SSGR} and \textbf{SAGR} schemes with the stochastic Popov method \cite{vankov-nedic-sankar-2023} (requiring two projections and a single sample for each step) and the stochastic extragradient (\textbf{SEG}) method \cite{kannan-shanbhag-2019} (requiring two projections and samples for each step), as well as their asynchronous versions on such a stochastic networked equilibrium problem. All curves are averaged over $20$ runs, and each scheme is terminated after $300$ projection evaluations.

\begin{figure}[htp]
    \centering
    \begin{subfigure}{0.24\textwidth}
        \centering
        \includegraphics[width=\textwidth]{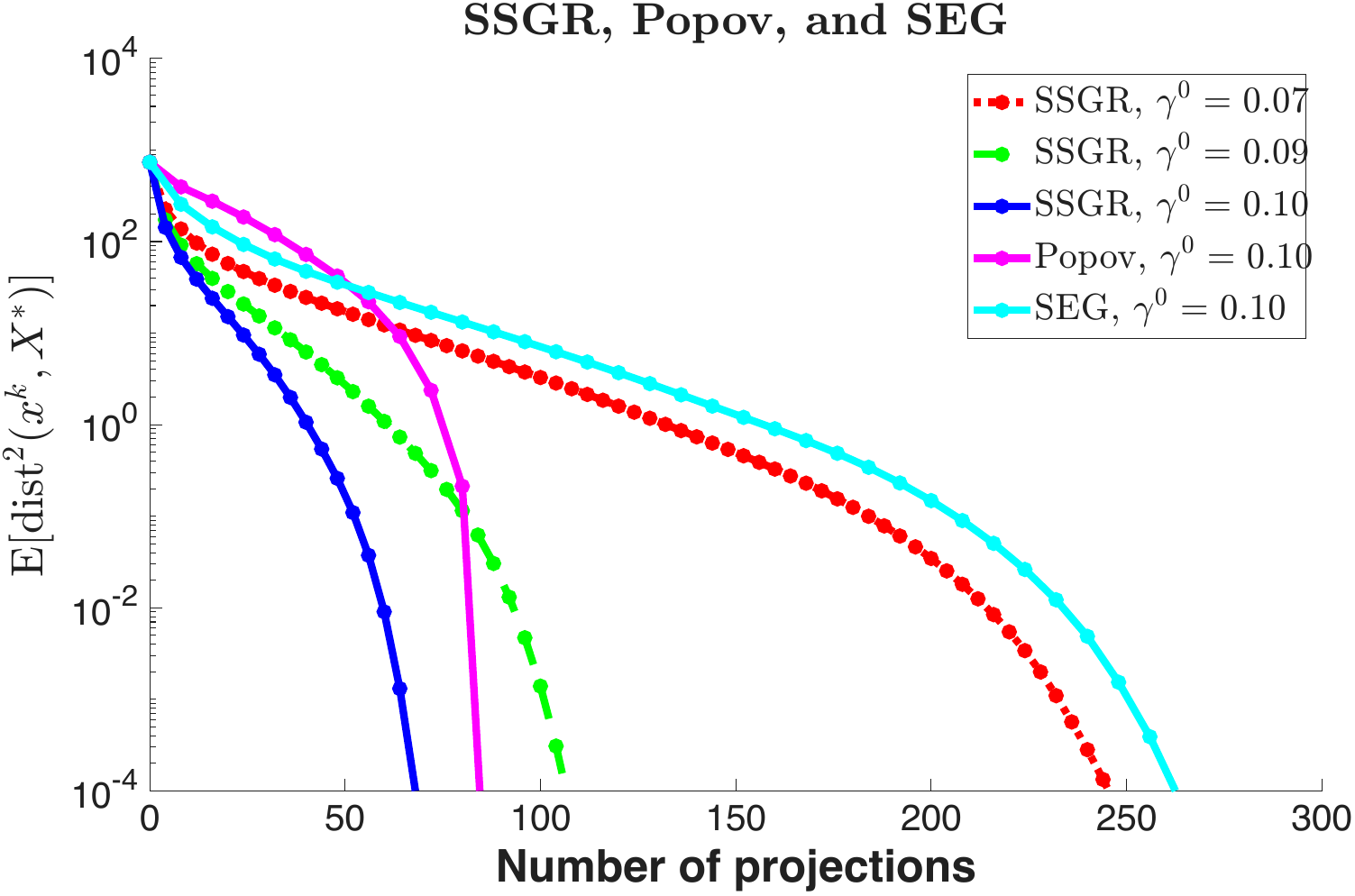}
        \caption{}
    \end{subfigure}
    \hfill
    \begin{subfigure}{0.24\textwidth}
        \centering
        \includegraphics[width=\textwidth]{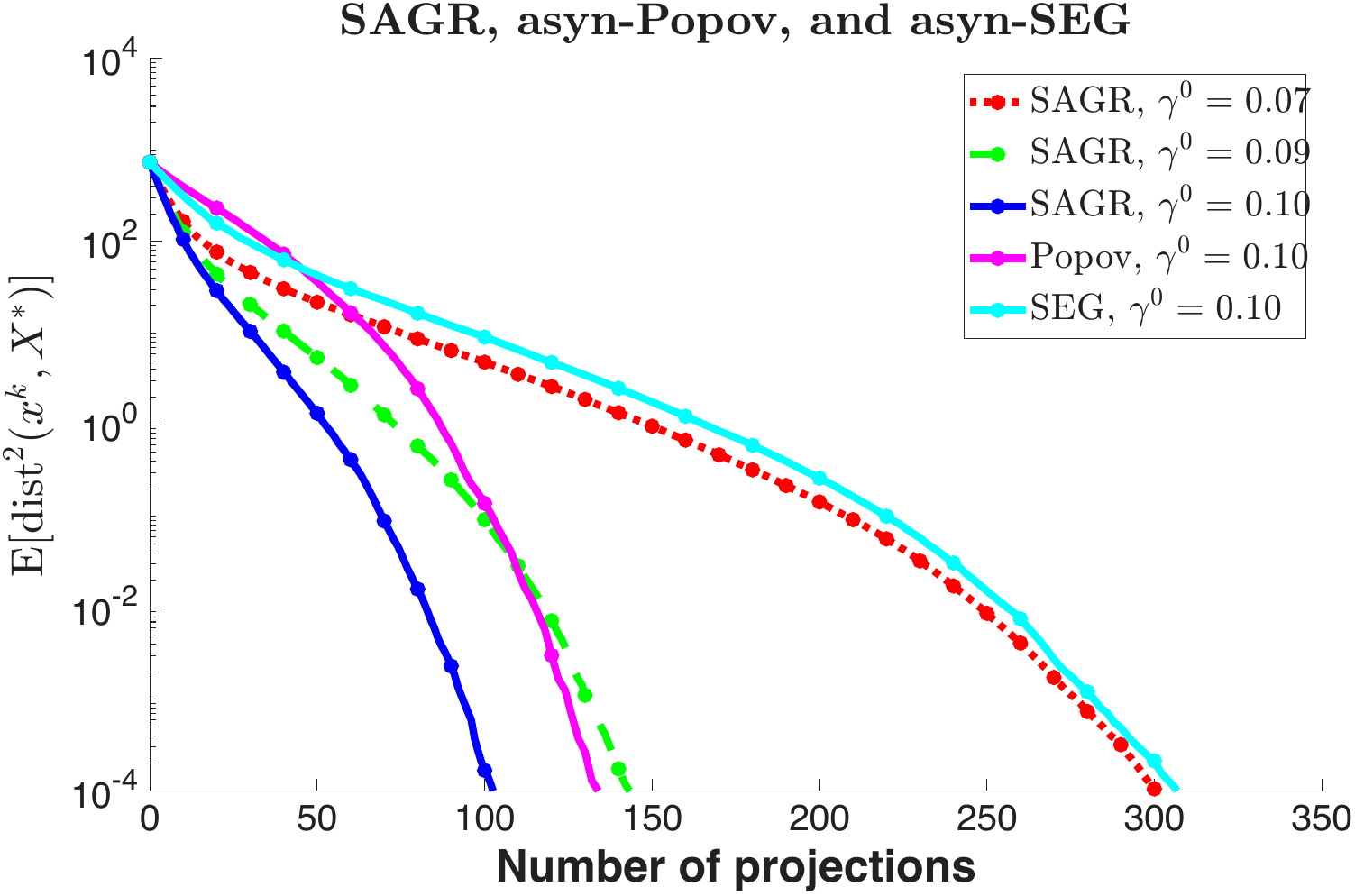}
        \caption{}
    \end{subfigure}
    \hfill
    \begin{subfigure}{0.24\textwidth}
        \centering
        \includegraphics[width=\textwidth]{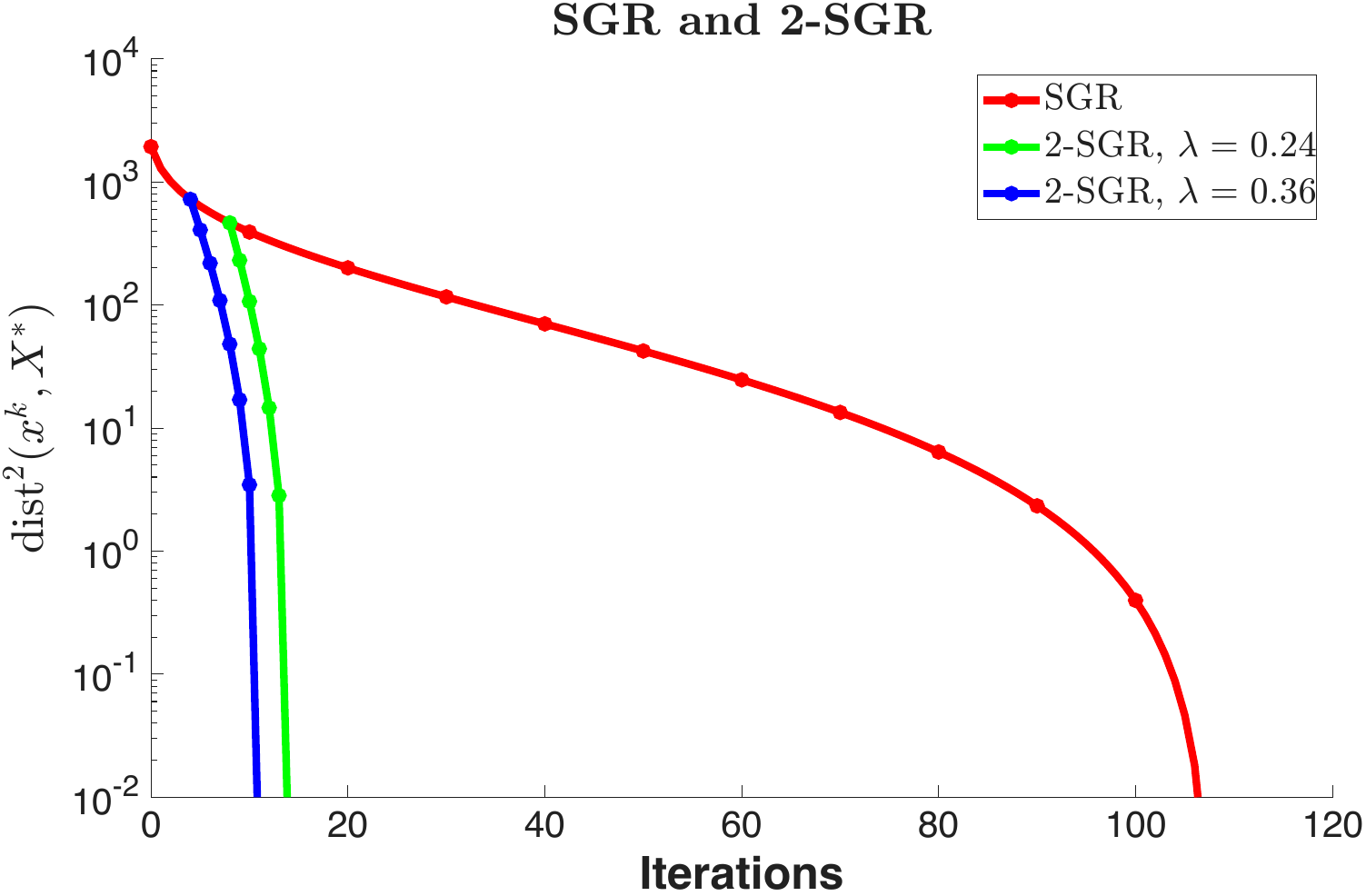}
        \caption{}
    \end{subfigure}
    \hfill
    \begin{subfigure}{0.24\textwidth}
        \centering
        \includegraphics[width=\textwidth]{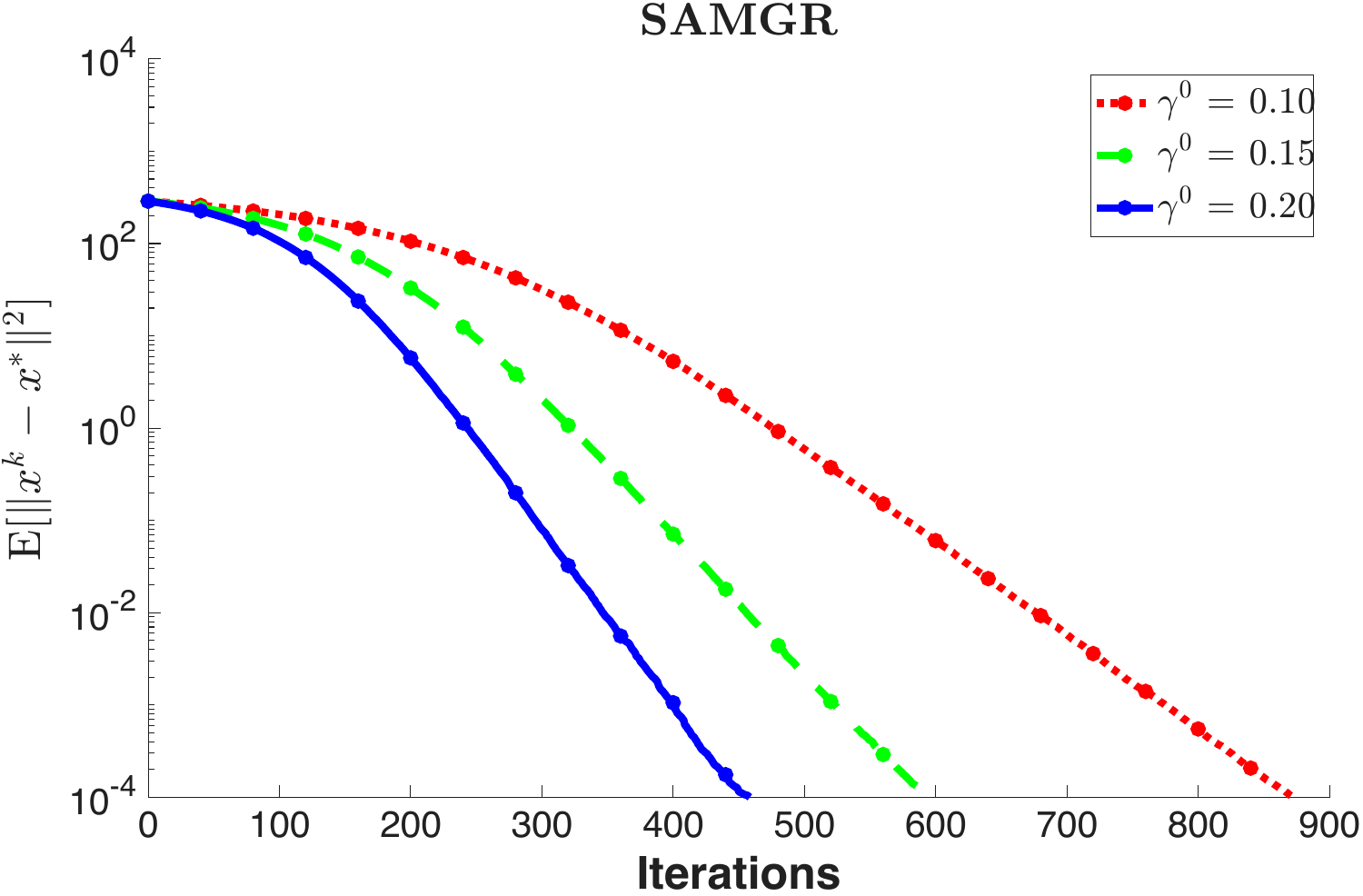}
        \caption{}
    \end{subfigure}
    \vspace{-0.1in}
    \caption{(a): \textbf{SSGR}, Popov, and \textbf{SEG}; (b): \textbf{SAGR}, asyn-Popov, and asyn-\textbf{SEG}; (c): \textbf{SGR} and \textbf{2-SGR}; and (d): \textbf{SAMGR}.}
    \label{numerics}
\end{figure}
\vspace{-0.1in}

The comparison results in Figure \ref{numerics}-(a) and Figure \ref{numerics}-(b) show that, under the same choice of $\gamma^{0}$, \textbf{SSGR} competes well with the Popov and SEG schemes and similar behavior emerges in the asynchronous setting.

\subsection{Nonconvex-nonconcave minimax problems} 
Consider the following smooth nonconvex-nonconcave minimax problem denoted by \eqref{minimax}:
\begin{equation*}\label{minimax}
    \min_{x\in X} \max_{y\in Y} L(x, y) \triangleq 50x + (x-20) \bigg[ \tfrac{1}{20} (y-20) - \tfrac{1}{50}(x-20) + \tfrac{1}{500}(y-20)^{2} \bigg], \tag{MM}
\end{equation*}
where $X = Y \triangleq [20, 70]$. We say that $(x^{\ast}, y^{\ast})^{\top}$ is a stationary point of \eqref{minimax} if
\begin{align*}
    \nabla_{x} L(x^{\ast}, y^{\ast})^{\top} (x - x^{\ast}) &\geq 0, ~ \forall x \in X, \\
    -\nabla_{y} L(x^{\ast}, y^{\ast})^{\top} (y - y^{\ast}) &\geq 0, ~ \forall y \in Y.
\end{align*}
We may observe that the above stationarity condition is equivalent to finding a $z^{\ast} = (x^{\ast}, y^{\ast})^{\top}$ such that $F(z^{\ast})^{\top}(z-z^{\ast}) \geq 0$ holds for any $z \in Z \triangleq X \times Y$, where the concatenated map $F$ is defined as
\begin{equation*}
    F(z) \triangleq \begin{bmatrix}
        \nabla_{x} L(x, y) \\
        -\nabla_{y} L(x, y)
    \end{bmatrix}.
\end{equation*}
By computation, we obtain that $Z^{\ast} = \{20\} \times [20,70]$, $\|F(z)\| \leq M_F=\tfrac{115}{2}$ holds for any $z \in Z$, and the \ref{QS} property holds with $\beta = 43$. We compare \textbf{2-SGR} with \textbf{SGR} on such a problem in Figure \ref{numerics}-(c). Observe that \textbf{2-SGR} significantly outperforms \textbf{SGR} in terms of the number of iterations required to achieve similar accuracy. Moreover, choosing a smaller value of $\lambda$ delays entry of the iterate sequence into the second stage, while maintaining a faster linear convergence rate with a smaller value of $q$.

\subsection{Strictly copositive congestion games}
Consider the $N$-player congestion game \eqref{G-con}, where the $i$th player wants to maximize her utility function:
\begin{equation}\label{G-con}
    \max_{x_i \in X_i} f_{i}(x_{i}, x_{-i}) \triangleq U_i(x_i) - x_i \sum_{j \ne i} g_j(x_j), ~ \forall i\in [N]. \tag{\ensuremath{G^{\mathrm{con}}}}
\end{equation}
The concatenation map $F$ associated with \eqref{G-con} may be potentially non-monotone, complicating the computation of NE. Yet when such a game satisfies the strict copositivity condition, NE can be efficiently computed via the \textbf{SAMGR} scheme.

\begin{lemma}\em
    Consider the $N$-player game \eqref{G-con}. Suppose that there are $N = 8$ players, with $X_i \triangleq [0,20]$ for each $i\in [N]$. For each player $i\in [N]$, let $U_{i}(x_{i}) \triangleq -\tfrac{1}{2} x_{i}^2 + \mathbb{E}[80+\bxi]x_{i}$, where $\bxi \sim \mathrm{U} \: [-4, 4]$, and let $g_{j}(x_{j}) \triangleq \tfrac{1}{100}(x_{j}-10)^{2} + 10$ for each $j\neq i$. Then $x^{\ast} = (10, \cdots, 10)^{\top}$ is a stationary point of $\theta_{c}$ on $X$ and an NE of \eqref{G-con}.
\end{lemma}
\begin{proof}
    The $i$th player's parameterized optimization problem is
    \begin{equation*}
        \min_{x_{i}\in X_{i}} f_{i}(x_{i}, x_{-i}) \triangleq \left( \tfrac{1}{2}x^{2}_{i} - \mathbb{E}[80 + \bxi]x_{i} \right) + x_{i}\sum_{j\neq i} \left( \tfrac{1}{100}(x_{j}-10)^{2} + 10 \right), ~ \forall i\in [N].
    \end{equation*}
    We may verify that such a game is convex since $\nabla^{2}_{x_{i}x_{i}}f_{i}(x_{i}, x_{-i}) = 1 > 0$ for any $i\in [N]$. Consider $x^{\ast} = (10, \cdots, 10)^{\top}$. Next we show that $F$ is not monotone but the strict copositivity condition in Lemma \ref{copositivity} holds at $x^{\ast}$.  Recall that $F$ is monotone on $X$ if and only if $\mathbf{J}F(x) \succeq 0$  for all $x\in X$~\cite[Proposition~2.3.2]{facchinei-pang-2003}. By computation, we may obtain the Jacobian matrix $\mathbf{J}F(x)$ as follows.
    \begin{equation*}
        \mathbf{J}F(x) = \begin{bmatrix}
        1 & 0.02(x_{1}-10) & \cdots & 0.02(x_{1}-10) \\
        0.02(x_{2}-10) & 1 & \cdots & 0.02(x_{2}-10) \\
        \vdots & \vdots & \ddots & \vdots \\
        0.02(x_{8}-10) & 0.02(x_{8}-10) & \cdots & 1
        \end{bmatrix}.
    \end{equation*}
    At $x'=(0,\ldots,0)^{\top}$, the Jacobian $\mathbf{J}F(x')$ has an eigenvalue equal to $-0.4$. Consequently, $\mathbf{J}F(x') \not\succeq 0$, which implies that $F$ is not monotone on $X$.  Next, we verify the strict copositivity condition. It is not difficult to see that $0\in \nabla \theta_{c}(x^{\ast}) + \mathcal{N}_{X}(x^{\ast})$ by \eqref{regularized-gap-function-gradient}, i.e., $x^{\ast}$ is a stationary point. We want to show that $\mathbf{J}F(x^*)$ is strictly copositive on $\mathcal{T}_{X}(x^{\ast})\cap (-F(x^{\ast}))^{\ast}$. Indeed, $\mathbf{J}F(x^{\ast})$ is an identity matrix $\textrm{\bf I}_{8\times 8}$. Therefore, $\mathbf{J}F(x^{\ast})$ is strictly copositive on the whole space $\mathbb{R}^{8}$, and hence also strictly copositive on $\mathcal{T}_{X}(x^{\ast}) \cap (-F(x^{\ast}))^{\ast}$. By the definition of an NE, we may show that $x^{*}$ is an NE of \eqref{G-con}.
\end{proof}

Due to the unavailability of $\nabla \theta_{c}(x)$, we use $\mathbb{E}[\| x^{k} - x^{\ast} \|^{2}]$ as the convergence measure and test three different constant stepsizes in Figure \ref{numerics}-(d). All curves are averaged over $20$ runs. We may observe that a larger step size performs better than a smaller step size given the same number of iterations. 

\section{Concluding remarks}\label{Sec-7} 

The consideration of nonconvexity in continuous-strategy static noncooperative games is at a relatively nascent stage. Few efficient schemes exist with last-iterate convergence guarantees for contending with games with smooth expectation-valued and potentially nonconvex objectives. To this end, we develop \textbf{SSGR} and \textbf{SAGR} schemes with a.s. convergence and sublinear rate guarantees for computing a QNE under the \ref{QG} property. Surprisingly, this claim can be strengthened to computing an NE when the game admits a smooth and pseudoconvex potential function. Notably, in a deterministic setting, linear rates can be derived under the \ref{QS} property. We then consider the \textbf{SAMGR} scheme for computing equilibria in convex but possibly non-monotone games with expectation-valued objectives, which allows us to derive a sublinear rate for the expected residual and establish convergence to a solution under the strict copositivity condition. We show that the prescribed properties emerge in applications and provide promising numerics. 

\appendix
\section{Auxiliary proofs}

\subsection{Proof of Lemma \ref{SAGR-stepsize-asymptotics}}\label{proof-SAGR-stepsize-asymptotics}
\begin{proof}
    (i) By definition, we know that $\Gamma_{k}(i) = \sum_{t=0}^{k} \mathbf{1}_{E_{i, t}}$, where $E_{i, t}$ is the event that player $i$ is selected at iteration $t$ and $\mathbf{1}_{E_{i, t}}$ is the indicator function of the event $E_{i, t}$. The events $\{ E_{i, t} \}_{t=0}^{k}$ are i.i.d. with mean $\mathbb{E}[\mathbf{1}_{E_{i, t}}] = p_{i}$ for any $i\in [N]$. By the law of iterated logarithm \cite[pp.\,476--479]{dudley-2002}, for any deterministic $\Tilde{d} \in (0, \tfrac{1}{2})$, we have
    \begin{equation*}
        \lim_{k\to \infty} \tfrac{|\Gamma_{k}(i) - (k+1)p_{i}|}{(k+1)^{1/2+\Tilde{d}}} = 0, ~ \textrm{a.s.} ~ \forall i\in [N].
    \end{equation*}
    Therefore, for some sufficiently large $\Tilde{k} \triangleq \Tilde{k}(\Tilde{d}, N)$, for any $i\in [N]$ and any $k \geq \Tilde{k}$, 
    \begin{equation}\label{proof-SAGR-stepsize-asymptotics-eqn1}
        \tfrac{|\Gamma_{k}(i) - (k+1)p_{i}|}{(k+1)^{1/2+\Tilde{d}}} \leq 1, ~ \textrm{a.s.} ~ \forall i\in [N],
    \end{equation}
    implying that for any $k \geq \Tilde{k}$,
    \begin{equation*}
        \Gamma_{k}(i) \geq (k+1)p_{i} - (k+1)^{1/2+\Tilde{d}} = \left( (k+1)^{1/2-\Tilde{d}}p_{i} - 1 \right) (k+1)^{1/2+\Tilde{d}}.
    \end{equation*}
    Since $\Tilde{d}\in (0, \tfrac{1}{2})$, the term $(k+1)^{1/2-\Tilde{d}}p_{i}$ tends to infinity as $k$ goes to infinity. Thus, for any $k\geq \Tilde{k}$ (we can continue to choose a larger $\Tilde{k}$, if needed), we have that
    \begin{equation*}
        (k+1)^{1/2-\Tilde{d}}p_{i} - 1 \geq \tfrac{1}{2} (k+1)^{1/2-\Tilde{d}}p_{i}, ~ \textrm{a.s.} ~ \forall i\in [N].
    \end{equation*}
    By combining the above two relations, it follows that for any $k\geq \Tilde{k}$, for any $i\in [N]$,
    \begin{equation*}
        \Gamma_{k}(i) \geq \tfrac{1}{2} (k+1) p_{i}, ~ \textrm{a.s.} ~ \forall i\in [N].
    \end{equation*}
    Therefore, for any $\Tilde{d}\in (0, \tfrac{1}{2})$, we have for any $k\geq \Tilde{k}$ and any $i\in [N]$,
    \begin{equation}\label{proof-SAGR-stepsize-asymptotics-eqn2}
        \gamma^{k}_{i} = \tfrac{1}{\Gamma_{k}(i)} \leq \tfrac{2}{(k+1)p_{i}} \leq \tfrac{2}{k p_{i}}, ~ \textrm{a.s.} ~ \forall i\in [N],
    \end{equation}
    as desired, where the first equality follows from $\gamma_{i}^{k} = \tfrac{1}{\Gamma_k(i)}$ for sufficiently large $k$.
    
    The result (ii) follows directly from \eqref{proof-SAGR-stepsize-asymptotics-eqn2}. For (iii),  we have
    \begin{align*}
        | \gamma^{k}_{i} - \tfrac{1}{k p_{i}} | = | \gamma^{k}_{i} - \tfrac{1}{(k+1) p_{i}} + \tfrac{1}{(k+1) p_{i}} - \tfrac{1}{k p_{i}} | \leq | \gamma^{k}_{i} - \tfrac{1}{(k+1) p_{i}} | + | \tfrac{1}{(k+1) p_{i}} - \tfrac{1}{k p_{i}} |.
    \end{align*}
    Moreover, we further have (recall that $\gamma_{i}^{k} = \tfrac{1}{\Gamma_k(i)}$ holds for sufficiently large $k$)
    \allowdisplaybreaks
    \begin{align*}
        | \gamma^{k}_{i} - \tfrac{1}{(k+1) p_{i}} | &= | \tfrac{1}{\Gamma_k(i)} - \tfrac{1}{(k+1) p_{i}} | = \tfrac{1}{(k+1)p_{i}} \tfrac{1}{\Gamma_k(i)} | (k+1)p_{i} - \Gamma_{k}(i) | \\
        &\overset{\eqref{proof-SAGR-stepsize-asymptotics-eqn2}}{\leq} \tfrac{2}{(k+1)^{2}p^{2}_{i}} | (k+1)p_{i} - \Gamma_{k}(i) | \overset{\eqref{proof-SAGR-stepsize-asymptotics-eqn1}}{\leq} \tfrac{2}{(k+1)^{2}p^{2}_{i}} (k+1)^{1/2+\Tilde{d}} \\
        &= \tfrac{2}{(k+1)^{3/2-\Tilde{d}} p^{2}_{i}} \leq \tfrac{2}{k^{3/2-\Tilde{d}} p^{2}_{\min}} \\
        \mbox{ and } & | \tfrac{1}{(k+1) p_{i}} - \tfrac{1}{k p_{i}} | = \tfrac{1}{p_{i}} \tfrac{1}{(k+1)k} \leq \tfrac{1}{k^{2} p^{2}_{\min}} \leq \tfrac{1}{k^{3/2-\Tilde{d}} p^{2}_{\min}}.
    \end{align*}
    By combining the above two inequalities, we obtain that $| \gamma^{k}_{i} - \tfrac{1}{k p_{i}} | \leq \tfrac{3}{k^{3/2-\Tilde{d}} p^{2}_{\min}}$ for any $k\geq \Tilde{k}$, which completes the proof.
\end{proof}

\subsection{Proof of Lemma \ref{Chung}}\label{proof-Chung}
\begin{proof}
    By Lemma $3$ in \cite[Chapter 2.2]{polyak-1987}, we know that $e^{k}\to 0$ as $k\to \infty$. Therefore, there exists a sufficiently large $\Tilde{k}$ such that for $k\geq \Tilde{k}$ we have $e^{k} \leq 1$. It follows that for $k\geq \Tilde{k}$ we have $e^{k+1} \leq (1-\tfrac{C}{k})e^{k} + \tfrac{A+B}{k^{t+1}}$. By Chung's lemma~\cite[Chapter 2.2]{polyak-1987}, we can derive the desired rate statement.
\end{proof}

\subsection{Proof of Lemma \ref{VI-requirement-deduces-Lipschitz-continuity}}\label{proof-VI-requirement-deduces-Lipschitz-continuity}
\begin{proof}
    (i) By Jensen's inequality, we have that $\| F(x) - F(y) \| = \| \mathbb{E}[\Tilde{F}(x, \xi)] - \mathbb{E}[\Tilde{F}(y, \xi)] \| \leq \mathbb{E}[\| \Tilde{F}(x, \xi) - \Tilde{F}(y, \xi) \|] \leq L_{F}\|x - y\|$. Recall that $y_{c}(x) = \Pi_{X}[x - \tfrac{1}{c}F(x)]$. The Lipschitz continuity of $y_c(\bullet)$ follows from the nonexpansiveness of the Euclidean projection, and we have that
    \begin{align*}
        \| y_{c}(x) - y_{c}(x') \| &\leq \| (x - \tfrac{1}{c}F(x)) - (x' - \tfrac{1}{c}F(x')) \| \leq \| x - x' \| + \tfrac{1}{c} \| F(x) - F(x') \| \\
        &\leq \| x - x' \| + \tfrac{L_{F}}{c} \| x - x' \| = (1 + \tfrac{L_{F}}{c}) \| x - x' \|.
    \end{align*}
    holds for any $x, x' \in X$. 

    (ii) For any $x, x' \in X$ and any realization $\xi$, it follows from \eqref{gap-function} that
    \allowdisplaybreaks
    \begin{align*}
        & \quad | \Tilde{\theta}_{c}(x, y_{c}(x), \xi) - \Tilde{\theta}_{c}(x', y_{c}(x'), \xi) |  \\
        & \leq | \tilde{F}(x, \xi)^{\top} (x - y_{c}(x)) - \tilde{F}(x',\xi)^{\top} (x - y_c(x)) | \\ 
        & \quad + |\tilde{F}(x', \xi)^{\top} (x - y_c(x)) - \tilde{F}(x', \xi)^{\top} (x'-y_c(x')) | \\  
        & \quad + | \tfrac{c}{2}(x - y_c(x))^{\top} (x-y_c(x)) - \tfrac{c}{2}(x-y_c(x))^{\top} (x' - y_c(x')) | \\
        & \quad + | \tfrac{c}{2}(x-y_c(x))^{\top} (x' - y_c(x')) - \tfrac{c}{2}(x'-y_c(x'))^{\top} (x'-y_c(x')) | \\
        &\leq ( 2D_{X}(L_{F}+c) + M_{F}(2 + \tfrac{L_{F}}{c}) ) \| x - x' \|.
    \end{align*}
    Then we may set $L_{0} \triangleq 2D_{X}(L_{F}+c) + M_{F}(2 + \tfrac{L_{F}}{c})$, as desired.

    (iii) For any $x, x' \in X$ and any realization $\xi$, it follows from \eqref{gap-function} that
    \allowdisplaybreaks
    \begin{align*}
        & \quad | \nabla \Tilde{\theta}_{c}(x, y_{c}(x), \xi) - \nabla \Tilde{\theta}_{c}(x', y_{c}(x'), \xi) | \leq \| \Tilde{F}(x, \xi) - \Tilde{F}(x', \xi) \| \\
        & \quad + \| (\mathbf{J} \Tilde{F}(x, \xi) - c\mathbf{I})^{\top} (x - y_{c}(x)) - (\mathbf{J} \Tilde{F}(x', \xi) - c\mathbf{I})^{\top} (x' - y_{c}(x')) \| \\
        & \leq L_{F} \| x - x' \| + \| (\mathbf{J} \Tilde{F}(x, \xi) - c\mathbf{I})^{\top} (x - y_{c}(x)) - (\mathbf{J} \Tilde{F}(x', \xi) - c\mathbf{I})^{\top} (x - y_{c}(x)) \| \\
        & \quad + \| (\mathbf{J} \Tilde{F}(x', \xi) - c\mathbf{I})^{\top} (x - y_{c}(x)) - (\mathbf{J} \Tilde{F}(x', \xi) - c\mathbf{I})^{\top} (x' - y_{c}(x')) \| \\
        & \leq ( 2(L_{F}+M_{J}+c) + D_{X}L_{J} + \tfrac{M_{J}L_{F}}{c} ) \| x - x' \|.
    \end{align*}
    Then we may set $L_{1} \triangleq 2(L_{F}+M_{J}+c) + D_{X}L_{J} + \tfrac{M_{J}L_{F}}{c}$, as desired.

    (iv) For any $y, y'\in X$ and any realization $\xi$, it follows from \eqref{gap-function} that
    \allowdisplaybreaks
    \begin{align*}
        &\quad | \Tilde{\theta}_{c}(x, y, \xi) - \Tilde{\theta}_{c}(x, y', \xi) | \\
        &\leq | \Tilde{F}(x, \xi)^{\top}(x - y) - \Tilde{F}(x, \xi)^{\top}(x - y') | + \tfrac{c}{2} \left| \| x - y \|^{2} - \| x - y' \|^{2} \right| \\
        &\leq \| \Tilde{F}(x, \xi) \| \| y - y' \| + \tfrac{c}{2} \| y - y' \| \| (x - y) + (x - y') \| \leq (M_{F} + c D_{X}) \| \| y - y' \|.
    \end{align*}
    Then we may set $L^{y}_{0} \triangleq M_{F} + c D_{X}$, as desired.
\end{proof}

\subsection{Proof of Lemma \ref{bias-moment-lemma}}\label{proof-bias-moment-lemma}
\begin{proof}
    The proof of (i) is similar to that of \cite[Lemma 3-(i)]{shanbhag-yousefian-2021}. We focus on the proof of (ii). The first bound follows immediately from Lemma \ref{RS-property-grad}-(v), the second bound follows from Lemma \ref{RS-property-grad}-(vi) and the unbiasedness property (i), and the third bound follows from Assumption $\mathrm{(D3)}$ and Lemma \ref{lower-level-error}. Next we derive a second moment bound for $e^{k}_{4}$. Similar to the proof of \cite[Lemma 3-(ii)]{shanbhag-yousefian-2021}, we may obtain that
    \begin{equation*}
        \mathbb{E} \left[ \|e^{k}_{4, j}\|^{2} \!\mid\! \mathcal{F}_{k} \right] = (N-1) \mathbb{E} \left[ \| g^{\eta_{k}, \epsilon_{k}}_{j}(x^{k}) \|^{2} \!\mid\! \mathcal{F}_{k} \right],
    \end{equation*}
    implying that
    \allowdisplaybreaks
    \begin{align*}
        \mathbb{E} \left[ \|e^{k}_{4, j}\|^{2} \!\mid\! \mathcal{F}_{k}  \right] &\leq (N-1) \mathbb{E} \left[ \| \nabla \theta_{c}(x^{k}) + e^{k}_{1} + e^{k}_{2, j} + e^{k}_{3, j} \|^{2} \!\mid\! \mathcal{F}_{k} \right] \\
        &\leq (N-1) \mathbb{E} \left[ 4\| \nabla \theta_{c}(x^{k}) \|^{2} + 4\| e^{k}_{1} \|^{2} + 4\| e^{k}_{2, j} \|^{2} + 4\| e^{k}_{3, j} \|^{2} \!\mid\! \mathcal{F}_{k} \right] \\
        &\leq 4(N-1) \left[ \, M^{2}_{\theta} + \eta^{2}_{k} L^{2}_{1} n^{2} + 16\sqrt{2\pi}L^{2}_{0}n + \eta^{-2}_{k}(L^{y}_{0})^{2}n^{2}\epsilon_{k} \, \right],
    \end{align*}
    where $M_{\theta} > 0$ is defined in assumption $\mathrm{(D4)}$. Since the unbiasedness property holds, i.e.,  $\mathbb{E}[ e^{k}_{4, j} \!\mid\! \mathcal{F}_{k} ] = 0$, the required result follows.
    \begin{align*}
        \mathbb{E}[ \|e^{k}_{4}\|^{2} \!\mid\! \mathcal{F}_{k} ] \leq \tfrac{4(N-1) \left[ M^{2}_{\theta} + \eta^{2}_{k} L^{2}_{1} n^{2} + 16\sqrt{2\pi}L^{2}_{0}n + \eta^{-2}_{k}(L^{y}_{0})^{2}n^{2}\epsilon_{k} \right]}{N_{k}}.
    \end{align*}
\end{proof}

\subsection{Proof of Lemma \ref{SAMGR-summation-upper-bound}}\label{proof-SAMGR-summation-upper-bound}
\begin{proof}
    By Lemma \ref{bias-moment-lemma}, we have that
    \allowdisplaybreaks
    \begin{align*}
        &\sum_{k=\lceil \lambda K \rceil}^{K} \mathbb{E}[ \| e^{k}_{1} + e^{k}_{2} + e^{k}_{3} + e^{k}_{4} \|^{2} \!\mid\! \mathcal{F}_{k} ] \leq 4 \sum_{k=\lceil \lambda K \rceil}^{K} \mathbb{E}[ \| e^{k}_{1} \|^{2} \!\mid\! \mathcal{F}_{k} ] \\
        &\quad + 4 \sum_{k=\lceil \lambda K \rceil}^{K} \mathbb{E}[ \| e^{k}_{2} \|^{2} \!\mid\! \mathcal{F}_{k} ] + 4 \sum_{k=\lceil \lambda K \rceil}^{K} \mathbb{E}[ \| e^{k}_{3} \|^{2} \!\mid\! \mathcal{F}_{k} ] + 4 \sum_{k=\lceil \lambda K \rceil}^{K} \mathbb{E}[ \| e^{k}_{4} \|^{2} \!\mid\! \mathcal{F}_{k} ] \\
        &\leq \sum_{k=\lceil \lambda K \rceil}^{K} 4\eta^{2}_{k} L^{2}_{1} n^{2} + \sum_{k=\lceil \lambda K \rceil}^{K} \tfrac{64\sqrt{2\pi}L^{2}_{0}n}{N_{k}} + \sum_{k=\lceil \lambda K \rceil}^{K} \tfrac{4(L^{y}_{0})^{2}n^{2}\epsilon_{k}}{\eta^{2}_{k}} \\
        &\quad + \sum_{k=\lceil \lambda K \rceil}^{K} \tfrac{16(N-1) \left[ M^{2}_{\theta} \, + \, \eta^{2}_{k} L^{2}_{1} n^{2} \, + \, 16\sqrt{2\pi}L^{2}_{0}n \, + \, \eta^{-2}_{k}(L^{y}_{0})^{2}n^{2}\epsilon_{k} \right]}{N_{k}}.
    \end{align*}
    By the definitions of $\eta_{k}$, $N_{k}$, and $t_{k}$ given in \eqref{SAMGR-parameters-choices}, we have
    \allowdisplaybreaks
    \begin{align*}
        &\sum_{k=\lceil \lambda K \rceil}^{K} 4\eta^{2}_{k} L^{2}_{1} n^{2} = \sum_{k=\lceil \lambda K \rceil}^{K} \tfrac{4 L^{2}_{1} n^{2}}{n^{2b}(k+1)^{1+\delta}} \leq \sum_{k=\lceil \lambda K \rceil}^{K} \tfrac{4 L^{2}_{1} n^{2-2b}}{k+1}, \\
        &\sum_{k=\lceil \lambda K \rceil}^{K} \tfrac{64\sqrt{2\pi}L^{2}_{0}n}{N_{k}} \leq \sum_{k=\lceil \lambda K \rceil}^{K} \tfrac{64\sqrt{2\pi}L^{2}_{0}n}{n^{a}(k+1)^{1+\delta}} \leq \sum_{k=\lceil \lambda K \rceil}^{K} \tfrac{64\sqrt{2\pi}L^{2}_{0}n^{2-(1+a)}}{k+1}, \\
        &\sum_{k=\lceil \lambda K \rceil}^{K} \tfrac{4(L^{y}_{0})^{2}n^{2}\epsilon_{k}}{\eta^{2}_{k}} \overset{\textrm{Lemma } \ref{lower-level-error}}{\leq} \sum_{k=\lceil \lambda K \rceil}^{K} \tfrac{4(L^{y}_{0})^{2}n^{2}c_{\epsilon}}{\eta^{2}_{k}t_{k}} \leq \sum_{k=\lceil \lambda K \rceil}^{K} \tfrac{4(L^{y}_{0})^{2}c_{\epsilon}n^{2-(e-2b)}}{k+1},
    \end{align*}
    where $c_{\epsilon} \triangleq \max \left\{\tfrac{(c^{2}_{F}+v^{2}_{F})\alpha^{2}_{0}}{2c\alpha_{0}-1}, \: \Gamma\sup_{y\in X}\|y_{0}-y\|^{2} \right\} > 0$. Similarly, we also have that
    \allowdisplaybreaks
    \begin{align*}
        &\sum_{k=\lceil \lambda K \rceil}^{K} \tfrac{16(N-1) \left[ M^{2}_{\theta} \, + \, \eta^{2}_{k} L^{2}_{1} n^{2} \, + \, 16\sqrt{2\pi}L^{2}_{0}n \, + \, \eta^{-2}_{k}(L^{y}_{0})^{2}n^{2}\epsilon_{k} \right]}{N_{k}} \\
        &\leq \sum_{k=\lceil \lambda K \rceil}^{K} \tfrac{16(N-1)M^{2}_{\theta}n^{2-(2+a)}}{k+1} + \sum_{k=\lceil \lambda K \rceil}^{K} \tfrac{16(N-1)L^{2}_{1}n^{2-(a+2b)}}{(k+1)^{2}} \\
        &+ \sum_{k=\lceil \lambda K \rceil}^{K} \tfrac{256(N-1)\sqrt{2\pi}L^{2}_{0}n^{2-(1+a)}}{k+1} + \sum_{k=\lceil \lambda K \rceil}^{K} \tfrac{16(N-1)(L^{y}_{0})^{2}c_{\epsilon}n^{2-(a+e-2b)}}{(k+1)^{2}}.
    \end{align*}
    Next, we derive two upper bounds. Noticing that $\lceil \lambda K \rceil \geq 1$ and that $K\geq \tfrac{2}{1-\lambda}$, and hence $K-1\geq \lceil \lambda K \rceil$, we know from \cite[Lemma 8.26]{beck-2017} that
    \allowdisplaybreaks
    \begin{align*}
        & \sum_{k=\lceil \lambda K \rceil}^{K}\tfrac{1}{k+1} = \tfrac{1}{\lceil \lambda K \rceil+1} + \cdots + \tfrac{1}{K+1} \leq \frac{1}{2} + \int_{\lceil \lambda K \rceil}^{K}\tfrac{1}{t+1}dt \leq \tfrac{1}{2} + \log{\tfrac{K+1}{\lambda K+\lambda}} = \tfrac{1}{2} - \log{\lambda}, \\
        & \sum_{k=\lceil \lambda K \rceil}^{K}\tfrac{1}{(k+1)^{2}} = \tfrac{1}{(\lceil \lambda K \rceil+1)^{2}} + \cdots + \tfrac{1}{(K+1)^{2}} \leq \tfrac{1}{4} + \int_{\lceil \lambda K \rceil}^{K}\tfrac{1}{(t+1)^{2}}dt \leq \tfrac{1}{4} + \tfrac{(1-\lambda)^{2}}{\lambda(3-\lambda)}.  
    \end{align*}
    We define the function $h(a, b, e) \triangleq \min \{ 2b, 1+a, e-2b, 2+a, a+2b, 1+a, a+e-2b \}$ $= \min \{ 2b, 1+a, e-2b \}$. Let $\Theta(\lambda, N) > 0$ be defined as
\allowdisplaybreaks
    \begin{equation}\label{Theta-lambda}
        \begin{aligned}
            \Theta(\lambda, N) &\triangleq \big[ (16N-12) L^{2}_{1} + (256N-192)\sqrt{2\pi} L^{2}_{0} + (16N-12)c_{\epsilon}(L^{y}_{0})^{2} \\
            &\qquad + (16N-16)M^{2}_{\theta} \big] \times \max \left\{ \tfrac{1}{2} - \log{\lambda}, \tfrac{1}{4} + \tfrac{(1-\lambda)^{2}}{\lambda(3-\lambda)} \right\} > 0.
        \end{aligned}
    \end{equation}
    By combining the above bounds, we have
    \begin{align*}
        \sum_{k=\lceil \lambda K \rceil}^{K} \mathbb{E}[ \| e^{k}_{1} + e^{k}_{2} + e^{k}_{3} + e^{k}_{4} \|^{2} \!\mid\! \mathcal{F}_{k} ] \leq \Theta(\lambda, N) \: n^{2-h(a, b, e)}.
    \end{align*}
\end{proof}

\subsection{Proof of Lemma \ref{QG-network-game}}\label{proof-QG-network-game}
\begin{proof}
    By computation, we obtain that for any $i\in [N]$,
    \begin{equation*}
        F_i(x) = \nabla_{x_{i}} f_{i}(x_{i}, x_{-i}) = \left( \mathbb{E}\left[\tfrac{M_{G}}{\|x^{\ell}\|_{2}(b^{\ell}(\bxi)-\|x^{\ell}\|_{2})^{2}} - \beta(\bxi)\right]x^{\ell}_{i} \right)_{\ell\in \mathcal{L}}.
    \end{equation*}
    For any $\ell \in \mathcal{L}$, we define $F^{\ell}(x^{\ell})$ as
    \begin{equation}\label{proof-QG-network-game-eqn1}
        F^{\ell}(x^{\ell}) \triangleq \left( \mathbb{E}\left[\tfrac{M_{G}}{\|x^{\ell}\|_{2}(b^{\ell}(\bxi)-\|x^{\ell}\|_{2})^{2}} - \beta(\bxi)\right]x^{\ell}_{i} \right)_{i=1}^{N} = \mathbb{E}\left[\tfrac{M_{G}}{\|x^{\ell}\|_{2}(b^{\ell}(\bxi)-\|x^{\ell}\|_{2})^{2}} - \beta(\bxi)\right] x^{\ell}.
    \end{equation}
    Then we can write $F(x) = (F_{i}(x))_{i=1}^{N}$ or $F(x) = (F^{\ell}(x^{\ell}))_{\ell \in \mathcal{L}}$. We claim that for each $\ell \in \mathcal{L}$, the mapping $F^{\ell}$ satisfies the \ref{SP} property on $X^{\ell}$, where $X^{\ell}$ denotes the feasible region of $x^\ell$. Indeed, by our assumption, we have that
    \begin{equation}\label{proof-QG-network-game-eqn2}
        \min_{x^{\ell}\in X^{\ell}} \left( \mathbb{E}\left[\tfrac{M_{G}}{\|x^{\ell}\|_{2}(b^{\ell}(\bxi)-\|x^{\ell}\|_{2})^{2}} - \beta(\bxi)]\right] \right) \geq \left(  \tfrac{M_{G}}{d^{\ell}_{\max} (b_{\rm max}^{\ell}-d_{\min}^{\ell})^{2}} - \mathbb{E}[\beta(\bxi)] \right) \triangleq \eta^{\ell} > 0.
    \end{equation}
    By the \ref{SP} definition, we first assume that $F^{\ell}(x^{\ell})^{\top}(y^{\ell}-x^{\ell}) \geq 0$ for any $x^{\ell}, y^{\ell} \in X^{\ell}$. Then we may deduce that
    \begin{align*}
        0 \leq F^{\ell}(x^{\ell})^{\top}(y^{\ell}-x^{\ell}) \overset{\eqref{proof-QG-network-game-eqn1}}{=} \mathbb{E}\left[\tfrac{M_{G}}{\|x^{\ell}\|_{2}(b^{\ell}(\bxi)-\|x^{\ell}\|_{2})^{2}} - \beta(\bxi)\right] \langle x^{\ell}, y^{\ell} - x^{\ell} \rangle.
    \end{align*}
    Then by \eqref{proof-QG-network-game-eqn2}, it follows that $\langle x^{\ell}, y^{\ell} - x^{\ell} \rangle \geq 0$. Therefore, we have that
    \allowdisplaybreaks
    \begin{align*}
        & F^{\ell}(y^{\ell})^{\top}(y^{\ell}-x^{\ell}) = \mathbb{E}\left[\tfrac{M_{G}}{\|y^{\ell}\|_{2}(b^{\ell}(\bxi)-\|y^{\ell}\|_{2})^{2}} - \beta(\bxi)\right] \langle y^{\ell}, y^{\ell} - x^{\ell} \rangle \\
        &\geq \mathbb{E} \left[ \tfrac{M_{G}}{\|y^{\ell}\|_{2}(b^{\ell}(\bxi)-\|y^{\ell}\|_{2})^{2}} - \beta(\bxi) \right] \langle y^{\ell}, y^{\ell} - x^{\ell} \rangle \\
        &- \underbrace{ \mathbb{E} \left[ \tfrac{M_{G}}{\|y^{\ell}\|_{2}(b^{\ell}(\bxi)-\|y^{\ell}\|_{2})^{2}} - \beta(\bxi) \right] \langle x^{\ell}, y^{\ell} - x^{\ell} \rangle }_{\geq 0} \\
        &= \mathbb{E} \left[ \tfrac{M_{G}}{\|y^{\ell}\|_{2}(b^{\ell}(\bxi)-\|y^{\ell}\|_{2})^{2}} - \beta(\bxi) \right] \langle y^{\ell} - x^{\ell}, y^{\ell} - x^{\ell} \rangle \overset{\eqref{proof-QG-network-game-eqn2}}{\geq} \eta^{\ell} \| y^{\ell} - x^{\ell} \|^{2},
    \end{align*}
    implying that $F^{\ell}$ satisfies the \ref{SP} property on $X^{\ell}$ with $\eta^{\ell} > 0$. By Theorem \ref{QNE-existence}, we know that $\mathrm{SOL}\:(X, F)$ is nonempty. For any $x^{\ast} \in X^{\ast}$, we have
    \begin{equation}\label{proof-QG-network-game-eqn3}
        0 \leq (x - x^{\ast})^{\top} F(x^{\ast}) = \sum_{\ell \in \mathcal{L}} (x^{\ell} - x^{\ast, \ell})^{\top} F^{\ell}(x^{\ast, \ell}), ~ \forall x\in X.
    \end{equation}
    Since \eqref{proof-QG-network-game-eqn3} holds for any $x\in X$, we have that $(x^{\ell}-x^{\ast, \ell})^{\top} F^{\ell}(x^{\ast, \ell})\geq 0$ holds for any $x^{\ell} \in X^{\ell}$ and any $\ell\in \mathcal{L}$. By the \ref{SP} property for $F^{\ell}$, $(x^{\ell}-x^{\ast, \ell})^{\top} F^{\ell}(x^{\ell})\geq {\eta^{\ell}} \|x^{\ell}-x^{\ast, \ell}\|^{2}$ holds for any $x^{\ell} \in X^{\ell}$ and any $\ell\in \mathcal{L}$. Therefore, we have that
    \begin{equation*}
        (x - x^{\ast})^{\top} F(x) = \sum_{\ell \in \mathcal{L}} (x^{\ell} - x^{\ast, \ell})^{\top} F^{\ell}(x^{\ell}) \geq \eta \sum_{\ell \in \mathcal{L}} \| x^{\ell} - x^{\ast, \ell} \|^{2} = \eta \| x - x^{\ast} \|^{2}, ~ \forall x\in X,
    \end{equation*}
    where $\eta \triangleq \min_{\ell \in \mathcal{L}} \eta^{\ell} > 0$. Therefore, the map $F$ satisfies the \ref{GSM} property with $\eta > 0$. As discussed in subsection \ref{Sec-2.2}, \ref{GSM} implicitly implies that $X^{\ast}$ is a singleton, and hence the \ref{QG} property holds. Then we complete the proof.
\end{proof}

\bibliographystyle{siamplain}
\bibliography{references}
\end{document}